\newcommand{\op}{\mathrm{op}}
\newcommand{\cat}{\textup{\textbf{Cat}}_\infty}
\newcommand{\id}{\textup{id}}
\newcommand{\C}{\mathcal{C}}
\newcommand{\D}{\mathcal{D}}
\newcommand{\F}{\mathcal{F}}
\newcommand{\Fun}{\textup{Fun}}
\newcommand{\lax}{\textup{lax}}
\newcommand{\cl}{\textup{cl}}
\newcommand{\e}{\epsilon}
\renewcommand{\rightarrowtail}{\tikz\draw[>->] (0,0) -- (10pt,0);}
\newcommand{\rrightarrow}{\mathrel{\mathrlap{\rightarrow}\mkern1mu\rightarrow}}
\newcommand{\lleftarrow}{\mathrel{\mathrlap{\leftarrow}\mkern1mu\leftarrow}}
\newcommand{\closedrightarrow}{\mathrel{\mathrlap{\rightarrowtail}\mkern1mu\rightarrow}}
\renewcommand{\twoheadrightarrow}{\rrightarrow}
\renewcommand{\twoheadleftarrow}{\lleftarrow}
\renewcommand{\xtwoheadrightarrow}[2][]{%
  \xrightarrow[#1]{#2}\mathrel{\mkern-14mu}\rightarrow
}
\renewcommand{\xtwoheadleftarrow}[2][]{%
  \leftarrow\mathrel{\mkern-14mu}\xleftarrow[#1]{#2}
}
\numberwithin{equation}{subsection}
\theoremstyle{definition}
\newtheorem{defn}[equation]{Definition}
\newtheorem{example}[equation]{Example}
\newtheorem{rmk}[equation]{Remark}
\newtheorem{nota}[equation]{Notation}
\theoremstyle{plain}
\newtheorem{thm}[equation]{Theorem}
\newtheorem{lem}[equation]{Lemma}
\newtheorem{prop}[equation]{Proposition}
\newtheorem{cor}[equation]{Corollary}
\newtheorem{thmx}{Theorem}
\newtheorem{propx}[thmx]{Proposition}
\newcommand{\var}{\textup{\textbf{Var}}}
\newcommand{\comp}{\textup{\textbf{Comp}}}
\newcommand{\smcomp}{\textup{\textbf{SmComp}}}
\renewcommand{\span}{\textup{\textbf{Span}}}
\newcommand{\sh}{\textup{Sh}}
\newcommand{\hsh}{\textup{HSh}}
\renewcommand{\hom}{\textup{Hom}}
\newcommand{\spectra}{\mathbf{\mathcal{S}}\textup{\textbf{pectra}}}
\newcommand{\set}{\textup{Set}}
\newcommand{\psh}{\textup{PSh}}
\renewcommand{\S}{\mathcal{S}}
\newcommand{\corr}{\textup{\textbf{Corr}}}
\newcommand{\fun}{\textup{Fun}}
\newcommand{\fin}{\textup{\textbf{Fin}}}
\renewcommand{\part}{\textup{part}}
\newcommand{\A}{\mathcal{A}}
\newcommand{\B}{\mathcal{B}}
\newcommand{\ex}{\textup{ex}}
\newcommand{\prst}{\textup{\textbf{PrSt}}_\infty}
\newcommand{\Prst}{\textup{\textbf{PrSt}}_\infty}
\newcommand{\PrSt}{\textup{\textbf{PrSt}}_\infty}
\newcommand{\Prcat}{\textup{\textbf{Pr}}_\infty}
\newcommand{\alg}{\textup{Alg}}
\newcommand{\Alg}{\textup{Alg}}
\newcommand{\loc}{\textup{loc}}
\newcommand{\E}{\mathcal{E}}
\renewcommand{\textbf}{}
\date{}
\begin{document}

\title{An axiomatization of six-functor formalisms}
\author{Josefien Kuijper}
\maketitle

\begin{abstract}
In this paper, we consider two variations on Mann's definition $\infty$-categorical definition of abstract six-functor formalisms. We consider Nagata six-functor formalisms, that have the additional requirement of having Grothendieck and Wirthm\"uller contexts. We also consider local six-functor formalisms, which in addition to this, take values in presentable stable $\infty$-categories, and have recollements. Using Nagata's compactification theorem, we show that Nagata six-functor formalism on varieties can be given by just specifying adjoint triples for open immersions and for proper morphisms, satisfying certain compatibilities. The existence of recollements is (almost) equivalent to a hypersheaf condition for a Grothendieck topology on the category of ``varieties and spans consisting of an open immersion and a proper map''. Using this characterization, we show that the category of local six-functor formalisms embeds faithfully into the category of lax symmetric monoidal functors from the category of smooth and complete varieties to the category of presentable stable $\infty$-categories and adjoint triples.  We characterize which lax symmetric monoidal functors on complete varieties, taking values in the category of presentable stable $\infty$-categories and adjoint triples, extend to local six-functor formalisms.    
\end{abstract}

\tableofcontents
 
\section{Introduction}
A six-functor formalism encapsulates the structure that is often present in sheaf theories in algebraic geometry, which underlie cohomology theories. Roughly speaking, a six-functor formalism assigns to every scheme (or variety/topological space/...) $X$ a closed symmetric monoidal category\footnote{We use the word category to refer both 1-categories and $\infty$-categories. We specify which one of the two is meant, when it is relevant.} $D(X)$. This gives, for $A$ in $D(X)$, an adjunction
\begin{center}
\begin{tikzcd}
D(X) \arrow[r, "A\otimes -", shift left] & D(X). \arrow[l, "{\textup{hom}(A,-)}", shift left]
\end{tikzcd}
\end{center}
 A morphism of schemes (varieties/topological spaces/...) $f:X \to Y$  should give rise to two adjunctions $f^*\dashv f_*$ and $f_!\dashv f^!$. Collectively, this structure is also known as \textit{Grothendieck's six operations}. 
 
 Six-functor formalisms that occur in nature often look like ``the derived category of (some adjective) sheaves on $X$''. A classical example is the assignment of the bounded derived category $D^b_{\textup{constr}}(X)$ of constructible sheaves on a scheme $X$, where the six operations are given by the internal hom and tensor product of sheaves, the direct and inverse image $f_*$ and $f^*$, and the exceptional direct and inverse image $f_!$ and $f^!$ (sometimes, the latter two functors are only defined for a certain subclass of morphisms).  Six-functor formalisms have, for example, been constructed and studied for manifolds and topological spaces in \cite{sheaves_on_manifolds} and \cite{volpe}, for motivic homotopy theory in \cite{ayoub_i}, \cite{ayoub_ii}, for \'etale cohomology of Artin stacks in \cite{liu_zheng_six_ops}, for equivariant motivic homotopy theory in \cite{hoyois}, for rigid analytic motives over general rigid analytic spaces in \cite{ayoub_gallauer_vezzani} and for Morel-Voevodsky's stable $\mathbb{A}^1$-homotopy theory in \cite{drew_gallauer}.
 
 Six-functor formalisms that arise from a sheaf theory, generally satisfy additional properties, such as base change and the projection formula, which indicate relations between the functors $f_!$, $f^*$ and $\otimes$. Another common feature is the existence of \textit{Grothendieck and Wirthm\"uller contexts}. This means that for some class $P$ of morphisms, there exists a natural equivalence $f_*\cong f_!$ (a {Grothendieck context}), and for another class $I$, a natural equivalence $f^!\cong f^*$ ({a Wirthm\"uller context}). As the notation suggest, the morphisms in $P$ are often proper maps and those in $I$ open immersions. In that case, for $i:U\to X$ an open immersion and $j:X\setminus U \to X$ its closed complement, there is a diagram
 \begin{equation}\label{eq:recollement_intro}
\begin{tikzcd}[sep=large]
D(U\setminus X) \arrow[r, "j_!\cong j_*" description] & D(X) \arrow[l, "j^ !", shift left=2] \arrow[l, "j^ *"', shift right=2] \arrow[r, "i^*\cong i^!" description] & D(U) \arrow[l, "i_!"', shift right=2] \arrow[l, "i_*", shift left=2]
\end{tikzcd}.
\end{equation}
Such a diagram is called a \textit{recollement} if it satisfies a list of properties, including that $j^* i_*=0$ and that $i_!$, $j_!$ and $j_*$ are embeddings. If for every open immersion $i:U\to X$ the associated diagram of this shape is a recollement, then we say that the six-functor formalism $D$ satisfies \textit{localization}.

In this article, we consider two variations on the definition of six-functor formalisms, as defined by Mann in \cite{mann_thesis}. There, abstract six-functor formalisms are defined on an arbitrary \textup{geometric setup}, which consists of a category $\C$, and a class of morphisms $E$ in $\C$ for which not only the direct and inverse image, but also the exceptional direct and inverse image are defined. We first consider \textit{Nagata six-functor formalisms}, which are six-functor formalisms that have Grothendieck and Wirthmüller contexts for classes of morphisms $P$ and $I$. Moreover, the geometric setup and the classes $P$ and $I$ are required to form a \textit{Nagata setup}, meaning that every morphism in $\C$ factors as a morphism in $I$ followed by a morphism in $P$. We then show that a Nagata six-functor formalism is uniquely determined by its restriction to morphisms in $I$ and $P$. More precisely, the category of Nagata six-functor formalisms on a fixed Nagata setup is equivalent to a subcategory of functors out of the category of ``correspondences consisting of a $P$- and an $I$-morphism''.

Second, we consider \textit{local six-functor formalisms}, which are defined on a \textit{noetherian Nagata setup}, of which the category of varieties is the motivating example. Local six-functor formalisms are Nagata six-functor formalisms that take values in presentable stable $\infty$-categories, and have recollements. We show that local six-functor formalisms are uniquely determined by their restriction to a class of ``complete objects'', which reduce to complete varieties in the noetherian Nagata setup of varieties. Moreover, the category of local six-functor formalisms on varieties over a field of characteristic zero embeds into a category of functors out of the category of smooth varieties.

\subsection{Various $\infty$-categorical approaches to six-functor formalisms}
In recent years, there have been several $\infty$-categorical axiomatizations of the notion of a six-functor formalism. A convenient way to wrap up all of the structure present in a six-functor formalism, and to capture some of the aforementioned properties, is by defining a six-functor formalism on a category with products $\mathcal{C}$ to be a lax symmetric monoidal functor 
\begin{equation}\label{eq:intro_6ff}
    D:\textup{\textbf{Corr}}(\mathcal{C}) \to \textup{\textbf{Cat}}_\infty.
\end{equation}
such that for every $X$ in $\mathcal{C}$, $D(X)$ is a closed symmetric monoidal $\infty$-category (where the symmetric monoidal structure is induced by the diagonal $\Delta:X\to X\times X$ and $D$ being a lax symmetric monoidal functor) and such that for every morphism $f$ in $\textup{Corr}(\mathcal{C})$, $D(f)$ is a left adjoint. In practice, the category $\mathcal{C}$ is some ($\infty$-)category of geometric objects, such as varieties, (derived) schemes, stacks or sufficiently nice topological spaces. The category $\textup{Corr}(\C)$ is the category of correspondences in $\C$. The objects of $\corr(\C)$ are objects in $\C$, and the morphisms are spans
$$X \xlongleftarrow{f}  Y \xlongrightarrow{g} Z $$
see also e.g. \cite[Definition A.5.2]{mann_thesis}.
We denote the image of such a span under $ D$ by
$$D(X)\xlongrightarrow{f^*}D(Y) \xlongrightarrow{g_!} D(Z),$$
and we denote the right adjoint of $f^*$ by $f_*$ and the right adjoint of $g_!$ by $g^!$.
Composition in $\textup{Corr}(\C)$ is given by pullbacks. Base change then follows from functoriality, and the projection formula comes from $D$ being a lax symmetric monoidal functor. This idea is attributed to Lurie and worked out in \cite{gaitsgory}, in order to develop a theory of ind-coherent sheaves on schemes and stacks. An $(\infty,2)$-categorical approach is developed by Gaitsgory and Rozenblyum in \cite{gaitsgory_rozenblyum}. An alternative approach, avoiding $(\infty,1)$-categories, is taken by Liu and Zheng in \cite{liu_zheng_six_ops}, where they construct a six-functor formalism encoding étale cohomology of Artin stacks. In \cite{Hormann}, H\"ormann uses the theory of (op)fibrations of 2-multicategories to define abstract six-functor-formalisms. In his thesis \cite{mann_thesis}, Mann defines the category of abstract six-functor formalisms as lax symmetric monoidal functors out of the $(\infty,1)$-category of correspondences in a \textit{geometric setup}, which is a category that has the minimal structure that is needed structure to admit a notion of six-functor formalism. Grothendieck and Wirthm\"uller contexts are not part of his definition, but \cite[Proposition A.5.10]{mann_thesis} shows how six-functor formalisms that have them, can be constructed from just a lax symmetric monoidal functor
$$D^*:\C^\op \to \textup{\textbf{Cat}}_\infty$$
satisfying a number of conditions, involving the existence of a left adjoint $i_!$ and a right adjoint $i_*$ of $D(i)=i^*$ for $i$ in $I$, and for $p$ in $P$ a right adjoint $p_*$ of $D(p)=p^*$, and a further right adjoint $p^!$ of $p_*$. In a recent preprint \cite{khan}, Khan axiomatizes six-functor formalisms on derived schemes, named ``weaves'', with Grothendieck contexts for proper morphisms, Wirthm\"uller contexts for \'etale morphisms, and \textit{Poincar\'e duality} for smooth morphisms. The latter means that for $f$ smooth, there is a natural isomorphism $f^! \simeq  f^!(\mathbbm{1}_Y)\otimes f^*(-)$. Drew and Gallauer have a similar approach in \cite{drew_gallauer} using ``coefficient systems'', that in addition to Grothendieck and Wirthm\"uller contexts and Poincar\'e duality, also have recollements and satisfy $\mathbb{A}^1$-homotopy invariance. 

\subsection{Motivation and main results}

In this text we build upon Mann's $(\infty,1)$-categorical definition of six-functor formalisms, and axiomatize what structure is needed in a geometric setup, to allow for the existence of Grothendieck and Wirthm\"uller contexts, and to allow for a notion of localization. To this end, we consider six-functor formalisms on a \textit{Nagata setup}. This is an $\infty$-category $\C$ with fiber products and terminal object, and two sufficiently nice classes of morphisms $I$ and $P$, such that every morphism in $\C$ can be factorized as a morphism in $I$ followed by a morphism in $P$. An important example of a Nagata setup is the category of qcqs schemes over a base scheme $S$ and separated, finite type morphism, with $I$ and $P$ open immersions and proper morphisms respectively. Given such a setup, we define a category $\textup{\textbf{6FF}}$ of \textit{Nagata six-functor formalisms}, which are six-functor formalisms such that for $i$ in $I$, the functors form an adjoint triple  $i_!\dashv (i^!\cong i^*) \dashv i_*$, and for $p$ in $P$, the functors form an adjoint triple $p^* \dashv (p_*\cong p_! )\dashv p^!$.

\begin{rmk}
    
In a separate paper, joint with Adam Dauser \cite{conjecture}, we show that this notion of six-functor formalism coincides with lax symmetric monoidal functors
$$D:\corr(\C)\to \cat $$
such that morphisms in $P$ are cohomologically proper and morphisms in $I$ are cohomologically étale in the sense of \cite[Lecture VI]{scholze_notes}.
\end{rmk}

One motivation for this definition is that a six-functor formalism as described above, is an extremely over-determined object. For example, the six operations come in pairs of adjoints, and therefore instead of giving all of them explicitly, giving only half of them suffices. 
Indeed, in existing $(\infty,1)$-categorical frameworks, the data of a six-functor formalism is a lax symmetric monoidal functor (\ref{eq:intro_6ff}), encoding only the left adjoints $f^*$, $f_!$ and $A\otimes-$ explicitly; this already boils down the amount of data significantly. Now for a Nagata six-functor formalism, if $f$ is in $I$ or $P$, then the four functors $f^*, f_*, f_!$ and $f^!$ form an adjoint triple, so in theory giving only one of the four should suffice. This is particularly effective since $P$ and $I$ form a factorization system of all morphisms in $\C$.

 One of the axioms of a Nagata setup is that both $I$ and $P$ are stable under pullback by arbitrary morphisms. Therefore we can define the following $\infty$-category.

\begin{defn}\label{defn:span}
We denote by $\corr(\C,P,I) \subseteq \corr(\C)$ the subcategory spanned by all objects in $\C$, and by morphisms of the form
$$X\xleftarrow{p} U \xrightarrow{i} Y  $$
with $i\in I$ and $p\in P$.  
\end{defn}

We call a functor of $\infty$-categories a left-left adjoint if it is a left adjoint, and its right adjoint is also a left adjoint; in other words if it occurs as leftmost functor in an adjoint triple. Using methods from \cite{mann_thesis} and \cite{liu_zheng_six_ops} we can show the following, showing that a Nagata six-functor formalism is indeed determined just by its restriction to $I$- and $P$-morphisms. 
\begin{propx}[Proposition \ref{prop:w6FF}]\label{propx:intro0}
    The $\infty$-category $ \textup{\textbf{6FF}}$ is equivalent to the $\infty$-category of lax symmetric monoidal functors 
$$F:\corr(\C,P,I) \to \cat$$
such that \begin{itemize}
    \item[(i)]for $X$ in $\C$, $F(X)$ with the induced symmetric monoidal structure is a closed symmetric monoidal $\infty$-category,
    \item[(ii)] for any morphism $f$ in $\corr(\C,P,I)$, $F(f)$ is a left-left adjoint,
    \item[(iii)] and $F$ sends certain types of squares to adjoinable squares (the \textit{BC property}, see (B) in Definition \ref{defn:bc_span}).
\end{itemize}
and of symmetric natural transformations that are compatible with the right adjoints of the functors $F(f)$. 
\end{propx}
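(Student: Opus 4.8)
The plan is to prove this equivalence by constructing a restriction functor and showing it is an equivalence, using the fact that the geometric data in a Nagata setup is concentrated on morphisms of the shape $X \xtwoheadleftarrow{p} U \xhookrightarrow{i} Y$. The key observation is that $\delta^*_+\C(P^\op,I)$ contains enough of $\corr(\C)$ to pin down a six-functor formalism: since every morphism in $\C$ factors as a morphism in $I$ followed by one in $P$, every span in $\corr(\C)$ can be decomposed, up to the equivalences furnished by the Grothendieck and Wirthm\"uller contexts, into pieces living in $\delta^*_+\C(P^\op,I)$. So the restriction functor $\mathbf{6FF} \to \Fun^{\lax}(\delta^*_+\C(P^\op,I), \cat)$ lands in the subcategory cut out by conditions (i)--(iii), and the content is that this restriction is fully faithful and essentially surjective onto that subcategory.

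First I would set up the restriction functor precisely. Given a Nagata six-functor formalism $D:\corr(\C)\to\cat$, restrict along the inclusion $\delta^*_+\C(P^\op,I)\hookrightarrow\corr(\C)$ of Definition \ref{defn:span}. I would then verify that the restriction satisfies (i)--(iii): condition (i) is immediate since the closed symmetric monoidal structure on each $D(X)$ is already part of the data of a Nagata six-functor formalism; condition (ii) follows because for $p\in P$ the Grothendieck context gives $p_*\cong p_!$, so $p^*$ sits in the adjoint triple $p^*\dashv p_*\dashv p^!$, and dually for $i\in I$ the Wirthm\"uller context gives $i^!\cong i^*$, so $i_!\dashv i^*\dashv i_*$, and a general morphism $X\xtwoheadleftarrow{p}U\xhookrightarrow{i}Y$ is sent to $i_!\circ p^*$, a composite of left-left adjoints hence left-left adjoint; condition (iii) is the base-change property for the relevant squares, which is inherited from functoriality of $D$ on $\corr(\C)$ together with the adjointability packaged in the Grothendieck and Wirthm\"uller contexts.

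For essential surjectivity and full faithfulness, I would invoke the gluing/extension machinery of Liu--Zheng \cite{liu_zheng_gluing}, \cite{liu_zheng_six_ops} together with Mann's construction \cite{mann_thesis}, as the proposition statement explicitly signals. The strategy is to build, from a lax symmetric monoidal $F$ on $\delta^*_+\C(P^\op,I)$ satisfying (i)--(iii), an extension to all of $\corr(\C)$. The mechanism is the same that Mann uses in \cite[Proposition A.5.10]{mann_thesis}: one uses the $I$/$P$ factorisation to define the value of the extension on an arbitrary span by choosing a factorisation and applying $F$ to the resulting left-left adjoints, and the BC property (iii) is exactly what guarantees this is well-defined and functorial independent of the chosen factorisation. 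The multisimplicial nerve techniques recalled in Section \ref{sect:prel} let one assemble these choices coherently into an honest functor of $\infty$-categories rather than merely a collection of objects and morphisms, and full faithfulness on morphism spaces follows because a symmetric natural transformation compatible with the right adjoints is determined by its values on the generating morphisms in $\delta^*_+\C(P^\op,I)$.

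The main obstacle I expect is the coherence of the extension: showing that the assignment defined via $I$/$P$ factorisations genuinely extends to a lax symmetric monoidal functor on the \emph{full} correspondence category $\corr(\C)$, with all the higher coherences controlled, rather than just defining the right values on objects and $1$-morphisms. This is where the BC property (iii) must be leveraged most carefully, since base change is precisely the compatibility datum that allows two different factorisations of the same correspondence to be identified coherently, and it is where the Liu--Zheng formalism of encoding functoriality through multisimplicial sets does the real work. A secondary subtlety is tracking that the right adjoints $f_*$ and $f^!$---which are not literally part of the lax monoidal functor $F$ but are determined by the left-left adjoint condition (ii)---are matched up correctly under the equivalence, so that the morphisms in $\mathbf{6FF}$ (symmetric natural transformations compatible with right adjoints) correspond exactly to the natural transformations on the right-hand side; this requires an Eilenberg--Watts/adjunction-transfer argument that passing to right adjoints is functorial and reflects equivalences.
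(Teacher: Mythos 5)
Your proposal is correct and follows essentially the same route as the paper: the paper also defines the equivalence as restriction along $\delta^*_+\C^{\op,\sqcup,\op}(P_\times^\op,I_-)\hookrightarrow\corr(\C)^{\op,\sqcup,\op}$, first for three-functor formalisms (Proposition \ref{prop:3FF}) and then restricted to $\mathbf{6FF}$ via Proposition \ref{prop:cohom_prop_etale}, and builds the inverse exactly with the Liu--Zheng multisimplicial machinery you invoke — concretely as a chain of three equivalences (decomposing $\corr(\C)^{\op,\sqcup,\op}$ into the nerve $\delta^*\C^{\op,\sqcup,\op}(P_\times^\op,I_-^\op,I_-,P_-)$, applying the Functorial Partial Adjoint of Lemma \ref{lem:functorial_PA}, and recomposing into the span category). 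The two subtleties you flag — coherence of the factorisation-wise extension and functoriality of passing to right adjoints — are precisely what those cited theorems and the Functorial Partial Adjoint lemma handle in the paper.
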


\begin{rmk}
In a sense, Proposition \ref{propx:intro0} looks like a dual the conjecture stated in \cite[page 47]{scholze_notes}, and \cite[Theorem 2.51]{khan}. Scholze conjectures that Mann's construction \cite[Proposition A.5.10]{mann_thesis} induces an equivalence between the $\infty$-category of ``three-functor formalisms'' with Grothendieck and Wirthm\"uller contexts, and the $\infty$-category of functors $D^*:\C^\op \to \cat$ that encode the data of $f^*$ for all morphisms and satisfy certain conditions that involve the existence of a left adjoint adjoint $i_\natural\vdash i^*$ for $i$ in $I$ and a right adjoint $p^*\vdash p_*$ for $p$ in $P$ (this can be refined to a statement about six-functor formalisms). In joint work with Adam Dauser \cite{conjecture}, we prove this conjecture with the methods also used in the current paper. This also implies a proof of \cite[Theorem 2.51]{khan} that avoids the use of $(\infty,2)$-categories. 
\end{rmk}

Then we specialize to the setting of particularly well-behaved Nagata setups, which we call \textit{noetherian Nagata setups}. The archetypal example is the Nagata setup $(\var,I,P)$ of varieties \footnote{reduced, separated, finite type schemes over a field} over a field $k$, with $I$ the open immersions and $P$ the proper morphisms. For the rest of this introduction, we assume that this is the noetherian Nagata setup we are working with. Let us consider the following composable morphisms in $\corr(\var,P,I)$:
\begin{equation}\label{eq:localization}
U \xleftarrow{=} U \xrightarrow{i} X \xleftarrow{j} X\setminus U \xrightarrow{=} X \setminus U   
\end{equation}
We define a subcategory 
$$ \textup{\textbf{6FF}}^{\loc} \subseteq  \textup{\textbf{6FF}}$$
of \textit{local six-functor formalisms}, which are six-functor formalisms $D$ such that $D(X)$ is a presentable stable $\infty$-category for all $X$, and such that each sequence (\ref{eq:localization}) is sent to a recollement (\ref{eq:recollement_intro}).
\begin{propx}[Proposition \ref{prop:6ff_to_span}] \label{propx:intro1}
 The equivalence in Proposition \ref{propx:intro0}  restricts to an equivalence between $\textup{\textbf{6FF}}^{\loc}$ and the $\infty$-category of lax symmetric monoidal functors 
$$ F:\corr(\var,P,I)\to \PrSt$$
that take values in presentable stable $\infty$-categories and left-left adjoints, have the BC-property, and that in addition send sequences (\ref{eq:recollement_intro}) to recollements, abstract blowups to pullbacks, and satisfy $F(\emptyset)=\mathbbm{1}$; and of symmetric monoidal natural transformations that are compatible with the right adjoints of the functors $F(f)$.
\end{propx}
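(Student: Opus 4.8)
The plan is to deduce the statement from Proposition~\ref{propx:intro0} by checking that the equivalence recorded there restricts to the two subcategories, so that the real content is to match their defining conditions. Since $\mathbf{6FF}^{\loc}$ is the full subcategory of $\mathbf{6FF}$ on those $D$ with every $D(X)$ presentable stable and every localization sequence \eqref{eq:localization} sent to a recollement, and since the equivalence of Proposition~\ref{propx:intro0} sends $D$ to a lax symmetric monoidal $F$ with $F(X)=D(X)$ on objects (realizing the six functors and their right adjoints on morphisms), two of the target conditions transport over verbatim: ``$F(X)$ presentable stable for all $X$'' is the first defining condition of $\mathbf{6FF}^{\loc}$ — and, combined with condition~(ii), it forces $F$ to land in $\PrSt$, since a left--left adjoint between presentable stable categories is in particular colimit-preserving — while ``$F$ sends \eqref{eq:recollement_intro} to recollements'' is the second. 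All of these cut out full subcategories on the level of objects (the morphisms on both sides being the same symmetric monoidal transformations compatible with right adjoints already recorded in Proposition~\ref{propx:intro0}). Hence it suffices to show that, for $F$ corresponding to some $D\in\mathbf{6FF}^{\loc}$, the two remaining target conditions $F(\emptyset)\simeq\mathbb{1}$ and ``abstract blowups go to pullbacks'' hold \emph{automatically}; conversely any $F$ in the target subcategory satisfies in particular the recollement condition, so lies in the image, and the two subcategories then correspond.

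The vanishing $F(\emptyset)\simeq\mathbb{1}$ (the zero category) comes from a degenerate instance of \eqref{eq:localization}: taking $U=X$ for $X=\operatorname{spec}k$, the open immersion is $\id_X$ and the closed complement is $\emptyset$, so the recollement exhibits $D(\emptyset)$ as the fibre (equivalently cofibre) of an identity functor on $D(X)$, i.e. the zero category. For the abstract-blowup condition, consider a blowup square with $\iota\colon Z\to X$ a closed immersion, $p\colon\tilde X\to X$ proper and an isomorphism over $U:=X\setminus Z$, and let $q\colon E\to Z$, $\tilde\iota\colon E\to\tilde X$ be the base changes of $\iota$, $p$ along $p$, $\iota$. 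Every map here lies in $P$, so in $\delta^*_+\var(P^\op,I)$ it occurs as a span $\cdot\xtwoheadleftarrow{}\cdot\xhookrightarrow{=}\cdot$ to which $F$ assigns the corresponding $*$-pullback; the image of the blowup square is therefore
\[
\begin{tikzcd}
D(X) \arrow[r,"p^*"] \arrow[d,"\iota^*"'] & D(\tilde X) \arrow[d,"\tilde\iota^*"] \\
D(Z) \arrow[r,"q^*"'] & D(E)
\end{tikzcd}
\]
and I must show it is a pullback in $\PrSt$, i.e. $D(X)\simeq D(Z)\times_{D(E)}D(\tilde X)$.

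The two relevant instances of the recollement hypothesis — for $U\hookrightarrow X$ and for $\tilde U:=p^{-1}(U)\hookrightarrow\tilde X$, with closed complements $Z$ and $E$ — present $D(X)$ and $D(\tilde X)$ as gluings of their open and closed parts. Concretely I would use that a stable recollement is equivalent to the data of its open part, its closed part, and a gluing functor, recovering $D(X)$ as the oriented fibre product of $\Phi_X:=\iota^*j_*\colon D(U)\to D(Z)$ (objects $(A,B,\ B\to\Phi_X A)$) and $D(\tilde X)$ as that of $\Phi_{\tilde X}:=\tilde\iota^*\tilde j_*\colon D(\tilde U)\to D(E)$, where $j,\tilde j$ are the open immersions. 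Since $p$ is an isomorphism over $U$, base change identifies $D(U)\simeq D(\tilde U)$, and the proper base change supplied by the Grothendieck context for $p$ gives $q_*\Phi_{\tilde X}\simeq\Phi_X$, via $q_*\tilde\iota^*\simeq\iota^*p_*$ together with $p_*\tilde j_*\simeq j_*$. The adjunction $q^*\dashv q_*$ then identifies the gluing data $\{B\to\Phi_X A\}$ defining $D(X)$ with the data $\{q^*B\to\Phi_{\tilde X}A\}$ defining $D(Z)\times_{D(E)}D(\tilde X)$, which is exactly the claim that the blowup square is cartesian.

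I expect the main obstacle to be precisely this descent step. One must (i) make the ``recollement $=$ gluing data'' description precise and functorial in the $\PrSt$-valued setting used here; (ii) verify that the isomorphism $q_*\tilde\iota^*\simeq\iota^*p_*$ is genuinely the one encoded by the Grothendieck context and the BC property~(iii), rather than merely a formal mate; and (iii) upgrade the object-level matching of gluing data to an equivalence of $\infty$-categories. A purely formal fibre-sequence manipulation will not suffice: a map of localization sequences of presentable stable categories that is an equivalence on kernels has vanishing total fibre, but in $\PrSt$ this alone does not force the square to be cartesian, so the gluing description (equivalently, essential use of both adjoints in each recollement) is genuinely needed. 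Once the descent is in hand, the two subcategories coincide, and the compatibility with the lax symmetric monoidal structure and with the morphisms of both categories is routine given Proposition~\ref{propx:intro0}.
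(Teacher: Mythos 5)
Your proposal is correct in outline and reaches the statement, but it organizes the proof in the opposite direction from the paper on one point and uses a genuinely different argument on another. First, the paper's actual definition of $\mathbf{6FF}^{\loc}$ (Definition~\ref{defn:6ff_unstable}) is not ``presentable stable values $+$ recollements'' but ``presentable stable values, $D(\emptyset)=\mathbb{1}$, and localization squares sent to pullbacks in $\cat$''; recollements are then \emph{derived} (Lemma~\ref{lem:fiber_wordt_recollement}, using that $j^*j_\circ\simeq\id$ follows from composition in the span category, together with \cite[Lemma A.2.5]{nine}). You take recollements as the definition and derive $D(\emptyset)\simeq\mathbb{1}$ from the degenerate sequence with $U=X$; that argument is correct (a recollement is in particular a Verdier sequence, so $D(\emptyset)$ is the fibre of an identity), and since the two packages of conditions are equivalent this is a harmless reorganization. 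Second, for abstract blowup descent the paper (Lemma~\ref{lem:proper_cdh_descent}) cites the standard argument of \cite[Proposition 6.24]{HOYOIS2017197}, i.e.\ Lurie's cartesianness criterion \cite[Proposition 5.17]{Dagvii}: joint conservativity of $i^*,p^*$ plus base change and recollements. Your route via the ``recollement $=$ gluing data'' description --- writing $D(X)$ as the oriented fibre product along $\iota^*j_*$ and moving the gluing functor across $q^*\dashv q_*$ using proper base change --- is a legitimate, more self-contained alternative (essentially Milnor patching), but it requires precisely the coherence work you flag in (i)--(iii), which the citation route sidesteps; your remark that matching fibres alone cannot force cartesianness in $\PrSt$ correctly identifies why some such input is unavoidable.

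Two places are glossed more than ``routine'' justifies. The target category in the paper is phrased as lax symmetric monoidal functors into $\PrSt$ with its Lurie tensor product, so beyond the unary functors $F(f)$ being colimit-preserving (which your left-left-adjoint remark covers), the lax structure maps $F(X)\times F(Y)\to F(X\times Y)$ must preserve colimits separately in each variable. This is a real step, handled in the paper by Lemma~\ref{lem:lift} and Lemma~\ref{lem:lift_applied} using that $D(X\times Y)$ is closed symmetric monoidal and that the projections' pullbacks are left adjoints; your sketch does not address it. Also, per the paper's definitions neither side is a \emph{full} subcategory: morphisms are restricted to be componentwise colimit-preserving on both sides (Definition~\ref{defn:6ff_unstable} and Definition~\ref{defn:bc_span_sheaf}); since the same restriction appears on both sides it transports verbatim, so this does not damage your argument, but the ``full subcategories on the level of objects'' claim should be corrected. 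Finally, note the paper states the target condition as a $\tau^c_{A\cup L}$-hypersheaf condition and invokes \cite[Propositions 4.7 and 6.8]{kuij_descent} to translate it into ``localization and abstract blowup squares go to pullbacks''; relative to your formulation this is only a cosmetic difference.
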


The condition that $ F:\corr(\var,P,I) \to \PrSt$ sends sequences (\ref{eq:localization}) to recollements and abstract blowups to pullbacks, is equivalent to the condition that $F$ is a hypersheaf for a certain topology on $\corr(\var,P,I)^\op$, and moreover satisfies $F(\emptyset)=\mathbbm{1}$. 

We can describe local six-functor formalisms even more efficiently than by Proposition \ref{propx:intro1}. For every variety $U$, there is a complete variety $X$ with an open immersion $U\hookrightarrow X$, and it follows that the complement $X\setminus U$ is also complete. In the recollement (\ref{eq:recollement_intro}), the object $D(U)$ is uniquely determined by $D(U\setminus X)$, $D(X)$ and the adjoint triple $j^*\vdash( j_*\cong j_!)\vdash j^!$. Therefore, at least morally, a six-functor formalism with recollements should be completely determined by its restriction to complete varieties. We make this precise as follows. Let $\comp$ be the category of complete varieties. Note that $\comp^\op$ is a subcategory of $\corr(\var,P,I)$.
\begin{thmx}[Theorem \ref{thm:cs6ff}]\label{thmx:intro}
   Restricting to $\comp^\op$ induces an equivalence between $\textup{\textbf{6FF}}^{\loc}$ and the $\infty$-category of lax symmetric monoidal functors
$$ F:\comp^\op \to \PrSt$$
that take values in presentable stable $\infty$-categories and left-left adjoints, that satisfy a BC-property (see  (2) in definition \ref{defn:bc_comp}), that send abstract blowups to pullbacks and that satisfy $F(\emptyset) = \mathbbm{1}$; and of symmetric monoidal natural transformations that are compatible with the right adjoints of the functors $F(f)$. 
\end{thmx}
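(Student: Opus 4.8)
The plan is to deduce the theorem from Proposition \ref{propx:intro1} by reconstructing a local six-functor formalism from its values on complete varieties, the geometric engine being Nagata's compactification theorem. By Proposition \ref{propx:intro1} it suffices to show that restriction along the inclusion $\comp^\op\hookrightarrow\delta^*_+\var(P^\op,I)$ is an equivalence from the $\infty$-category $\A$ of lax symmetric monoidal functors $F\colon\delta^*_+\var(P^\op,I)\to\PrSt$ that have the BC-property, send the localization sequences (\ref{eq:localization}) to recollements and abstract blowups to pullbacks, and satisfy $F(\emptyset)=\mathbb{1}$, onto the target $\infty$-category $\B$ of the theorem. First I would check that the restriction functor $\rho\colon\A\to\B$ actually lands in $\B$: the empty scheme is complete, an abstract blowup of complete varieties is a diagram entirely inside $\comp$, and the BC-property of Definition \ref{defn:bc_comp} is the restriction to $\comp^\op$ of the BC-property on the larger category, so every defining condition of $\B$ is inherited. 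This step is routine.

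The substance is the construction of an inverse, which is forced by the recollement condition. For a variety $U$, Nagata's theorem supplies an open immersion $j\colon U\hookrightarrow\bar U$ into a complete variety with closed complement $i\colon Z\hookrightarrow\bar U$, and $Z$ with its reduced structure is again complete. Applying any $F\in\A$ to the localization sequence
$$ U \xtwoheadleftarrow{=} U \xhookrightarrow{j} \bar U \xtwoheadleftarrow{i} Z \xhookrightarrow{=} Z $$
and using that its image is a recollement identifies $F(U)$ with the Verdier quotient of $F(\bar U)$ by the essential image of the fully faithful functor $i_*\colon F(Z)\to F(\bar U)$, formed in $\PrSt$. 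This identity shows that $\rho$ determines $F$ on objects, and it tells us how to extend a functor $G\in\B$: set $\hat G(U)$ to be this quotient, again a presentable stable $\infty$-category.

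To turn $\hat G$ into a genuine lax symmetric monoidal functor I would first establish independence of the chosen compactification. Any two compactifications of $U$ are dominated by a third, for instance the closure of the image of $U$ in their product, by a pair of proper maps that are isomorphisms over $U$; these are abstract blowups with centers inside the boundaries $Z_i$, and feeding them into the abstract-blowup-to-pullback condition identifies the resulting quotients compatibly. Rather than organise such choices by hand, the cleanest route is the hypersheaf reformulation recalled before the statement: one recognises $\A$ as the $\infty$-category of normalized hypersheaves for the topology on $\delta^*_+\var(P^\op,I)^\op$ generated by abstract blowups and localization sequences, and $\B$ as the analogous category of abstract-blowup hypersheaves on $\comp^\op$. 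The extension of $G$, its functoriality, its monoidal structure (products of complete varieties are complete, and products of compactifications compactify products), and the compatibility with right adjoints are then produced uniformly by a hypercomplete comparison-of-sites argument, whose only geometric input is that every variety occurs as the open complement of a complete variety inside a complete variety, so that $\comp^\op$ together with the localization covers generates the whole site.

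The main obstacle is precisely this passage from the pointwise reconstruction $\hat G(U)\simeq G(\bar U)/G(Z)$ to a coherent functor on all of $\delta^*_+\var(P^\op,I)$ at the $\infty$-categorical level, compatibly with the lax monoidal structure and the right adjoints; the independence-of-compactification lemma, proved through the abstract-blowup condition, is the technical key on which the comparison-of-sites argument rests. Once $\hat G\in\A$ is in hand, the recollement condition holds by construction while the blowup and normalization conditions are inherited from $G$, and the two composites $\rho\hat G\simeq G$ and $\widehat{\rho F}\simeq F$ follow from the reconstruction identity, yielding the desired equivalence.
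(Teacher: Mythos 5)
Your skeleton is in fact the same as the paper's: reduce via Proposition \ref{prop:6ff_to_span} to functors on $\span^\op=\delta^*_+\var(P^\op,I)$, then compare $\span$ with $\comp$ through the lax symmetric monoidal hypersheaf comparison (the paper's Proposition \ref{prop:cohom}, built on the Day-convolution localization of Section \ref{subsect:monoidal_compact_support}, which is needed precisely because neither $\span$ nor $\comp$ is cartesian). The gap is at the decisive step. You claim one can ``recognise $\A$ as the $\infty$-category of normalized hypersheaves \dots and $\B$ as the analogous category of abstract-blowup hypersheaves on $\comp^\op$'' and then let a comparison-of-sites argument produce everything. That identification is false: by Definitions \ref{defn:alg_span_BC} and \ref{defn:bc_comp}, $\A$ and $\B$ are proper subcategories of those hypersheaf categories, cut out by conditions that are not descent conditions at all --- the functor must send proper maps (and, on the $\span$ side, also open immersions) to left-left adjoints, must send the relevant types of pullback squares to vertically right-adjointable squares, and morphisms must be compatible with the right adjoints. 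The comparison equivalence $\alg_{\span^\op}^{\tau^c_{A\cup L}}(\PrSt)_\emptyset\simeq\alg_{\comp^\op}^{\tau_{AC}}(\PrSt)$ hands you an extension $\hat G$ of $G\in\B$ as a lax symmetric monoidal hypersheaf, but by itself says nothing about whether $\hat G$ satisfies these adjoint-theoretic conditions on all of $\span$, nor whether extended natural transformations remain compatible with right adjoints.

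Closing this gap is the bulk of the paper's proof of Proposition \ref{prop:cs6ff}. Lemma \ref{lem:cs6ff_step0} and Lemma \ref{lem:cs6ff_step_0.5} prove exactly the missing statement: if a normalized hypersheaf $G$ on $\span$ restricts on $\comp$ to an object of $\alg_{\comp^\op}^{BC,AC}(\PrSt)$, then $G$ sends all $P$- and $I$-morphisms to left-left adjoints and all squares of types $(I_-,I_-)$, $(I_-,P_\times^\op)$, $(P_-^\op,P_\times^\op)$, $(P_-^\op,I_-)$ to vertically right-adjointable squares; a separate argument in the proof of Proposition \ref{prop:cs6ff} handles natural transformations. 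These proofs are not formal consequences of descent: one chooses Nagata compactifications so that the given morphism sits in a pullback diagram of complete varieties (via the density lemma \cite[Lemma 6.15]{kuij_descent}), and then transports adjunction data along the resulting recollements using \cite[Lemma A.2.5]{nine}, the Parshall--Scott-type Lemma \ref{lem:parshall_scott}, and the cube Lemmas \ref{lem:cube} and \ref{lem:another_cube}. Your independence-of-compactification observation only controls the values $\hat G(U)\simeq G(\bar U)/G(Z)$; it does not yield any of these adjointability statements. So, as written, your proposal proves an equivalence of hypersheaf categories but not the theorem; the recollement-algebra propagation of the BC conditions from $\comp$ to $\span$ must be supplied at the point where you invoke the ``uniform'' site comparison.
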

In other words, a local six-functor formalism is uniquely determined by the $(-)^*$-part restricted to complete varieties. Moreover, we can characterize exactly which functors $\comp^\op \to \PrSt$ can be uniquely extended to a fully-fledged six-functor formalism. In a similar vain to the remark after Proposition \ref{propx:intro1}, the condition that $ F:\comp^\op \to \PrSt$ sends abstract blowups to pullback squares and satisfies $F(\emptyset) = \mathbbm{1}$, is equivalent to the condition that $F$ is a (hyper)sheaf for a certain topology on $\comp$. \\

Lastly, under the additional assumption that our base field $k$ has characteristic zero, using resolution of singularities we can show the following.
\begin{thmx}[Theorem \ref{thm:6FF}]\label{thmx:intro2}
 Over a field of characteristic zero, restricting to smooth and complete varieties induces a fully faithful embedding of $\textup{\textbf{6FF}}^\loc$   
into the $\infty$-category of $\infty$-operad maps 
$$F: \smcomp^\op  \to \PrSt^\otimes$$
that satisfy a BC condition (see (3) in Remark \ref{rmk:smcomp}) and that satisfy descent for blowups and $F(\emptyset)=\mathbbm{1}$; and of symmetric monoidal natural transformations that are compatible with the right adjoints of the functors $F(f)$. 
\end{thmx}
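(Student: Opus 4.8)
The plan is to bootstrap from Theorem \ref{thmx:intro}, which already identifies $\mathbf{6FF}^\loc$ with the $\infty$-category of lax symmetric monoidal functors $F\colon\comp^\op\to\PrSt$ landing in left-left adjoints, satisfying the BC condition and descent for blowups, and with $F(\emptyset)=\mathbb{1}$. Since $\smcomp$ is a full subcategory of $\comp$ that is closed under finite products (products of smooth complete varieties are smooth and complete), restriction along $\smcomp\hookrightarrow\comp$ carries a lax symmetric monoidal functor to an $\infty$-operad map $\smcomp^\op\to\PrSt^\otimes$, and it visibly preserves the BC condition, blowup descent, and the value on $\emptyset$. The embedding in the statement is nothing but the composite of the equivalence of Theorem \ref{thmx:intro} with this restriction. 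Hence it suffices to prove that restriction along $\smcomp\hookrightarrow\comp$ is fully faithful on the subcategory of functors described in Theorem \ref{thmx:intro}.

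The geometric engine, and the only place characteristic zero is used, is Hironaka's resolution of singularities. Given a complete variety $X$, choose a proper birational morphism $\pi\colon\tilde X\to X$ with $\tilde X$ smooth and complete that is an isomorphism over the smooth locus $X\setminus\sing(X)$. Setting $Z=\sing(X)$ with its reduced structure and $E=\pi^{-1}(Z)$, one obtains an abstract blowup square
\begin{center}
\begin{tikzcd}
E \arrow[r] \arrow[d] & \tilde X \arrow[d, "\pi"] \\
Z \arrow[r] & X
\end{tikzcd}
\end{center}
in which $\tilde X$ is smooth and complete while $Z$ and $E$ are complete of strictly smaller dimension. Blowup descent for $F$ exhibits $F(X)$ as the pullback of $F(\tilde X)$ and $F(Z)$ over $F(E)$ in $\PrSt$. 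Proceeding by Noetherian induction on $\dim X$ (with smooth varieties, in particular those of dimension $0$, as the base case), every value $F(X)$ is canonically reconstructed as an iterated finite limit of values $F(W)$ with $W$ smooth and complete. In particular two such hypersheaves agreeing on $\smcomp$ agree on all of $\comp$, and the reconstruction is manifestly functorial in $F$.

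The same inductive reconstruction applies to the remaining structure. Morphisms in $\comp^\op$, and the laxness data of the external product realising the projection formula, are determined by their restrictions to the reconstructing smooth diagrams, because abstract blowup squares are functorial and a product of two resolution squares resolves the product of varieties; thus the operad map structure descends from $\smcomp$. Compatibility with right adjoints is inherited because in $\PrSt$ passage to right adjoints is compatible with the limits computing the reconstruction. For full faithfulness on mapping spaces I would argue that a symmetric monoidal natural transformation compatible with right adjoints, once specified on $\smcomp$, is determined on each $F(X)$ by the universal property of the reconstructing limit, and conversely extends uniquely by those same limits; this yields an equivalence of mapping spaces. I would claim only full faithfulness, not essential surjectivity, because a functor on $\smcomp$ satisfying the stated conditions need not extend to a genuine hypersheaf on all of $\comp$: the limits produced by the induction must still be shown to satisfy descent against \emph{arbitrary} abstract blowups in $\comp$, including blowups of singular varieties and with non-smooth centers, which is not automatic.

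The hard part will be the independence and assembly statement: showing that the iterated limits attached to different resolutions of the same $X$ are canonically identified, so that they glue to a well-defined value rather than a choice-dependent local computation. The cleanest way to organise this is to prove that the smooth-blowup covers generate the same topology on $\comp$ as the abstract blowup covers, i.e.\ that $\smcomp$ is a dense subsite. Beyond Hironaka, this would invoke the weak factorisation theorem to connect any two smooth resolutions of $X$ through a chain of blowups and blowdowns with smooth centers, guaranteeing that the descent data are independent of the chosen resolution. A secondary obstacle is coherence of the reconstruction with the symmetric monoidal structure at the level of operad maps into $\PrSt^\otimes$, not merely on underlying objects; this again follows from functoriality of resolution under smooth morphisms and closure of $\smcomp$ under products, but it must be tracked coherently throughout the induction.
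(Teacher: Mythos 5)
Your overall skeleton does match the paper's: start from Theorem \ref{thmx:intro} (Theorem \ref{thm:cs6ff}), and reduce everything to comparing lax symmetric monoidal hypersheaves on $\comp$ with those on $\smcomp$, with resolution of singularities as the only input from characteristic zero. You also correctly converge on the right organizing principle: the clean statement is that restriction along $\smcomp\hookrightarrow\comp$ induces an equivalence of hypersheaf categories (a comparison-lemma/dense-subsite statement), not an object-by-object reconstruction. This is exactly how the paper proceeds: Proposition \ref{prop:cohom} gives $\alg_{\comp^\op}^{\tau_{AC}}(\PrSt)\simeq \alg_{\smcomp^\op}^{\tau_{B}}(\PrSt)$, with the underlying site-level comparison imported from \cite{kuij_descent}, and Theorem \ref{thm:6FF} is then the composite $\mathbf{6FF}^\loc \simeq \alg^{BC,AC}_{\comp^\op}(\PrSt) \hookrightarrow \alg^{\tau_{AC}}_{\comp^\op}(\PrSt) \simeq \alg^{\tau_B}_{\smcomp^\op}(\PrSt)$, where the middle step forgets the adjointability conditions. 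In particular, once the comparison lemma is in place, no explicit Noetherian induction over resolutions and no weak factorization theorem are needed; the independence-of-resolution problem you flag as "the hard part" simply does not arise in this formulation.

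There are, however, two genuine gaps. The first is monoidal coherence. The statement is about lax symmetric monoidal functors, and an equivalence of underlying hypersheaf categories does not formally upgrade to one on categories of lax symmetric monoidal hypersheaves; your remark that this "follows from functoriality of resolution under products\dots tracked coherently" is precisely where the work sits. The paper devotes Section \ref{subsect:monoidal_compact_support} to this point: because the relevant monoidal structures are not cartesian, one cannot argue objectwise; instead one shows that hypersheafification is a symmetric monoidal localization for the Day convolution product (Lemma \ref{lem:pre_comparison}, which needs $\otimes$-stability of the cd-structures) and then passes to commutative algebra objects (Corollary \ref{cor:comparison_D}). Without an argument of this type, even faithfulness of restriction on the lax symmetric monoidal level is not established.

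The second gap concerns right adjoints. Your justification that "passage to right adjoints is compatible with the limits computing the reconstruction" is not a valid principle: propagating adjointability along recollements is delicate and is exactly what Lemmas \ref{lem:parshall_scott}, \ref{lem:cube} and \ref{lem:another_cube} are engineered for in the analogous $\span$-versus-$\comp$ comparison (Proposition \ref{prop:cs6ff}). This matters because fullness of the embedding into the category whose morphisms are right-adjoint-compatible transformations requires the implication "compatible with right adjoints over $\smcomp$ implies compatible over $\comp$", which your proposal asserts but does not prove; note that the paper's body, for this very reason, only claims a \emph{faithful} embedding into $\alg^{\tau_B}_{\smcomp^\op}(\PrSt)$ in Theorem \ref{thm:6FF}. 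Finally, your stated reason for renouncing essential surjectivity is misplaced: by the comparison equivalence, every $\tau_B$-hypersheaf on $\smcomp$ does extend uniquely to a $\tau_{AC}$-hypersheaf on $\comp$; the genuine obstruction, per Remark \ref{rmk:smcomp}, is that the BC adjointability conditions, checked on the few pullback squares that exist in $\smcomp$, need not hold for the extension on the many pullback squares present in $\comp$.
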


Again, sending blowups to pullbacks and $F(\emptyset)=\mathbbm{1}$ is equivalent to a hypersheaf condition for a certain topology on $\smcomp$. This corollary shows that a local six-functor formalism on varieties is, in fact, uniquely determined by the restriction of the $(-)^*$-part on smooth and complete varieties. However, we do not have a good characterization of which lax symmetric monoidal functors $\smcomp^\op \to \PrSt$ uniquely extend to six-functor formalisms.\\

There is an interesting analogy between Theorem \ref{thmx:intro} and Theorem \ref{thmx:intro2}, and a result about ``abstract compactly supported cohomology theories'' in our previous work \cite{kuij_descent}. There, the notation $\span$ is introduced for the category $\corr(\C,I,P)=\corr(\C,P,I)^\op$ . For $F:\span^\op \to \spectra$ a functor, we denote $H^n_c(X)=\pi_{-n}(F(X))$. If $F$ sends sequences (\ref{eq:localization}) to fiber sequences, then these groups satisfy the long exact sequence
 $$\dots \to H^n_c(U)\to H^n_c(X) \to H^n_c(X\setminus U) \to H^{n+1}_c(U)\to \dots $$
 and can therefore reasonably be called a \textit{compactly supported cohomology theory}. This condition on $F$ is the same hypersheaf condition that is mentioned after Proposition \ref{propx:intro1}. We can consider the $\infty$-category $\hsh(\span;\spectra)_\emptyset$, of hypersheaves that in addition satisfy $F(\emptyset)=*$,  as the $\infty$-category of ``abstract compactly supported cohomology theories''. Local six-functor formalisms can be seen as a more higher, more structured analogue of these. 
 
 In \cite{kuij_descent}, we showed that for $\D$ a complete, cocomplete and pointed $\infty$-category, restriction along the inclusion $\comp \hookrightarrow \span$ induces an equivalence between categories of hypersheaves
 \begin{equation}\label{eq:intro1}
      \hsh_{\tau^c_{A\cup L}}(\span; \D)_\emptyset \simeq \hsh_{\tau_{AC}}(\comp;\D).
 \end{equation}
Over a field of characteristic zero, restricting further to the category of smooth and complete varieties $\smcomp$, induces an equivalence between $\infty$-categories of hypersheaves
\begin{equation}\label{eq:intro2}
     \hsh_{\tau_{AC}}(\comp;\D) \simeq \hsh_{\tau_B}(\smcomp;\D)
 \end{equation}
The Grothendieck topologies on $\span$, $\comp$ and $\smcomp$ are those that occur in Proposition \ref{propx:intro1}, Theorem \ref{thmx:intro} and Theorem \ref{thmx:intro2}. One can interpret (\ref{eq:intro1}) and (\ref{eq:intro2}) as saying that abstract compactly supported cohomology theories are determined by their restriction to $\smcomp$, and they characterize which functors on $\comp$, and even which functors on $\smcomp$, uniquely extend to compactly supported cohomology theories. Theorem \ref{thmx:intro} and Theorem \ref{thmx:intro2} can be seen as the higher analogues of these results. In fact, we derive them from a variation on (\ref{eq:intro1}) and (\ref{eq:intro2}); we show that these can be improved to equivalences of categories of lax symmetric monoidal functors whose underlying functor is a hypersheaf.

\subsection{Linear overview}
The outline of the paper is as follows.

\begin{itemize}
    \item Section \ref{sect:prel} is structured as follows.
    \begin{itemize}
        \item In Subsections \ref{subsect:prereq1} and \ref{subsect:prereq2} we recall terminology around symmetric monoidal $\infty$-categories, multisimplicial sets, and the tensor product of presentable $\infty$-categories.
        \item In Subsection \ref{subsect:prereq_recollements} we recall terminology around adjoinable squares and recollements. We also prove some technical lemmas about adjoinable squares, which are used heavily in proving Theorem \ref{thmx:intro}.
        \item In Subsection \ref{subsect:prereq_passing_to_adjoints} we prove a categorified version of Liu and Zheng's ``Partial Adjoint'' lemma, which is instrumental in proving Proposition \ref{propx:intro0}.
    \end{itemize}
    \item Section \ref{sect:reducingtospan} is structured as follows.
    \begin{itemize}
        \item In Subsections \ref{subsect:nagata_setup_and_6ff} and \ref{subsect:nagata_6ff} we recall Mann's definition of six-functor formalisms, and define Nagata setups and Nagata six-functor formalisms.
        \item In Subsection \ref{subsect:proper_open} we prove Proposition \ref{propx:intro0}.   \end{itemize}
    \item Section \ref{sect:reducing_to_comp} is structured as follows. \begin{itemize}
        \item  In Subsections \ref{subsect:noeth_nagata_setup} and \ref{subsect:span_and_comp} we define the notion of a noetherian Nagata setup and show that in such a setup, the equivalence (\ref{eq:intro1}) holds. These subsections can be skipped if one is only interested in local six-functor formalisms on the category of varieties, or reduced, separated, finite type schemes over a noetherian base scheme.
        \item   In Subsection \ref{subsect:local_6ff} we define local six-functor formalisms on a noetherian Nagata setup, and prove Proposition \ref{propx:intro1}. 
        \item  In Subsection \ref{subsect:monoidal_compact_support} we show that variants of the equivalences (\ref{eq:intro1}) and (\ref{eq:intro2}) for categories of lax symmetric monoidal hypersheaves hold. 
        \item In Subsection \ref{subsect:comp} we prove Theorem \ref{thmx:intro} and Theorem \ref{thmx:intro2}. 
    \end{itemize} 
\end{itemize}

\subsection{Acknowledgements}
I want to thank Dan Petersen for sharing with me the idea that one should be able to recover a six-functor formalism from a functor defined on $\span$, and also for many comments on earlier versions of this manuscript. I also thank Adam Dauser, Elden Elmanto, D\'enis-Charles Cisinski, Adeel Khan and Lucas Mann for comments on earlier versions of this manuscript. I thank Fabian Hebestreit for helpful discussions, and for making me think twice about invoking \textit{presentable} stable $\infty$-categories. Lastly, I thank the anonymous referee for many comments that greatly improved the readability of this article.

\section{Preliminaries}\label{sect:prel}
In this section we discuss some preliminaries. Throughout, $\infty$-categories are as in \cite[Definiton 1.1.2.4]{htt}, also known as quasicategories. For $X$ a simplicial set and $\C$ an $\infty$-category, the mapping space $\textup{Map}(X,\C)$ is an $\infty$-category (\cite[Proposition 1.2.7.3]{htt}), and we denote it by $\textup{Fun}(X,\C)$.

\subsection{Symmetric monoidal $\infty$-categories and multisimplicial nerves}\label{subsect:prereq1}

We introduce the following notation. Note that we denote symmetric monoidal $\infty$-categories as cocartesian fibrations over the category of finite sets and partial maps $\fin_\part$ rather than the isomorphic category of pointed finite sets and pointed maps, as is done in \cite{HA}.

\begin{nota}
\label{nota:Cotimes} 
    For $\C$ an $\infty$-category, let $\C^\sqcup \to \fin_\part$ denote the cocartesian fibration from \cite[Construction 2.4.3.1]{HA}. 
\end{nota}
An edge $f:(X_i)_I\to (Y_j)_J$ in $((\C^\op)^\sqcup)^\op$, which we from now on denote by $\C^{\op,\sqcup,\op}$, consists of a map $\alpha:J \to I$ in $\fin_\part$, and for every $j\in J$ with $\alpha(j)=i$ a map $X_i\to Y_j$. If $\C$ has products, then this is equivalent to a map $X_i\to \prod_{j\in \alpha^{-1}(i)} Y_j$ for every $i\in I$. Then the cocartesian fibration
$$\C^{\op, \sqcup} \to \fin_\part$$
classifies the symmetric monoidal structure on $\C^\op$ given by the product in $\C$ (hence the coproduct in $\C^\op$). 
\begin{nota}\label{nota:C_times}
    For $(\C, \otimes)$ a symmetric monoidal 1-category, $\C^\op$ is also a symmetric monoidal 1-category, which is classified by a cocartesian fibration
    $$ (\C^\op)^\otimes \to \fin_\part.$$
    We denote $((\C^\op)^{\otimes})^\op$ by $\C_\otimes$.
\end{nota}
If $(\C,\times)$ is a cartesian symmetric monoidal 1-category, then by definition, $\C_\times$ coincides with $\C^{\op,\sqcup,\op}$.

\begin{nota}\label{nota:morphisms} Let $\C$ be an $\infty$-category, and $A$ a set of morphisms in $\C$.
\begin{itemize} 
\item[(1)] We denote by $A^\op$ the corresponding set of morphisms in $\C^\op$.
    \item[(2)]  We denote by $A_\sqcup$ the set of morphisms in $\C^{\op,\sqcup,\op} $ consisting of all
$$(X_i)_I \to (Y_j)_J $$
lying over $\alpha:J\dashrightarrow I$ such that for all $i \in I$ and $j\in J$ with $\alpha(j)=i$, the map $X_i \to Y_j$ is in $A$. 
\item[(3)] We denote by $A_-$ the subset of $A_\sqcup$ consisting of only morphisms over identities in $\textup{Fin}^\part$.
\item[(4)] If $\C$ has products, then we denote by $A_\times$ the set of morphisms 
$$(X_i)_I \to (Y_j)_J $$
lying over $\alpha:J\dashrightarrow I$, such that for all $i \in I$ the map $X_i\to \prod_{j\in \alpha^{-1}(i)} Y_j$ is in $A$. 
\end{itemize}
\end{nota}

We recall here only the most essential terminology around multisimplicial sets from \cite[Section 1.3]{liu_zheng_six_ops} and refer the reader to loc. cit. for more details.

\begin{defn}[{\cite[Definition 1.3.1]{liu_zheng_six_ops}}]
    We denote by $\set_\Delta$ the category of simplicial sets. For $I$ a finite set, we denote by $\set_{I\Delta}$ the category of \textit{I-simplicial sets}, which is the functor category
    $$\fun(\fun(I,\Delta)^\op,\set).$$
    For $I = \{1,\dots,k\}$, we denote $\set_{I\Delta}$ as $\set_{k\Delta}$ and call its objects \textit{k-simplicial sets}.
\end{defn}
For $I$ a finite set, there is a natural inclusion $\delta_I:\Delta\to \fun(I,\Delta).$
\begin{defn}
   We denote by $$\delta_I^*:\set_{I\Delta}\to \set_\Delta$$
    the functor induced by precomposition with $\delta_I$. We call this the \textit{diagonal functor}. When $I = \{1,\dots,n\}$, we denote $\delta_I^*$ by $\delta_n^*$.
\end{defn}
For the following definition, we use notation that is not used in \cite{liu_zheng_six_ops}. Note however that what we call cartesian multisimplicial nerves, can be obtained as a special case of \cite[Definition 1.3.14]{liu_zheng_six_ops} (c.f. Definition 1.3.16 of loc. cit.) and the functor $\op^{I}_J$ introduced on page 30 of loc. cit.
\begin{defn}\label{defn:multisimplicial_nerve} 
Let $\C$ be an $\infty$-category and $I_1,\dots, I_k$ sets of morphisms in $\C$, and $\epsilon_1,\dots,\epsilon_k$ such that each $\epsilon_i$ is either the superscript ``$\op$'' or the empty superscript. Then we denote by  $\C(I_1^{\epsilon_1},\dots,I_k^{\epsilon_k})$ the $k$-simplicial set whose $(n_1,\dots, n_k)$-simplices are functors
$$(\Delta^{n_1})^{\epsilon_1} \times \dots \times (\Delta^{n_k})^{\epsilon_k} \to \C,$$
or equivalently, $n_1$-by-$\dots$-by-$n_k$ $k$-dimensional hypercubes in $\C$ for which the edges in the $i$-th direction are in $I_i^{\epsilon_i}$, such that the 1-dimensional faces are cartesian squares in $\C$. We call such a $k$-simplicial set a \textit{cartesian multisimplicial nerve}.
\end{defn}
For example, let be $\C$ an $\infty$-category and $I_1,I_2$ sets of morphisms in $\C$. Then the 2-simplicial set $\C(I_1^\op,I_2)$ has as 1-simplices squares
\begin{center}
    \begin{tikzcd}
        X_{00} \arrow[d] & X_{10}\arrow[d] \arrow[l]\\
        X_{01}& X_{11}\arrow[l]
    \end{tikzcd}
\end{center}
such that the horizontal morphisms are in $I_1$, the vertical morphisms are in $I_2$, and $X_{10}$ is the pullback of the cospan
$$X_{00}\to X_{01} \leftarrow X_{11}.$$
The faces of such a 1-simplex are the 0-simplices $X_{00}$ and $X_{11}$.

We single out the following ``simple edges" in simplicial sets of the form $\delta_k^*\C(I_1^{\e_1},\dots,I_k^{\e_1})$ for an $\infty$-category $\C$ with sets of edges and superscripts as in Definition \ref{defn:multisimplicial_nerve}. We observe that a 1-simplex of $\delta_k^*\C(I_1^{\e_1},\dots,I_k^{\e_k})$ is a $k$-dimensional hypercube in $\C$ where the faces are cartesian squares and the edges in direction $i$ are maps in $I_i^{\e_i}$ for $i=1,\dots, k$. 
\begin{defn}\label{defn:simple_edge}
A 1-simplex of the simplicial set $\delta_k^*\C(I_1^{\e_1},\dots,I_k^{\e_k})$ is called $i$-\textit{simple} if the maps in all directions except direction $i$ are identities. Therefore such an edge can be identified with a morphism in $I_i^{\e_i}$. A 1-simplex is called \textit{simple} if it is $i$-simple for some $i$.\end{defn}

Let $C(\Delta^n)\subseteq (\Delta^n)^\op \times \Delta^n$ be the full subcategory on objects $(i,j)$ with $i\geq j$; in other words, as a graph this 1-category looks like the above-diagonal part of an $(n\times n)$-grid. For example, $C(\Delta^2)$ looks like
\begin{center}
   
\begin{tikzcd}
    (0,0) &  \arrow[l] (1,0)\arrow[d]\\
    & (1,1) 
\end{tikzcd} 
\end{center}

\begin{defn}\label{defn:diagonal_plus}
Let $\C$ be an $\infty$-category and $I_1$ and $I_2$ sets of morphisms in $\C$. We denote by 
$$\corr(\C,I_1,I_2)$$  the \textit{simplicial set of correspondences} whose $n$-simplices are given by functors
$$C(\Delta^n) \to \C $$
such that horizontal edges are sent to morphisms in $I_1$, vertical edges are sent to morphisms in $I_2$, and such that the little squares are pullback squares.  
\end{defn}
\begin{defn}\label{defn:simple_edge_in_corr_cat}
    A 1-simplex in $\corr(\C,I_1,I_2)$ is called $1$-simple if it is of the form 
    \begin{center}
        \begin{tikzcd}
    X &  \arrow[l, "f"] Y\arrow[d, "="]\\
    & Y 
\end{tikzcd}
    \end{center}
    for $f$ a morphism in $I_1$, and 2-simple if it is of the form 
\begin{center}
        \begin{tikzcd}
    X &  \arrow[l, "="'] X\arrow[d, "g"]\\
    & Y 
\end{tikzcd}
    \end{center}
    for $g$ a morphism in $I_2$. 
\end{defn}
The simplicial set $\corr(\C,I_1,I_2)$ is also denoted by $\C_{\textup{corr:}I_1,I_2}$ in e.g. \cite{gaitsgory_rozenblyum} and \cite{liu_zheng_six_ops}.
If $I_1$ and $I_2$ are both closed under composition, and stable under pullbacks along arbitrary morphisms, then $\corr(\C,I_1,I_2)$ is an $\infty$-category by \cite[Lemma 6.1.2]{liu_zheng_six_ops}. Moreover, there is a natural map of simplicial sets
$$\delta^*_2\C(I_1^\op, I_2) \to \corr(\C,I_1,I_2)$$
that, on 1-simplices, remembers the upper diagonal half of a square. By \cite[Theorem 1.4.26]{liu_zheng_six_ops}, this map is a categorical equivalence (see also Example 1.4.29 in loc. cit.)
\begin{nota}\label{nota:corr}
    As a special case of Definition \ref{defn:diagonal_plus}, for $\C$ an $\infty$-category, we denote $\corr(\C,All,All)$ by $\corr(\C)$. For $E$ a set of morphisms in $\C$ that is closed under composition and pullback along arbitrary morphisms, we denote $\corr(\C,E,All)$ by $\corr(\C,E)$.
\end{nota}

If $\C$ has products, then $\corr(\C^{\op,\sqcup,\op},E_-)$ is a symmetric monoidal $\infty$-category, and its underlying $\infty$-category is $\corr(\C,E)$, by \cite[Lemma 6.1.3]{liu_zheng_six_ops}. More generally, this holds for $\corr(\C^{\op,\sqcup,\op},I_{1,\times},I_{2,-})$ if the subcategory of $\C$ spanned by $I_1$-morphisms admits finite products. This justifies the following notation.
\begin{nota}
    Let $\C$ an $\infty$-category and $I_1$, $I_2$ sets of morphisms that are closed under composition and stable under pullbacks along arbitrary morphisms, and such that the subcategory of $\C$ spanned by $I_1$-morphisms admits finite products. 
    \begin{enumerate}[label=(\arabic*)]
        \item  We denote the symmetric monoidal $\infty$-category $\corr(\C^{\op,\sqcup,\op},I_{1,\times},I_{2,-})$  by $\corr(\C,I_1,I_2)^\otimes$.
        \item  If $\C$ admits all products and $E$ is closed under composition and pullback along arbitrary morphisms, then we denote the symmetric monoidal $\infty$-category $\corr(\C^{\op,\sqcup,\op},All_\times,E_-)$ by $\corr(\C,E)^\otimes$.
        \item   If $\C$ admits all products then we denote the symmetric monoidal $\infty$-category $\corr(\C^{\op, \sqcup,\op})(All_\times, All_-)$ by $\corr(\C)^\otimes$.
    \end{enumerate}
  
\end{nota}

\begin{defn}\label{defn:square}
    For $X$ an arbitrary simplicial set, by a \textit{commutative square} in $X$ we mean a map of simplicial set $\Delta^1\times \Delta^1 \to X$. Via the two natural inclusions
$$\Delta^1 \cong \Delta^1\times \Delta^0 \to \Delta^1\times \Delta^1 $$
and the two natural inclusions
$$\Delta^1 \cong \Delta^0\times \Delta^1 \to \Delta^1\times \Delta^1 $$
induced by the two inclusions of $\Delta^0$ in $\Delta^1$, a square in $X$ determines four 1-simplices in $X$.
\end{defn}

\subsection{The Lurie tensor product}\label{subsect:prereq2}
We denote by $\cat^\otimes \to \fin_\part$ the cocartesian fibration that classifies the cartesian symmetric monoidal structure on the $\infty$-category of $\infty$-categories $\cat$. We recall the following definition from \cite[Section 4.1]{DAGII}.
\begin{defn}
    We define $\Prcat^\otimes \subseteq\cat^\otimes$ to be the subcategory spanned by 
    \begin{enumerate}
        \item[(i)] tuples $(X_i)_I$ where each $X_i$ is a presentable $\infty$-category, 
        \item[(ii)] and morphisms $(X_i)_I \to (Y_j)_J$ over $\alpha:I \dashrightarrow J$ in $\fin_\part$, such that each induced functor 
        $$\prod_{\alpha^{-1}(j)} X_i \to Y_j $$
        preserves colimits in every variable.
    \end{enumerate}
\end{defn}
The $\infty$-category $\Prcat^\otimes$ with the natural map to $\fin_\part$ is a symmetric monoidal $\infty$-category. However, $\Prcat^\otimes$ is not cartesian; the symmetric monoidal product of presentable $\infty$-categories that it classifies is the \textit{Lurie tensor product} $\otimes$. For $\C,\D$ presentable $\infty$-categories, $\C \otimes \D$ is defined by the universal property that there is a map 
$$f:\C\times \D \to \C \otimes \D $$
that preserves colimits in every variable, and any map $F:\C \times \D \to \mathcal{E}$ that preserves colimits in every variable, factors uniquely through $f$ as a colimit-preserving functor $\C\otimes \D \to \E$.
\begin{rmk}
    Concretely, $\C\otimes \D$ is given by the presentable $\infty$-category $\textup{Fun}^R(\C^\op, \D)$ of functors $\C^\op \to \D$ that preserve small limits, see \cite[Proposition 4.8.1.17]{HA}.

\end{rmk} 
The underlying $\infty$-category $\Prcat$ is the subcategory of $\cat$ spanned by presentable $\infty$-categories and functors that preserve colimits. In particular, all functors in $\Prcat$ are left adjoints.

The inclusion $\Prcat^\otimes \subseteq \cat^\otimes$ is an $\infty$-operad map, and therefore induces an embedding of categories of operad maps
$$\alg_{\C}(\Prcat) \subseteq \alg_\C(\cat) $$
for $\C^\otimes$ any symmetric monoidal $\infty$-category. In the following lemma, we characterize the image of this embedding; in other words, we characterize which $\infty$-operad maps $\C^\otimes \to \cat^\otimes$ factor through $\Prcat^\otimes$.

\begin{lem}\label{lem:lift}
    Let $\C^\otimes$ be a symmetric monoidal $\infty$-category. Then $\Alg_\C(\Prcat)$ is equivalent to the subcategory of $ \alg_\C(\cat)$ spanned by $F:\C^\otimes\to \cat^\otimes$ such that
\begin{enumerate}[label=(\roman*)]
    \item for all $X$ in $\C$, $F(X)$ is a presentable $\infty$-category,
    \item for $f:X\to Y$ in $\C$, the map $F(f):F(X)\to F(Y)$ preserves colimits,
    \item for $X_1,\dots,X_n$ in $\C$, the map
    $$F(X_1)\times \dots \times F(X_n) \to F(X_1 \otimes \dots \otimes X_n) $$
    induced by the natural map $(X_1,\dots,X_1)\to X_1\otimes \dots \otimes X_1$, preserves colimits separately in each variable,
    \end{enumerate}
    and by natural transformations $\alpha:F\to Y$ that are component-wise in $\Prcat$. 
\end{lem}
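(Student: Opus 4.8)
The plan is to show that the inclusion $\Prcat^\otimes \subseteq \cat^\otimes$ induces the claimed identification of $\Alg_\C(\Prcat)$ with a subcategory of $\Alg_\C(\cat)$ by unwinding the definition of $\Prcat^\otimes$ and tracking what it means for an $\infty$-operad map to factor through it. Since $\Prcat^\otimes \subseteq \cat^\otimes$ is an $\infty$-operad map, composition gives a functor $\Alg_\C(\Prcat) \to \Alg_\C(\cat)$, and the first task is to verify that this functor is a (fully faithful) embedding onto a subcategory; this is formal, since $\Prcat^\otimes \to \cat^\otimes$ is the inclusion of a subcategory, so the induced map on algebra objects is again the inclusion of a subcategory (mapping spaces of operad maps are computed fiberwise, and the condition of landing in $\Prcat^\otimes$ is a property, not extra data). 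The real content is to characterize the essential image and the morphisms, which I would do by matching conditions (i) and (ii) of the definition of $\Prcat^\otimes$ against conditions (i)--(iii) of the lemma.

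First I would treat the condition on objects. An $\infty$-operad map $F:\C^\otimes \to \cat^\otimes$ factors through $\Prcat^\otimes$ on objects precisely when, for every object $(X_i)_I$ of $\C^\otimes$, the tuple $F((X_i)_I)$ lies in $\Prcat^\otimes$, i.e.\ each component is presentable. Because $F$ is an operad map, it respects the inert morphisms that exhibit $(X_i)_I$ as the tensor of its components, so it suffices to check this on the objects lying over singletons in $\fin_\part$; these are exactly the objects $F(X)$ for $X$ in $\C$. This yields condition (i).

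Next I would treat morphisms. A morphism $(X_i)_I \to (Y_j)_J$ in $\C^\otimes$ over $\alpha:I \dashrightarrow J$ is sent by $F$ into $\Prcat^\otimes$ exactly when, for each $j \in J$, the induced functor $\prod_{\alpha^{-1}(j)} F(X_i) \to F(Y_j)$ preserves colimits separately in each variable. I would decompose an arbitrary such morphism, using the operad structure, into the inert morphisms (which are automatically sent into $\Prcat^\otimes$ once the objects are presentable) and the active morphisms over maps with singleton target. An active morphism over $\langle n\rangle \to \langle 1\rangle$ is, via the operad map $F$, governed by the multiplication functor $F(X_1)\times\dots\times F(X_n)\to F(X_1\otimes\dots\otimes X_n)$; requiring this to preserve colimits in each variable is exactly condition (iii). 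The special case $n=1$, where the active morphism over $\langle 1\rangle \to \langle 1\rangle$ is just a morphism $f:X\to Y$ of $\C$, gives that $F(f)$ must preserve colimits, which is condition (ii). Conversely, any morphism factors as a composite of inerts and these active generators, and preservation of separate colimits is closed under the relevant composites, so conditions (ii) and (iii) together are equivalent to $F$ factoring through $\Prcat^\otimes$ on all morphisms. Finally, the description of the natural transformations is immediate: a $2$-morphism in $\Alg_\C(\Prcat)$ is a natural transformation $\alpha:F\to G$ of operad maps whose every component lands in $\Prcat^\otimes$, which by definition of $\Prcat^\otimes$ as a subcategory means exactly that each component $\alpha_X:F(X)\to G(X)$ is a colimit-preserving functor between presentable $\infty$-categories, i.e.\ lies in $\Prcat$.

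The main obstacle I anticipate is the bookkeeping needed to reduce an arbitrary morphism of $\C^\otimes$ to inert and active generators in a way compatible with the operad map $F$, and to argue cleanly that the separate-colimit-preservation conditions on the generating active morphisms suffice to control all morphisms; this is where one must be careful that the condition defining $\Prcat^\otimes$ on a composite morphism follows from the conditions on its factors, using that a composite of functors preserving colimits in each variable again does so. Everything else is a matter of translating the two defining clauses of $\Prcat^\otimes$ through the fiberwise description of operad maps, and no genuinely hard input beyond the cited definition from \cite{DAGII} is required.
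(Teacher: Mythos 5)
Your proposal is correct and follows essentially the same route as the paper's proof: both reduce to factoring an arbitrary morphism of $\C^\otimes$ through the tensor-product morphism $(X_i)_I \to (\otimes_{\alpha^{-1}(j)} X_i)_J$, so that condition (iii) controls the multiplication functor and condition (ii) the remaining $\C$-morphism, with objects and natural transformations handled componentwise exactly as you describe. Your extra bookkeeping via the inert/active factorization is just a more explicit organization of the same argument, not a different method.
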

\begin{proof}
    As stated above, the inclusion $\Prcat^\otimes\subseteq \cat^\otimes$ induces an embedding
$$i:\Alg_\C(\Prcat) \to \Alg_\C(\cat).$$
We show that the essential image of $i$ is the subcategory defined in the proposition, which we will denote by $\mathcal{A}$ in this proof. It is clear that the essential image of $i$ is contained in $\mathcal{A}$. On the other hand, let $F$ be in $\mathcal{A}$. Then $ F$ consists of: for each $X$ in $\C$ a presentable $\infty$-category $F(X)$ in $\Prcat$, and for $f:(X_i)_I\to (Y_j)_J$ over $\alpha:I \dashrightarrow J$ consisting of $\C$-morphisms $\otimes_{\alpha^{-1}(j)} X_i \to Y_j$, and each $j\in J$, a functor
\begin{equation}\label{eq:functor}
 \prod_{\alpha^{-1}(j)} F(X_i) \to F(Y_j).   
\end{equation}
We note that $f$ factors as 
$$(X_i)_I \to (\otimes_{\alpha^{-1}(j)} X_i)_J \to (Y_j)_J, $$
and therefore (\ref{eq:functor}) can be written as a composition 
$$ \prod_{\alpha^{-1}(j)} F(X_i) \to F(\otimes_{\alpha^{-1}(j)} X_i) \to F(Y_j).$$
Now by assumption, the first functor preserves colimits separately in each variable, and the second functor preserves colimits. Therefore $F(f)$ can be seen as a morphism in $\Prcat^\otimes$, and $F$ is an $\infty$-operad map 
$$F:\C_\otimes^\op \to \Prcat^\otimes$$
such that $iF = \tilde F$.

Similarly, for $\alpha:F\implies G$ a morphism in $\A$ and $\tilde\alpha:\tilde F \to \tilde G $ the associated morphism in $\Alg_\C(\cat)$, it is clear that this is actually a morphism in $\Alg_\C(\Prcat)$.
\end{proof}

\subsection{Adjoinable squares and recollements}\label{subsect:prereq_recollements}

We recall the following definition from \cite[Section 7.3.1]{htt}.
  Consider a commutative square of categories 
   \begin{center}
        \begin{tikzcd}
            A \arrow[r, "f"] \arrow[d, "g"] & B \arrow[d, "h"] \\
            C \arrow[r, "j" ] & D
        \end{tikzcd}
    \end{center}   
and suppose that $g$ and $h$ have right adjoints $g^R$ and $h^R$ respectively. Then the unit $1\to h^R h$ and the co-unit $g g^R \to 1$ give rise to a natural transformation
$$f g^R \to h^R h f g^R \cong h^R jg g^R \to h^Rj.$$

\begin{defn}
We say that a commutative square 
 \begin{center}
        \begin{tikzcd}
            A \arrow[r, "f"] \arrow[d, "g"] & B \arrow[d, "h"] \\
            C \arrow[r, "j" ] & D
        \end{tikzcd}
    \end{center} 
is \textit{vertically right-adjoinable} if right adjoints $g^R$ and $h^R$ exist, and the natural transformation $fg^R\to h^Rj$ is an isomorphism. Vertically/horizontally left/right-adjoinable squares are defined analogously.

Moreover, we say that the commutative squares 
 \begin{center}
        \begin{tikzcd}
            A \arrow[r, "f"] \arrow[d, "g"] & B \arrow[d, "h"] \\
            C \arrow[r, "j" ] & D
        \end{tikzcd} \hspace{5mm}  and   \hspace{5mm}
           \begin{tikzcd}
            C \arrow[r, "j"] \arrow[d, "g^R"] & D \arrow[d, "h^R"] \\
            A \arrow[r, "f" ] & B
        \end{tikzcd}
    \end{center}  
    (which implicitly commute up to natural equivalences $\eta:hf \to j g $ and $\epsilon: fg^R\to h^Rj$) are \textit{related by adjunction} if there exist adjunctions $g\vdash g^R$ and $h\vdash h^R$, such that $\epsilon$ is the composite 
$$ fg^R\to h^R hfg^R \to h^Rjgg^R \to h^Rj$$
obtained from the adjunctions and $\eta$. 
\end{defn}
\begin{defn}
    We say that a square of $\infty$-categories is vertically/horizontally left/right-adjoinable if the corresponding square of homotopy categories is.
\end{defn}
\begin{defn}
    We say that a square
    \begin{center}
        \begin{tikzcd}
            (\C_i)_I \arrow[d] \arrow[r] & (\D_j)_J \arrow[d] \\
            (\E_k)_K \arrow[r]& (\F_l)_L
        \end{tikzcd}
    \end{center}
in $\cat^\otimes$, over partial maps $\alpha:I\dashrightarrow J$, $\beta:I\dashrightarrow K$, $\gamma:J\dashrightarrow L$ and $\gamma:K \dashrightarrow L$ is \textit{vertically/horizontally left/right-adjoinable} if for every $l\in L$, the induced square of $\infty$-categories
\begin{center}
    \begin{tikzcd}
        \prod_{\epsilon^{-1}(l)} \C_i \arrow[r] \arrow[d] & \prod_{\gamma^{-1}(l)} \D_j \arrow[d]\\
        \prod_{\beta^{-1}(l)} \E_k \arrow[r] & \F_l
    \end{tikzcd}
\end{center}
is a vertically/horizontally left/right-adjoinable square in $\cat$.
\end{defn}

We denote by $\PrSt^\otimes \subseteq \Prcat^\otimes$ the full subcategory spanned by objects $(X_i)_I$ where every $X_i$ is a presentable stable $\infty$-category. This also defines a subcategory $\Prst \subseteq \cat$, spanned by presentable stable $\infty$-categories and left adjoint functors.

Occasionally, we will use the notation $\cat^{LL}$ or $\PrSt^{LL}$, to denote the wide subcategory of (presentable stable) $\infty$-category spanned by functors that are left adjoints, and whose right adjoint also has a right adjoint. We call these functors \textit{left-left} adjoints.

We will denote by $\cat^\ex\subseteq \cat$ the subcategory stable $\infty$-categories and exact functors; that is, functors that preserve finite limits or, equivalently, finite colimits.

\begin{defn}
    We call a sequence 
    $$ \A' \xrightarrow{f} \A \xrightarrow{p} \A''$$
    in $\cat^\ex$ a \textit{Verdier sequence} if it is both a fiber and a cofiber sequence. A Verdier sequence is a \textit{recollement} if $f$ and $p$ both a have a left and a right adjoint.
    \end{defn}
    For a recollement as above, we denote by $f^L$ and $p^L$ the left adjoints, and by $p^R$ and $f^R$ the right adjoints, respectively. By \cite[Lemma A.2.5]{nine}, the sequences 
$$A'' \xrightarrow{p^L} \A \xrightarrow{f^L} \A' $$
and 
$$A'' \xrightarrow{p^R} \A \xrightarrow{f^R} \A' $$
are also Verdier sequences. It follows that in a recollement the functors $f$, $p^L$ and $p^R$ are all fully faithful embeddings, and $f^L,f^R$ and $p$ are full and essentially surjective. We will call the sequence formed by the left adjoints of a recollement a \textit{left-recollement}, and the sequence formed by the right adjoints of a recollement a \textit{right-recollement}. 

For $ \A' \xrightarrow{f} \A \xrightarrow{p} \A''$ a recollement, from results in \cite[Section A.2]{nine} it follows that all the classical conditions for a recollement of triangulated categories are satisfied, such as
\begin{itemize}
    \item for $A$ in $\A$, there are bifiber sequences 
    $$f  f^R A \to A \to p^Rp A $$
    and
    $$p^L g p \to A \to f f^LA,$$
\item the unit $\id \to p p^L$ and co-unit $p p^R\to \id$ are isomorphisms,
\item the unit $\id \to f^Rf$ and co-unit $f^Lf \to \id$ are isomorphisms.
\end{itemize}

We collect a few lemmas about adjoinable squares in $\cat^\ex$. 
\begin{lem}\label{lem:induced_adjoint_0}
       Let 
    \begin{center}
    \begin{tikzcd}
                \A' \arrow[r, "f"] \arrow[d, "{\phi}"] & \A \arrow[r, "p"] \arrow[d, "\psi"] & \A'' \arrow[d, "\chi"] \\
        \B'\arrow[r, "g" ] & \B \arrow[r, "q "] & \B ''
    \end{tikzcd}
    \end{center}
    be a commuting diagram in $\cat^\ex$, where both rows are fiber sequences. If $\psi$ and $\chi$ have right adjoints $\psi^R$ and $\chi^R$ (or left adjoints $\psi^L$ and $\chi^L$), and the square on the right is vertically right (left) adjoinable, then $\phi$ has a right adjoint $\phi^R$ (a left adjoint $\phi^L$), and the square on the left is vertically right (left) adjoinable.
    
\end{lem}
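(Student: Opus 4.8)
The plan is to treat the vertically right-adjointable case; the left-adjointable case is entirely analogous, with the left adjoints $\psi^L,\chi^L$ in place of $\psi^R,\chi^R$ (the construction uses only that $\A'=\fib(p)$, that $\B'=\fib(q)$ forces $qg\simeq 0$, and that the relevant adjoint preserves the zero object, all of which hold equally for left adjoints). Throughout I use that in $\cat^\ex$ a fiber sequence $\A'\xrightarrow{f}\A\xrightarrow{p}\A''$ exhibits $f$ as the fully faithful inclusion of the fiber $\fib(p)$, so that a functor into $\A$ factors through $f$ precisely when its composite with $p$ is null; the same holds for $g$ and $q$.

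First I would construct the candidate adjoint as a restriction of $\psi^R$ to fibers. Right-adjointability of the right square provides a natural equivalence $p\psi^R\simeq\chi^R q$. Because $\B'=\fib(q)$, the composite $qg$ is canonically null, and because $\chi^R$ is a right adjoint it preserves the zero object; hence $p\psi^R g\simeq\chi^R qg\simeq\chi^R 0\simeq 0$. By the universal property of $\A'=\fib(p)$, the functor $\psi^R g:\B'\to\A$ factors, essentially uniquely, as $f\phi^R$ for some $\phi^R:\B'\to\A'$; in other words $\psi^R$ carries the full subcategory $\B'$ into $\A'$. On the other side, commutativity of the left square gives $\psi f\simeq g\phi$, which lands in the essential image of $g$, so $\psi$ carries $\A'$ into $\B'$ with $\phi$ the induced restriction.

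Next I would promote this to an adjunction. Since $\A'\hookrightarrow\A$ and $\B'\hookrightarrow\B$ are full subcategories each preserved by both members of the adjoint pair $(\psi,\psi^R)$, the restricted functors $\phi$ and $\phi^R$ form an adjunction $\phi\dashv\phi^R$ whose unit and counit are the restrictions of the unit $\eta:\id\to\psi^R\psi$ and counit $\epsilon:\psi\psi^R\to\id$. Concretely, for $A'\in\A'$ and $B'\in\B'$ one has the natural chain
\[
\map_{\A'}(A',\phi^R B')\simeq\map_\A(fA',\psi^R gB')\simeq\map_\B(\psi fA',gB')\simeq\map_\B(g\phi A',gB')\simeq\map_{\B'}(\phi A',B'),
\]
using full faithfulness of $f$ and $g$, the equivalences $f\phi^R\simeq\psi^R g$ and $\psi f\simeq g\phi$, and the adjunction $\psi\dashv\psi^R$ in the middle.

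Finally I would verify that the left square is vertically right-adjointable, that is, that the Beck--Chevalley transformation $f\phi^R\to\psi^R g$ built from $\eta$ and the counit of $\phi\dashv\phi^R$ is an equivalence. Under the identification $f\phi^R\simeq\psi^R g$ of the previous step, and using that the counit of $\phi\dashv\phi^R$ is the restriction of $\epsilon$, this transformation is the whiskering along $g$ of the composite $\psi^R\xrightarrow{\eta\psi^R}\psi^R\psi\psi^R\xrightarrow{\psi^R\epsilon}\psi^R$, which is $\id_{\psi^R}$ by one of the triangle identities for $\psi\dashv\psi^R$; hence it is an equivalence. I expect this last compatibility to be the main obstacle: the nontrivial content is not that \emph{some} equivalence $f\phi^R\simeq\psi^R g$ exists (that is immediate from the construction) but that the \emph{canonical} Beck--Chevalley map is this equivalence, which rests on the restricted adjunction data genuinely descending from $\psi\dashv\psi^R$. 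Since adjointability is tested on homotopy categories, it is enough to carry out this bookkeeping there.
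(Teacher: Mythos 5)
Your proof is correct and takes essentially the same approach as the paper: both construct $\phi^R$ as the functor induced on fibers by the square formed from $(g,q)$ and $(\psi^R,\chi^R)$ (which commutes precisely because the right square is vertically right-adjointable, giving $p\psi^R g\simeq \chi^R qg\simeq 0$), and both verify the adjunction $\phi\dashv\phi^R$ by the same chain of hom-equivalences using full faithfulness of $f$ and $g$ together with $\psi\dashv\psi^R$. Your final verification that the canonical Beck--Chevalley map $f\phi^R\to\psi^R g$ (rather than merely some equivalence) is invertible, via the triangle identity, is a point the paper leaves implicit, but it is the same underlying argument.
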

\begin{proof} 
We proof the statement about right adjoints. We can form the diagram
  \begin{center}
    \begin{tikzcd}
                \B'\arrow[r, "g" ] \arrow[d, dashed] & \B  \arrow[r, "q "] \arrow[d, "\psi^R"] & \B ''   \arrow[d, "\chi^R"]     \\
   \A' \arrow[r, "f"]  & \A \arrow[r, "p"]  & \A''
    \end{tikzcd}
    \end{center}
   such that there is an induced functor between the fibers, which we denote $\phi^R:\A'\to \B'$. Then $\phi^R$ is the right adjoint of $\phi$. Indeed, for $a$ in $\A'$ and $b$ in $\B'$, since $f$ and $g$ are fully faithful, we have
   $$\hom_{\B'}(\phi(a),b) = \hom_{\B}(g\phi(a),g(b)) = \hom_\B(\psi f(a),g(b)) = \hom_\A(f(a),\psi^Rg(b)) $$
   $$= \hom_\A(f(a),f \phi^R(b)) = \hom_{\A'}(a,\phi^R(b)).  $$
\end{proof}

\begin{lem}\label{lem:cube}

    Consider a cube
    \begin{center}
\begin{tikzcd}
 & \B \arrow[rr, "b"] \arrow[dd, "h",near start] &                                                & \B' \arrow[dd, equal] \\
\A \arrow[ru, "g"] \arrow[rr, crossing over, "a"', near start ] \arrow[dd, "f"'] &                                                  & \A' \arrow[ru, "g'"] 
& \\
& \D \arrow[rr, "d", near start]                               &                                                & \B'            \\
\C \arrow[rr, "c"'] \arrow[ru, "j"]                                &                                                  & \A' \arrow[ru, "g'"'] \arrow[uu,  crossing over, equal ] &               
\end{tikzcd} \end{center}
    in $\cat^\ex$, where we assume that $a,b,c,d,f$ and $h$ have right adjoints, denoted  $a^R,b^R,c^R,d^R,f^R$ and $h^R$ respectively. Moreover we assume that $c^R$ is full and essentially surjective, and that $(a^R,b^R)$ commutes with $(g,g')$, and $(c^R,d^R)$ commutes with $(j,g')$. Then the leftmost face is vertically right-adjoinable. 
\end{lem}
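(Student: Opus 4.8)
The plan is to pass to homotopy categories and then run the $2$-categorical calculus of mates. By the definition preceding the statement, the leftmost face is vertically right adjointable exactly when the induced square of homotopy categories is, so throughout I would work in the homotopy $2$-category, where each right adjoint descends and the unit/counit data is inherited; all functors and natural transformations below are read there.

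First I would extract the purely formal content of the front and back faces. They commute and carry the identity on their right-hand edge, so $a \simeq c f$ and $b \simeq d h$; passing to right adjoints of composites yields canonical equivalences $\alpha \colon f^R c^R \xrightarrow{\sim} a^R$ and $\beta \colon h^R d^R \xrightarrow{\sim} b^R$. I would then restate the two standing hypotheses as invertibility of mates: ``$(a^R,b^R)$ commutes with $(g,g')$'' says that the (vertical right-adjointability) mate $\sigma \colon g\,a^R \to b^R g'$ of the top face is an equivalence, and ``$(c^R,d^R)$ commutes with $(j,g')$'' says that the mate $\tau \colon j\,c^R \to d^R g'$ of the bottom face is an equivalence.

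The key observation is that, after factoring its two parallel legs $a = c f$ and $b = d h$, the top face is exactly the \emph{vertical pasting} of the leftmost face on top of the bottom face: inserting the middle row $\C \xrightarrow{j} \D$ splits the top-face square into the leftmost-face square (with legs $f,h$) stacked over the bottom-face square (with legs $c,d$). Let $\mu \colon g f^R \to h^R j$ denote the vertical right-adjointability mate of the leftmost face, whose invertibility is the goal. Applying the pasting law for mates — the mate of a vertically pasted square is the vertical composite of the mates — to this decomposition, and using $\alpha,\beta$ to identify $a^R = f^R c^R$ and $b^R = h^R d^R$, gives the identity
\[
(h^R \ast \tau)\circ(\mu \ast c^R) \;=\; (\beta^{-1}\ast g')\circ \sigma \circ (g\ast\alpha)
\]
of natural transformations $g f^R c^R \to h^R d^R g'$. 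The right-hand side is a composite of equivalences ($\sigma$ by hypothesis, $\alpha,\beta$ canonical), hence an equivalence; since $h^R \ast \tau$ is an equivalence because $\tau$ is, the whiskered mate $\mu \ast c^R$ is an equivalence.

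Finally, because $c^R$ is essentially surjective, every object of its target is equivalent to one of the form $c^R(y)$; as $\mu \ast c^R$ is an equivalence, $\mu$ is an equivalence on each such object, hence on all objects, so $\mu$ itself is an equivalence and the leftmost face is vertically right adjointable. (Only essential surjectivity of $c^R$ enters; its fullness is not needed here.) The one genuinely delicate point is the displayed pasting identity: it is a bookkeeping-heavy but standard instance of the mate calculus, and it is precisely the reduction to homotopy categories built into the definition of adjointability that turns it into an ordinary $2$-categorical computation rather than an $\infty$-categorical coherence problem.
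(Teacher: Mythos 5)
Your proof is correct and is essentially the paper's own argument: the paper likewise reduces to checking the mate after precomposition with $c^R$, rewrites $gf^Rc^R = ga^R = b^Rg'$ and $h^Rjc^R = h^Rd^Rg' = b^Rg'$ using the front/back faces and the two commuting hypotheses, and concludes from the surjectivity of $c^R$. Your version merely makes the implicit pasting-of-mates identity explicit and correctly observes that only essential surjectivity (not fullness) of $c^R$ is actually used.
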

\begin{proof}
    To show that $gf^R=h^Rj$, it suffices to show that $gf^R c^R=h^Rj c^R$. The left-hand side we can write as $ga^R = b^Rg'$, and the right-hand side as $h^Rd^Rg' = b^Rg'$.
\end{proof}

\begin{lem}\label{lem:another_cube}
    Consider a cube
        \begin{center}
\begin{tikzcd}
 & \B \arrow[rr, "b"] \arrow[dd, "h",near start] &                                                & \B' \arrow[dd, "h'", near start] \\
\A \arrow[ru, "g"] \arrow[rr, crossing over, "a"', near start ] \arrow[dd, "f"', near start] &                                                  & \A' \arrow[ru, "g'"]  
& \\
& \D \arrow[rr, "d", near start]                               &                                                & \D'            \\
\C \arrow[rr, "c"'] \arrow[ru, "j"]                                &                                                  & \C' \arrow[ru, "g'"'] \arrow[uu,  crossing over, leftarrow, "f'"', near end ]  &               
\end{tikzcd} \end{center}
in $\cat^\ex$, assume that $f,g,h, j$ have right adjoints $f^R,g^R, h^R$ and $j^R$ respectively, that the front, back and rightmost face are vertically right-adjoinable, and that $b$ is a fully faithful embedding. Then the leftmost face is vertically right-adjoinable.
\end{lem}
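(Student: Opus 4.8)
The plan is to imitate the pasting strategy already used in the proof of Lemma \ref{lem:cube}, but now exploiting the fully faithfulness of $b$ instead of the essential surjectivity of a right adjoint. Writing out the six faces, the leftmost (``left'') face is the square with top edge $g\colon\A\to\B$, bottom edge $j\colon\C\to\D$ and verticals $f\colon\A\to\C$, $h\colon\B\to\D$; since $f,h$ admit right adjoints, vertical right-adjointability means exactly that the canonical comparison natural transformation $\alpha\colon gf^R\to h^R j$ (both functors $\C\to\B$) is an equivalence. The key observation is that $b$ is fully faithful, hence conservative, so it suffices to show that the whiskered map $b\alpha\colon bgf^R\to bh^Rj$ is an equivalence; this replaces the ``$c^R$ full and essentially surjective'' step of Lemma \ref{lem:cube}.

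Next I would produce an explicit chain of equivalences for $bgf^R$ by traversing the cube, using the commutativity of the top and bottom faces together with the assumed vertical right-adjointability of the front, back and right faces. Concretely:
\[
bgf^R \cong g'af^R \cong g'f'^R c \cong h'^R j' c \cong h'^R d j \cong bh^R j,
\]
where the first step uses $bg\cong g'a$ (top face), the second uses $af^R\cong f'^R c$ (front face right-adjointable), the third uses $g'f'^R\cong h'^R j'$ (right face right-adjointable), the fourth uses $j'c\cong dj$ (bottom face), and the last uses $h'^R d\cong bh^R$ (back face right-adjointable). Note that this argument never requires $b$ to possess an adjoint; only its conservativity and the existence of $f^R,h^R,f'^R,h'^R$ (the primed ones being guaranteed by the adjointability hypotheses on the front, back and right faces) are used.

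The routine but genuinely delicate step — and the one I expect to be the main obstacle — is checking that the above composite of equivalences really is the whiskering $b\alpha$ of the canonical comparison map $\alpha$, rather than merely \emph{some} equivalence $bgf^R\simeq bh^Rj$. This is a coherence/mate computation: one must verify that pasting the mates associated to the front, right and back faces (interleaved with the strictly commuting top and bottom faces) reproduces the mate of the left face after postcomposition with $b$. Since the excerpt defines adjointability of squares of $\infty$-categories through the corresponding squares of homotopy categories, I can carry this verification out entirely in the $2$-categorical mate calculus on homotopy categories, where it becomes a standard (if tedious) compatibility-of-mates diagram chase. Once that identification is in place, conservativity of $b$ upgrades the equivalence $b\alpha\simeq\text{iso}$ to the desired equivalence $\alpha$, proving that the leftmost face is vertically right-adjointable.
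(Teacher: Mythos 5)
Your proof is correct and follows essentially the same route as the paper: both reduce to checking the comparison after postcomposition with the fully faithful (hence conservative) functor $b$, and then identify $bgf^R$ and $bh^Rj$ with $h'^Rj'c$ by the identical chain through the top, front, right, bottom and back faces. The mate-compatibility check you flag as the delicate step is the standard pasting-of-mates argument; the paper's own proof elides it silently, so your version is if anything slightly more careful.
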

\begin{proof}
    Since $b$ is a fully faithful embedding, it suffices to check that 
    $$bgf^R = bh^Rj.$$
    Now the left-hand side can be written as $g'af^R = g'f'^Rc = h'^R j' c$,
    and the right-hand side as $h'^R dj= h'^R j 'c$.
\end{proof}

The proof of the following lemma is essentially the proof of \cite[Theorem 2.5]{parshall_scott}. 
\begin{lem}\label{lem:parshall_scott}
 Suppose we have the following diagram of adjoint triples and stable $\infty$-categories
 \begin{equation*}
\begin{tikzcd}[sep=large]
\A' \arrow[d, "\phi" description] \arrow[r, "f" description]                                         & \A \arrow[d, "\psi" description] \arrow[r, "g" description] \arrow[l, "f^L"', shift right=2] \arrow[l, "f^R", shift left=2]                                         & \A'' \arrow[d, "\chi" description] \arrow[l, "g^R", shift left=2] \arrow[l, "g^L"', shift right=2]                                         \\
\B' \arrow[r, "h" description] \arrow[u, "\phi^R"', shift right=2] \arrow[u, "\phi^L", shift left=2] & \B \arrow[r, "j" description] \arrow[l, "h^R", shift left=2] \arrow[l, "h^L"', shift right=2] \arrow[u, "\psi^R"', shift right=2] \arrow[u, "\psi^L", shift left=2] & \B'' \arrow[l, "j^L"', shift right=2] \arrow[l, "j^R", shift left=2] \arrow[u, "\chi^R"', shift right=2] \arrow[u, "\chi^L", shift left=2]
\end{tikzcd}
\end{equation*}
 where the rows formed by $f,g$ and by $h,j$ are recollements. We assume that the diagram of maps in $\cat^{LR}$ commutes, in other words, $(\phi,\psi)$ commutes with $(f, h)$ and $(\psi,\chi)$ commutes with $(g,j)$ (or equivalently, the diagram of left adjoints or the diagram of right adjoints commutes). Then 
 \begin{itemize}
     \item[(1)] $(\psi,\chi)$ commutes with $(g^L,j^L)$ if and only if $(\phi, \psi)$ commutes with $(f^L,h^L)$,
     \item[(2)] $(\psi,\chi)$ commutes with $(g^R,j^R)$ if and only if $(\phi, \psi)$ commutes with $(f^R,h^R)$,    
     \item[(3)] $(\psi^L,\chi^L)$ commutes with $(g,l)$ if and only if $(\phi^L,\psi^L)$ commutes with $(f,h)$,  
     \item[(4)] $(\psi^R,\chi^R)$ commutes with $(g,l)$ if and only if $(\phi^R,\psi^R)$ commutes with $(f,h)$. 
 \end{itemize}   
\end{lem}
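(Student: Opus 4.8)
The plan is to prove all four equivalences by a single mechanism: feed the relevant functor into one of the two localization (co)fibre sequences of a recollement, use a vanishing relation to kill one of its three terms, and then read off the surviving equivalence. First I would record the tools. Every functor in sight is exact, and in each recollement we have the two localization sequences $g^Lg \to \id_{\A} \to ff^L$ (cofibre) and $ff^R \to \id_{\A} \to g^Rg$ (fibre), together with their counterparts for the bottom row; the vanishing composites $gf \simeq 0$, $f^Lg^L \simeq 0$, $f^Rg^R \simeq 0$ (and $jh\simeq 0$, $h^Lj^L\simeq 0$, $h^Rj^R\simeq 0$); and the equivalences $gg^L \simeq \id \simeq gg^R$, $f^Lf \simeq \id \simeq f^Rf$ expressing full faithfulness, with bottom-row analogues $hh$-style included. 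Passing to left, resp.\ right, adjoints in the two commuting squares $\psi f \simeq h\phi$ and $\chi g \simeq j\psi$ yields $f^L\psi^L \simeq \phi^Lh^L$, $\psi^Lj^L \simeq g^L\chi^L$, resp.\ $f^R\psi^R \simeq \phi^Rh^R$, $\psi^Rj^R \simeq g^R\chi^R$; this is exactly the content of the parenthetical remark in the hypothesis, and these reformulations are what powers (3) and (4). Since adjointability is detected on homotopy categories, I may argue throughout with distinguished triangles, so no $\infty$-categorical coherence beyond this is needed.

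For (1), to prove that $(\phi,\psi)$ commuting with $(f^L,h^L)$ implies $(\psi,\chi)$ commuting with $(g^L,j^L)$, I would start from $\phi f^L \simeq h^L\psi$, whence $h^L\psi g^L \simeq \phi f^Lg^L \simeq 0$ and so $hh^L\psi g^L\simeq 0$. Plugging $\psi g^L$ into the bottom cofibre sequence $j^Lj \to \id_{\B} \to hh^L$ then forces the counit $j^Lj\,\psi g^L \to \psi g^L$ to be an equivalence; precomposing with $g$ and using $\chi g \simeq j\psi$ together with $gg^L\simeq\id$ identifies the source as $j^L\chi g g^L \simeq j^L\chi$, so the canonical mate $j^L\chi \to \psi g^L$ is an equivalence. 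The converse runs symmetrically: from $\psi g^L \simeq j^L\chi$ one gets $h^L\psi g^L \simeq h^Lj^L\chi \simeq 0$, feeds $h^L\psi$ into the top cofibre sequence $g^Lg\to\id_{\A}\to ff^L$ to see that $h^L\psi \to h^L\psi ff^L$ is an equivalence, and uses $\psi f\simeq h\phi$ with $h^Lh\simeq\id$ to rewrite the target as $\phi f^L$. In each case the map produced is literally the canonical mate transformation, because the structural map in the localization sequence is the relevant unit or counit; this is what certifies ``commutes'' in the intended sense.

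The remaining three statements are the same argument after obvious substitutions. For (2) I replace the cofibre sequences by the fibre sequences and the left adjoints by the right adjoints throughout, using $f^Rg^R\simeq 0$ and $gg^R\simeq\id$ in place of $f^Lg^L\simeq 0$ and $gg^L\simeq\id$. For (3) and (4) the functor fed into the localization sequence is the vertical adjoint $\psi^L$, resp.\ $\psi^R$, and the vanishing relation used is $gf\simeq 0$ or $jh\simeq 0$: for instance, for (3), from $g\psi^L \simeq \chi^Lj$ one gets $g\psi^Lh \simeq \chi^Ljh \simeq 0$, so feeding $\psi^Lh$ into $g^Lg \to \id_{\A}\to ff^L$ gives $\psi^Lh \simeq ff^L\psi^Lh$, and the adjoint reformulation $f^L\psi^L\simeq\phi^Lh^L$ with $h^Lh\simeq\id$ identifies $f^L\psi^Lh\simeq\phi^L$, yielding $\psi^Lh\simeq f\phi^L$; the reverse direction and all of (4) are identical after swapping in $\psi^Lj^L\simeq g^L\chi^L$, $gg^L\simeq\id$, resp.\ the right-adjoint versions.

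The argument has no serious obstacle once the mechanism is in place; the only genuine care is bookkeeping. I expect the main friction to be (a) pairing each of the eight implications with the correct localization sequence, vanishing composite, and unit/counit equivalence --- it is easy to reach for $f^Lg^L$ when $gf$ is wanted, or to invoke the wrong row's sequence --- and (b) checking, once, that the equivalence assembled from the localization sequence coincides with the canonical mate, so that the conclusion is a statement about adjointable (``commuting'') squares rather than merely an abstract equivalence of functors. Reducing to homotopy categories makes both points routine and lets the proof follow the triangulated argument of \cite{parshall_scott} line for line.
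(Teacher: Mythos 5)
Your proposal is correct, and it is recognizably the same Parshall--Scott-style argument the paper uses: both proofs run on the recollement localization sequences plus the two commuting squares. But the mechanism for extracting each equivalence differs in a way worth noting. The paper (which proves only the if-direction of (1) and declares the rest similar) applies $\psi$ to the top sequence $g^Lg A \to A \to ff^LA$, rewrites the third term as $hh^L\psi A$ using both hypotheses, compares with the bottom sequence $j^Lj\psi A \to \psi A \to hh^L\psi A$, concludes the fibres agree, and then needs \emph{essential surjectivity} of $g$ to upgrade the objectwise identification $\psi g^L gA \simeq j^L\chi gA$ to the natural equivalence $j^L\chi \simeq \psi g^L$. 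You instead precompose with the adjoint ($\psi g^L$, $\psi^L h$, \dots), use a derived vanishing composite ($f^Lg^L\simeq 0$, $gf\simeq 0$, \dots) to kill one term of a single localization sequence, and read off the equivalence directly. Your route buys two things: it never invokes essential surjectivity, and the identification with the canonical mate is nearly automatic, since the resulting equivalence is visibly the composite of a unit that is invertible in any recollement with a counit you have just shown to be invertible --- whereas the paper's comparison of two sequences leaves implicit both the compatibility of the two maps into $hh^L\psi A$ and the naturality of the induced equivalence of fibres. The one caveat in your write-up is the same one you flag yourself: each of the eight implications must be paired with the correct sequence and vanishing relation (your sketches of (1) and (3) do this correctly, e.g.\ $h^L\psi g^L \simeq \phi f^Lg^L \simeq 0$ and $g\psi^L h \simeq \chi^L jh \simeq 0$), and the mate-identification should be stated once explicitly rather than asserted.
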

\begin{proof}
The proofs of all statements are very similar, so we proof the if-direction of (1). Since the upper row is a recollement, for $A$ an object of $\A$ there is a bifiber sequence 
$$g^L g A \to A \to f f^LA.$$
Applying $\psi$ gives a bifiber sequence
$$\psi g^L g A \to \psi A \to \psi f f^LA.$$
By commutativity the last term equals $h\phi f^L A$, and using that $(\phi, \psi)$ commutes with $(f^L,h^L)$, we can rewrite this is $h h^L \psi A$. There is also a bifiber sequence
$$j^L j \psi A \to \psi A \to h h^L \psi  A,$$
which shows that $\psi g^L g A$ must be equivalent to $j^L j \psi A$. By commutativity, the latter is equivalent to $j^L\chi g A$. Now since $g$ is essentially surjective, this shows that the natural transformation $j^L\chi \to \psi g^L$ is an equivalence.
\end{proof}

\subsection{Functorially passing to adjoints}\label{subsect:prereq_passing_to_adjoints}
In this subsection we prove a generalization of \cite[Proposition 2.2.4]{liu_zheng_six_ops}, \textit{Partial Adjoint}. We first state informally what this proposition says, in the case that $R$ is a cartesian multisimplicial nerve $\C(I_1,\dots, I_k)$, i.e., a $k$-simplicial set as defined in Definition \ref{defn:multisimplicial_nerve} for $\C$ an $\infty$-category and $I_1,\dots, I_k$ sets of morphisms.  

Consider a functor 
$$F:\delta_k^*\C(I_1,\dots, I_k)  \to \cat.$$
Suppose that for some set of indices $ J \subseteq \{1, \dots, k \}$, $F$ sends $j$-simple edges to left (or right) adjoints for all $j$ in $J$. Moreover, suppose that $F$ sends squares in $\C(I_1,\dots, I_k)$, whose vertical edges are  $j$-simple edges for some $j\in J$, and whose horizontal edges are $i$-simple edges for some $i \notin J$, to vertically left (or right) adjoinable squares. Note that such a square in $\C(I_1,\dots, I_k)$ can be identified with a pullback square in $\C$, whose vertical morphism are in $I_j$, and whose horizontal morphisms are in $I_i$. Then the Partial Adjoint proposition constructs a functor
$$F':\delta_k^*\C(I_1,\dots, I_j^\op, \dots I_k) \to \cat,$$
with $\C(I_1,\dots, I_j^\op, \dots I_k)$ the cartesian multisimplicial nerve of $\C$ and the classes of morphisms $I_i$ for $i\notin J$, and $I_j^\op$ for $j\in J$. For $j$ in $J$, $F'$ sends a $j$-simple edge in $\C(I_1,\dots, I_j^\op, \dots I_k)$, which can be identified with a morphism $f$ in $I_j^\op$, to the left (or right) adjoint of $F(f)$. Moreover, a square with vertical $j$-simple morphisms, and horizontal $i$-simple morphisms for $i\notin J$, is sent to the left (or right) adjoint of the square that is the $F$-image of the corresponding square in $\C(I_1,\dots, I_k)$. In other words, $F'$ is constructed from $F$ by ``passing to the adjoint'' on simple morphisms in the dimensions that are in $J \subseteq \{1,\dots, k\}$. 

Utilizing the idea of \cite[Remark 2.2.5(5)]{liu_zheng_six_ops}, we want to make this passing to adjoints {functorial}; in other words, we construct an equivalence of categories between the $\infty$-category of functors that satisfy the conditions of Partial Adjoint, and the $\infty$-category of functors that are the result of applying Partial Adjoint.
\begin{defn}
     Let $R$ be an $I$-simplicial set and $J\subseteq I$ a subset. Let $$\fun^{BC}_{J,\textup{left}}(\delta_I^*R,\cat) \subseteq \fun(\delta_I^*R,\cat)$$
    be the subcategory spanned by functors 
    $f:\delta_I^*R \to \cat$ such that
    \begin{itemize}
        \item every $i$-simple edge in $\delta_I^*R $ is sent to a left adjoint functor in $\cat$
        \item every square in $\delta_I^*R $ whose vertical edges are $j$-simple for some $j\in J$, and whose horizontal edges are $i$-simple for some $i\notin J$, is sent to a vertically right-adjoinable square in $\cat$,
    \end{itemize}
    and by natural transformations $\alpha:f\to g$ such that for $j\in J$, and any $j$-simple edge $e$ between vertices $x$ and $y$, the square
    \begin{center}
        \begin{tikzcd}
            f(x)\arrow[d, "f(e)"]\arrow[r, "\alpha_x"] & g(x) \arrow[d, "g(e)"]\\
            f(y) \arrow[r, "\alpha_y"] & g(y)
        \end{tikzcd}
    \end{center}
    is vertically right-adjoinable. Let $$\fun_{J,\textup{right}}(\delta_I^*R,\cat) \subseteq \fun(\delta_I^*R,\cat)$$ is defined analogously, with left adjoint replaced by right adjoint, and right-adjoinable replaced by left-adjoinable.
\end{defn}
For the notation $\op^I_J$ in the following proposition, see \cite[Section 1.3]{liu_zheng_six_ops}. In the case that $R=\C(I_1,\dots, I_k)$ as before, $\op^I_J R$ is the $k$-simplicial set $\C(I_1,\dots, I_j^\op, \dots I_k)$ as before: the cartesian multisimplicial nerve of $\C$ and the classes of morphisms $I_i$ for $i\notin J$, and $I_j^\op$ for $j\in J$.

\begin{lem}[Functorial Partial Adjoint]\label{lem:functorial_PA}
    For $R$ an $I$-simplicial set and $J\subseteq I$, there is an equivalence of categories 
    $$PA: \fun_{J,\textup{left}}(\delta_I^*R,\cat) \xrightarrow{\sim} \fun_{J,\textup{right}}(\delta_I^*\op^I_J R,\cat).$$
    For $f:\delta_I^*R \to \cat$ in $\fun_{J,\textup{left}}(\delta_I^*R,\cat)$, $PA(f)$ has the properties of $f_J$ in \cite[Proposition 2.2.4]{liu_zheng_six_ops}
\end{lem}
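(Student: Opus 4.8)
The plan is to obtain $PA$ as a single application of the multisimplicial Partial Adjoint of \cite[Proposition 1.4.4]{liu_zheng_six_ops} to a \emph{universal family}, along the lines sketched in \cite[Remark 1.4.5(5)]{liu_zheng_gluing}. The key observation is that the construction $f \mapsto f_J$ can be performed ``in $\mathcal{E}$-families'' for any auxiliary $\infty$-category $\mathcal{E}$, by adjoining one more simplicial direction. Concretely, for any $I$-simplicial set $R$ and any $\infty$-category $\mathcal{E}$, the external product $R \boxtimes \mathcal{E}$ is an $(I\sqcup\{0\})$-simplicial set, and taking diagonals over $I\sqcup\{0\}$ gives a natural isomorphism $\delta^*(R\boxtimes \mathcal{E}) \cong \delta^* R \times \mathcal{E}$. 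Hence a functor $\mathcal{E} \to \fun(\delta^* R, \cat)$ is the same datum as a functor $\delta^*(R\boxtimes \mathcal{E}) \to \cat$.

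Taking $\mathcal{E} = \fun_{J,\textup{left}}(\delta^* R, \cat)$ together with the identity functor, I first uncurry to a universal family $\mathrm{ev}: \delta^*(R\boxtimes \mathcal{E}) \to \cat$. The two conditions defining objects of $\fun_{J,\textup{left}}$ say precisely that $\mathrm{ev}$ sends every $j$-simple edge ($j\in J$) in the $R$-directions to a left adjoint, and every mixed square (vertical $j$-simple with $j \in J$, horizontal $i$-simple with $i\notin J$) to a vertically right-adjointable square; these hold because they hold in each $\mathcal{E}$-fiber. The condition on \emph{morphisms} in the definition of $\fun_{J,\textup{left}}$ says exactly that any square whose vertical edge is $j$-simple ($j\in J$) and whose horizontal edge lies in the new $0$-direction is again vertically right-adjointable. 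Thus $\mathrm{ev}$ satisfies the hypotheses of Partial Adjoint for $J \subseteq I \subseteq I\sqcup\{0\}$, passing to adjoints only in the $J$-directions and leaving the $0$-direction untouched. Applying it yields $\mathrm{ev}_J: \delta^*\op^{I\sqcup\{0\}}_J(R\boxtimes \mathcal{E}) \to \cat$. Since $\op^{I\sqcup\{0\}}_J$ reverses only the $J$-directions, its target is $\delta^*\big((\op^I_J R)\boxtimes \mathcal{E}\big) \cong \delta^* \op^I_J R \times \mathcal{E}$; re-currying gives a functor $\mathcal{E} \to \fun(\delta^* \op^I_J R, \cat)$, which I take to be $PA$. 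That $PA$ lands in $\fun_{J,\textup{right}}$ — that it sends $j$-simple edges to right adjoints and the relevant squares to left-adjointable squares — is exactly the list of properties of $f_J$ recorded in \cite[Proposition 1.4.4]{liu_zheng_six_ops}, read off fiberwise; this simultaneously verifies the final sentence of the lemma.

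It remains to prove $PA$ is an equivalence. The same recipe applied to $\op^I_J R$, now passing to \emph{left} adjoints in the $J$-directions (the op-dual form of Partial Adjoint), produces $PA': \fun_{J,\textup{right}}(\delta^* \op^I_J R, \cat) \to \fun_{J,\textup{left}}(\delta^* \op^I_J\op^I_J R, \cat)$; since $\op^I_J \op^I_J = \mathrm{id}$ this is a candidate inverse. To identify the round-trips with identities I would again package everything as one universal family: the composite $PA'\circ PA$ arises from applying Partial Adjoint twice to the universal family over $\mathcal{E} = \fun_{J,\textup{left}}(\delta^* R, \cat)$, and fiberwise it takes the right adjoint of the left adjoint of each $F(f)$, which is canonically $F(f)$ by uniqueness of adjoints. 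The contractibility of the space of adjoints in the $\infty$-categorical setting then upgrades this objectwise identification to a natural equivalence $PA'\circ PA \simeq \mathrm{id}$, and symmetrically $PA\circ PA' \simeq \mathrm{id}$.

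The main obstacle I anticipate is bookkeeping rather than conceptual: one must check that Liu--Zheng's construction genuinely applies with one extra, undistinguished simplicial direction indexed by an arbitrary $\infty$-category $\mathcal{E}$ (rather than a class of morphisms in a fixed $\C$), and that the morphism-level adjointability conditions in the two $\fun$-categories match, on the nose, the hypotheses and conclusions of Partial Adjoint in this extra direction. Making the round-trip equivalences \emph{natural}, rather than merely objectwise, is the subtlest point, and is where invoking the uniqueness of adjoints together with the naturality already built into the Partial Adjoint construction does the real work.
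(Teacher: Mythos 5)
Your proposal is correct and is essentially the paper's own proof: the paper likewise uncurries the evaluation functor $\fun_{J,\textup{left}}(\delta^*R,\cat)\times\delta^*R\to\cat$ into a family over an $(I\cup\{0\})$-simplicial set, applies Partial Adjoint with respect to $J\subseteq I\cup\{0\}$, and re-curries to obtain $PA$. Your more detailed treatment of the inverse (via the dual construction and uniqueness of adjoints to identify the round-trips with the identity) fills in what the paper dispatches with the single remark that ``the process can be reversed.''
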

\begin{proof}
 We consider the evaluation functor
$$\textup{ev}:\fun_{J,\textup{left}}(\delta_I^*R,\cat) \times \delta_I^*R \to \cat.$$
We can rewrite the source of this functor as
$$\delta^*_{I \sqcup \{0\}}(\Fun_{J,\textup{left}}(\delta_I^*R,\cat) \boxtimes R),$$
in other words, the diagonal of the $I\sqcup \{0\}$-simplicial set that is the exterior product of the simplicial set $\Fun_{J,\textup{left}}(\delta_I^*R,\cat)$ and the $I$-simplicial set $R$  (see also \cite[Definition 1.3.7]{liu_zheng_six_ops}; the  $(n_i)_{i\in I\cup\{0\}}$-simplices of $\Fun_{J,\textup{left}}(\delta_I^*R,\cat) \boxtimes R$ are pairs of an $n_0$-simplex of $\fun_{J,\textup{left}}(\delta_I^*R,\cat)$ and a $(n_i)_{i\in I}$-simplex of $R$). Then the functor $\textup{ev}$ satisfies the conditions of the Partial Adjoint proposition with respect to the subset $J\subseteq I\cup\{0\}$. Applying  Partial Adjoint now gives a functor 
$$\textup{ev}':\delta^*_{I \sqcup \{0\},J}(\Fun_{J,\textup{left}}(\delta_I^*R,\cat) \boxtimes R) \to \cat.$$
We can rewrite $\delta^*_{I \sqcup \{0\},J}(\Fun_{J,\textup{left}}(\delta_I^*R,\cat) \boxtimes R)$ again as $\fun_{J,\textup{left}}(\delta_I^*R,\cat) \times \delta_I^*\op^I_J R $,
and it follows that $\textup{ev}'$ induces a functor
$$PA:\fun_{J,\textup{left}}(\delta_I^*R,\cat)\to \fun(\delta_I^*\op^I_J R, \cat).$$
From the construction it follows that this functor lands in $\fun_{J,\textup{right}}(\delta_I^*\op^I_J R,\cat)$. To see that this gives the desired equivalence, we just need to observe that the process can be reversed, to produce an inverse
 $$PA^{-1}:\fun_{J,\textup{right}}(\delta_I^*\op^I_J R,\cat)  \rightarrow\fun_{J,\textup{left}}(\delta_I^*R,\cat).$$
\end{proof}

\section{Nagata six-functor formalisms}\label{sect:reducingtospan}
In this section we recall the definitions of abstract three- and six-functor formalisms from \cite{mann_thesis}. Then we specialize to a class of six-functor formalisms that we call Nagata. These are six-functor formalisms for which the adjoints $f^*\vdash f_*$ and $f_!\vdash f^!$ form adjoint triples when $f$ is in one of two special classes of morphisms $I$ and $P$. We then prove Proposition \ref{propx:intro0}, showing that a Nagata six-functor formalism is uniquely determined by its restriction to morphisms in $I$ and $P$. 

\subsection{Nagata setups and six-functor formalisms}\label{subsect:nagata_setup_and_6ff}
We start by expanding upon some of the definitions in \cite{mann_thesis}. 
\begin{defn}[{\cite[Definition A.5.1]{mann_thesis}}]
    A \textup{geometric setup} is a pair $(\C,E)$ where $\C$ is an $\infty$-category and $E$ a collection of (homotopy classes of) morphisms, such that
   \begin{itemize}
       \item[(i)] $E$ contains all isomorphisms and is stable under composition,
       \item[(ii)] pullbacks of morphisms in $E$ under arbitrary morphisms in $\C$ exists, and remain in $E$. 
   \end{itemize}
\end{defn}
\begin{defn}\label{defn:Nagata_setup}
    A \textit{Nagata setup} $(\C,E,I,P)$ is a geometric setup $(\C,E)$ with $I, P \subseteq E$ such that 
    \begin{enumerate}[label=(\arabic*)]
        \item $\C$ has finite products,
        \item $(\C,I)$ and $(\C,P)$ are geometric setups, we often denote morphisms in $I$ by $\hookrightarrow $ and morphisms in $P$ by $\twoheadrightarrow$,
        \item for any morphism $f\in P$ or $f\in I$, there exists $n\geq 2$ such that $f$ is $n$-truncated,
        \item for any morphism $f\in E$, there are $i\in I$ and $p\in P$ such that $f=p\circ i$,
         \item given $f:X\to Y$ in $\C$ and $g:Y\hookrightarrow Z$ in $I$, we have $f\in I$ if and only if $g\circ f$ in $I$,
        \item given $f:X\to Y$ in $\C$ and $g:Y\twoheadrightarrow Z$ in $P$, we have $f\in P$ if and only if $g\circ f$ in $P$.
    \end{enumerate}
\end{defn}
We recall that in an $\infty$-category, isomorphisms are $-2$-truncated, and $f$ is $n$-truncated if and only if the diagonal $\Delta_f$ is $(n-1)$-truncated for $n\geq -1$ (see also \cite[Remark 5.5.6.12 and Lemma 5.5.6.14]{htt}). In particular, monomorphisms are $-1$-truncated, and in a 1-category, ever morphism is at most 0-truncated.
\begin{example} 
    A few examples are \begin{enumerate}[label=(\alph*)]
        \item the 1-category $\var_k$ of varieties over a field $k$, with $E$ all morphisms, $I$ the class of open immersions and $P$ the class of proper morphisms,
        \item or more generally the 1-category $\textup{Sch}^{\textup{sf}}_S$ of qcqs schemes over a base scheme $S$ and morphisms between them, with $E$ the separated finite type morphisms, and with $I$ the class of open immersions and $P$ the class of proper morphisms,
        \item the 1-category of locally compact Hausdorff, with $E$ the class of all morphisms, $I$ the class of open immersions and $P$ the class of proper maps (\cite[Section 7]{scholze_notes}),
        \item the $\infty$-category of noetherian derived schemes and morphisms between them, $E$ the separated morphisms of almost finite type, $I$ the class of open immersions and $P$ the class of proper maps (\cite[Section 8.6]{scholze_notes}). We note that open immersions are $-1$-truncated, and proper maps are 0-truncated since their diagonal is a closed immersion and therefore $-1$-truncated.
    \end{enumerate}
    More examples can be found in in \cite{scholze_notes}.
\end{example}
\begin{rmk}
    In the rest of this paper, we will only consider Nagata setups of the form $(\C,All,I,P)$ where $E=All$ contains all morphisms in $\C$. However, we define Nagata setups in this generality, because in \cite{conjecture} we study properties of six-functor formalisms on Nagata setups where $E$ does not need to contain all morphisms in $\C$.
\end{rmk}

\begin{defn}[{\cite[Definition A.5.6]{mann_thesis}}]
    For $(\C,E)$ a geometric setup, a \textit{three-functor formalism} (sometimes called an abstract three-functor formalism) is is a lax cartesian structure (see \cite[Definition 2.4.1.1]{HA})
$$D: \corr({\C}, E)^\otimes \to \cat$$ 
\end{defn}

In other words, this is a functor such that for $(X_i)_I$ in ${\C^{\op,\sqcup,\op}}$, the natural map
$$D((X_i)_I) \to \prod_I D(X_i) $$
is an isomorphism. Such a functor encodes for every $X$ in $\C$ a category $D(X)$, for $f:X\to Y$ in $E$ a functor $f_!:D(X)\to D(Y)$, and for any morphism $g:X \to Y$ in $\C$ a functor $g^*:D(Y)\to D(X)$. Moreover, the natural morphism $X \to (X,X)$ in ${\C^{\op,\sqcup,\op}}$ induces a tensor product
$$-\otimes - :D(X)\times D(X) \to D(X)$$
that makes $D(X)$ into a symmetric monoidal $\infty$-category. For any morphism $g:X \to Y$ in $\C$, the functor $g^*:D(Y)\to D(X)$ is strong symmetric monoidal.

\begin{defn}[{\cite[Definition A.5.7]{mann_thesis}}]
    For $(\C,E)$ a geometric setup, a \textit{six-functor formalism} (sometimes called an abstract six-functor formalism) is a lax cartesian structure $$D: \corr({\C}, E)^\otimes\to \cat$$
such that for any morphism $f$ in $\C$, $f^*$ has a right adjoint $f_*$, $f_!$ has a right adjoint $f^!$; and for any $X$ in $\C$ and $A$ in $D(X)$, the functor
       $$A\otimes -:D(X) \to D(X)$$
     has a right adjoint $\hom(A,-)$.
\end{defn}

For basic properties of three- and six-functor formalisms, we refer to \cite[Proposition A.5.8]{mann_thesis}.
\begin{rmk}
  For $D$ a three-functor formalism and $f:X\to Y$ a morphism in $\C$, we see that $f^*:D(Y) \to D(X)$ is strong symmetric monoidal, essentially because the canonical square
  \begin{center}
      \begin{tikzcd}
          (Y,Y)& Y \arrow[l]\\
          (X,X) \arrow[u, "{(f,f)}" ] & X \arrow[u, "f"'] \arrow[l]
      \end{tikzcd}
  \end{center}
is a commutative square in $\C^{\op,\sqcup} \subseteq \corr(\C,E)$. Moreover, if $f$ is a monomorphism, then the square
\begin{center}
    \begin{tikzcd}
        X \arrow[d, "{f}"'] \arrow[r, "\Delta"] & X \times X \arrow[d, "{(f,f)}"] \\
        Y \arrow[r, "\Delta"] & Y\times Y
    \end{tikzcd}
\end{center}
is a pullback in $\C$, which implies that the square 
\begin{center}
    \begin{tikzcd}
        (X,X) \arrow[d, "{(f,f)}"'] & X \arrow[l] \arrow[d, "f"]\\
        (Y,Y) & Y \arrow[l]
    \end{tikzcd}
\end{center}
  is a commutative square in $\corr(\C,E)$. Therefore if $f$ is a monomorphism, then $f_!$ commutes with the tensor product. 
\end{rmk}

\subsection{Definition of Nagata six-functor formalisms}\label{subsect:nagata_6ff}

In this subsection, we fix a Nagata setup $(\C,All,I,P)$.  

\begin{defn}\label{defn:square_type}
    Let $X$ be a simplicial set let $A, B$ be collections of edges in $X$. Then a \textit{square of type $(A,B)$} in $X$ is a commutative square (see Definition \ref{defn:square})
    \begin{center}
        \begin{tikzcd}
            X \arrow[r, "f"]\arrow[d, "g"] & Y \arrow[d, "h"] \\
            Z \arrow[r, "j"]& V
        \end{tikzcd}
    \end{center}
    where the vertical edges $g$ and $h$ are in $A$, and the horizontal edges $f$ and $j$ are in $B$.  
\end{defn}

\begin{rmk} 
   Consider the collections of morphisms $I_-$ and $P_-$ in $\C^{\op,\sqcup,\op}$, and $I_-^\op$ and $P_\times^\op$ in $\C^{\op,\sqcup}$, see Notation \ref{nota:morphisms}. Recall that $\corr(\C)^\otimes$ contains the morphisms $All_-$ in $\C^{\op,\sqcup,\op}$, and that $\C^{\op,\sqcup}$ includes into $\corr(\C)^\otimes$. Therefore we can see $I_-,P_-,I_-^\op$ and $P_\times^\op$ as collections of morphisms in $\corr(\C)^\otimes$. 
   \begin{itemize}
       \item[(a)] Explicitly, for example, a morphism in $I_-\subseteq \corr(\C)^\otimes$ looks like a correspondence
$$(X_i)_I \xleftarrow{=}(X_i)_I\to (Y_i)_I$$
where the right-pointing morphism lies over $\textup{id}_I$ and the components $X_i\to Y_i$ are in $I$.
\item[(b)]  On the other hand, a morphism in $P_\times ^\op\subseteq \corr(\C)^\otimes$ looks like a correspondence
$$(X_i)_I\leftarrow (Y_j)_J \xrightarrow{=}(Y_j)_J $$
where the left-pointing morphism lies over a morphism $\alpha:I\to J$ in $\fin_\part$ and each map
$$Y_j \to \prod_{\alpha^{-1}(j)}X_i$$ is in $P$. 
   \end{itemize}
   
In the upcoming definition, we speak of squares of types $(I_-^\op, I_-)$, $(I_-^\op, P_\times^\op)$, $(P_-,P_\times^\op)$ and $(P_-,I_-)$ in $\corr(\C)^\otimes$.
\begin{itemize}
    \item[(c)] As an example, a square of type $(P_-,P_\times^\op)$ in $\corr(\C)^\otimes$ is a square of the form 
\begin{center}
    \begin{tikzcd}
        (X_i)_I \arrow[d] & (Y_j)_J\arrow[d] \arrow[l] \\
        (Z_i)_I & (V_j)_J \arrow[l]
    \end{tikzcd}
\end{center}
where vertical maps lie over identities in $\fin_\part$ and the horizontal maps over a map $\alpha:I\to J$ in $\fin_\part$, such that the components $X_i\to Z_i$ and $Y_j\to V_j$, as well as the induced maps 
$Y_j \to \prod_{\alpha^{-1}(j)} X_i$ and $V_j \to \prod_{\alpha^{-1}(j)} Z_i$ are in $P$.  Note that commutativity, as a square starting in $(X_i)_I$ and ending in $(V_j)_J$, implies that $(Y_j)_J$ is the pullback of the cospan
$(X_i)_I \to (Z_i)_I \leftarrow (V_j)_J.$
\end{itemize}
For $D:\corr(\C)^\otimes \to \cat$ a three-functor formalism, $D$ sends a square of type $(P_-,P_\times^\op)$ as above to the square
\begin{center}
    \begin{tikzcd}
        \prod_I D(X_i) \arrow[d] \arrow[r] & \prod_J D(Y_j) \arrow[d]\\
        \prod_I D(Z_i) \arrow[r] & \prod_J D(V_j)
        
    \end{tikzcd}
\end{center}
\end{rmk}

\begin{defn} \label{defn:newpre3ff}
 Given a Nagata setup $(\C,All,I,P)$, we denote by $\mathbf{3FF}\subseteq \fun(\corr(\C)^\otimes,\cat)$ the subcategory spanned by three-functor formalisms 
 $$D:{\corr(\C)^\otimes} \to \cat $$
 such that
  \begin{enumerate}

   \item[(A)]
$D$ sends squares of types  $(I_-^\op,I_-)$, $(I_-^\op,P_\times^\op)$, $(P_-,P_\times^\op)$ and $(P_-,I_-)$ to vertically left-adjoinable squares,
 \end{enumerate}
and by natural natural transformations $\alpha:D\to \tilde D$ such that for $f:X\to Y$ in $P$, the square
\begin{center}
        \begin{tikzcd}
        D(X) \arrow[r, "\alpha_X"] \arrow[d, "D_!(f)"'] & \tilde D(X) \arrow[d, "\tilde D_!(f)"]\\
        D(Y) \arrow[r, "\alpha_Y"'] & \tilde D(Y)
    \end{tikzcd}
\end{center}
is vertically left-adjoinable, and for $f:X\to Y$ in $I$, the square
\begin{center}
    \begin{tikzcd}
        D(Y) \arrow[d, "D^*(f)"' ]\arrow[r, "\alpha_Y"]& \tilde D(Y)\arrow[ d, "\tilde D^*(f)" ]
        \\
        D(X) \arrow[r, "\alpha_X" ] & \tilde D(X)
    \end{tikzcd}
\end{center}
is vertically left-adjoinable. We call objects in this category \textit{Nagata three-functor formalisms}

We denote by $\textup{\textbf{6FF}}\subseteq \mathbf{3FF}$ the full subcategory spanned by functors $$D:{\corr(\C)^\otimes} \to \cat$$
that are a six-functor formalism. We call such functors \textit{Nagata six-functor formalisms}.

\end{defn}

The following proposition shows that in a Nagata three-functor formalism we do indeed have Grothendieck and Wirthm\"uller contexts. 
\begin{prop}\label{prop:cohom_prop_etale}
    Let $(\C,All,I,P)$ be a Nagata setup, and let $D:{\corr(\C)^\otimes}  \to \cat$ in $\mathbf{3FF}$. Then for every $p$ in $P$ there is a canonical adjunction $p^ *\vdash p_!$, and for every $i$ in $I$ there is a canonical adjunction $i_!\vdash i^*$. 
\end{prop}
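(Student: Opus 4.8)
The plan is to read off the two adjunctions from the correspondence structure underlying $D$, using the adjointability hypothesis (A) to supply the data that a mere $(\infty,1)$-functor cannot see. Throughout, recall that for a morphism $f$ of $\C$ the functors $f^*$ and $f_!$ are the values of $D$ on the two spans $[f]^* = (Y \xleftarrow{f} X \xrightarrow{\id} X)$ and $[f]_* = (X \xleftarrow{\id} X \xrightarrow{f} Y)$, and that because $D$ is a functor on $\corr(\C)^{\op,\sqcup,\op}$, every pullback square in $\C$ automatically yields a base-change equivalence (composition of spans is computed by pullback). The two cases $p \in P$ and $i \in I$ are formally parallel, so I would treat the Wirthm\"uller case $i_! \dashv i^*$ first and then indicate the changes for $p^* \dashv p_!$.

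First I would produce the right-hand functor of each adjunction as an honest adjoint. Unwinding the definition of a vertically left-adjointable square, membership of $D$ in $\mathbf{3FF}$ forces the vertical functors of squares of type $(I_-^\op, I_-)$ and $(P_-, I_-)$ (respectively $(P_-, P_\times^\op)$ and $(I_-^\op, P_\times^\op)$) to admit left adjoints; applied to the degenerate squares built from a single $i \in I$ (resp. $p \in P$) together with identities, this already yields a left adjoint $L_i$ of $i^* = D[i]^*$ (resp. $L_p$ of $p_! = D[p]_*$). More functorially, condition (A) is exactly the adjointability needed to feed $D$ into the Partial Adjoint proposition of Liu--Zheng (our Lemma \ref{lem:functorial_PA}) and pass $i^*$ (resp. $p_!$) to its left adjoint together with all the coherence of the resulting adjunction.

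It then remains to identify these abstract adjoints with the correspondence-theoretic functors $i_!$ and $p^*$. For $i \in I$ this is where the monomorphism property enters: since $i$ is a monomorphism the diagonal $U \to U \times_X U$ is an equivalence, so the composite span $[i]^* \circ [i]_*$ is the identity span of $U$, and functoriality of $D$ gives a canonical equivalence $i^* i_! \simeq \id_{D(U)}$. Transporting this along the universal property of the left adjoint $L_i$ produces a canonical comparison $L_i \to i_!$, and I would check it is an equivalence by testing it against the adjointable (mate) squares of type $(I_-^\op, P_\times^\op)$, which pin down both functors by the same base-change data; this upgrades ``$i^*$ has a left adjoint'' to the asserted adjunction $i_! \dashv i^*$. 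For $p \in P$ the composite span is not the identity, so instead I would identify $L_p$ with $p^*$ by matching their mates: the left adjoint $L_p$ is characterized by base change against the squares of type $(P_-, I_-)$ and $(P_-, P_\times^\op)$, and $p^* = D[p]^*$ satisfies exactly these, the relevant squares being of the listed types precisely because of the stability and factorization axioms of a Nagata setup.

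The main obstacle is this identification together with the verification of the triangle identities: producing the left adjoints from (A) is essentially bookkeeping, but showing that the adjoint manufactured abstractly (or by Partial Adjoint) coincides canonically with $i_!$, respectively $p^*$, is the crux, and it is exactly here that the geometric hypotheses are used essentially --- the invertibility of the diagonal for $i \in I$, and properness repackaged through the Nagata factorization and the four families of adjointable squares for $p \in P$. I expect the cleanest route to be to phrase the whole argument through Lemma \ref{lem:functorial_PA}, so that the triangle identities are inherited from the Partial Adjoint proposition rather than checked by hand.
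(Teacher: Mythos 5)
Your first step is fine: applying condition (A) of Definition \ref{defn:newpre3ff} to the degenerate pullback squares (a single $I$- or $P$-morphism against identities) does produce left adjoints $L_i \dashv i^*$ and $L_p \dashv p_!$. The gap is in the identification step, and it sits exactly where the paper's essential idea lives. You assume that morphisms in $I$ are monomorphisms, so that the diagonal $U \to U\times_X U$ is an equivalence and the composite span computing $i^*i_!$ is the identity. But a Nagata setup (Definition \ref{defn:Nagata_setup}) only requires morphisms in $I$ and $P$ to be $n$-truncated for some finite $n$ (axiom (3)); monomorphy, i.e.\ $(-1)$-truncatedness, is \emph{not} an axiom (it appears only for the topological Nagata setups of the appendix). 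So your argument for $i\in I$ proves the statement only in a special case. Worse, for $p\in P$ --- where monomorphy can never be expected --- you have no actual argument: saying that $L_p$ and $p^*$ are both ``characterized by base change'' against the listed square types is not a proof, because two functors satisfying the same family of mate isomorphisms are not thereby canonically identified; that uniqueness claim would itself require an argument, and none is given.

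The paper supplies precisely the missing mechanism: induction on the truncation degree. If $f$ is $n$-truncated, its diagonal $\Delta_f$ is $(n-1)$-truncated and lies again in $P$ (resp.\ $I$) by the cancellation axioms (5)/(6) of Definition \ref{defn:Nagata_setup}; the base case is $f$ an isomorphism. In the inductive step, condition (A) is applied not to a degenerate square but to the self-pullback square of $f$, whose $D$-image commutes via $p_{1,!}\,p_2^* \simeq f^*f_!$, and the resulting Beck--Chevalley isomorphism is combined with the inductive identification for $\Delta_f$ to give, for $f\in P$,
\begin{equation*}
    f^* \;\simeq\; \Delta_f^\natural\, p_1^\natural\, f^* \;\simeq\; \Delta_f^\natural\, p_2^*\, f^\natural \;\simeq\; \Delta_f^*\, p_2^*\, f^\natural \;\simeq\; f^\natural,
\end{equation*}
identifying $f^*$ with the left adjoint $f^\natural$ of $f_!$; the case $f\in I$ is dual. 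Note that when $i$ \emph{is} a monomorphism this induction collapses to your sketch: the self-pullback square degenerates, and its mate isomorphism is exactly your comparison $L_i \to i_!$, so what you wrote is the mono special case of the paper's argument. Finally, Lemma \ref{lem:functorial_PA} cannot substitute for the induction: Partial Adjoint only passes functorially to \emph{abstract} adjoints, whereas the entire content of this proposition is identifying those abstract adjoints with the correspondence-theoretic functors $i_!$ and $p^*$, which is what the diagonal induction accomplishes.
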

\begin{proof}
Recall that all morphisms in $P$ and all morphisms in $I$ are $n$-truncated for some $n\geq -2$. We argue by induction over $n$. First, let $f:X\to Y$ be an $n$-truncated morphism in $P$.  
 \begin{itemize}
 \item[(a)] Suppose $n=-2$. In that case, $f$ is an isomorphism, and $f_!$ trivially has a left-adjoint $f^\natural$. The square
        \begin{center}
            \begin{tikzcd}
              {D}(X)\ar[d, "f_!"] \ar[r, "="]& {D}(X) \ar[d, "="] \\
                {D}(Y) \ar[r, "f^*"] &{D}(X)
            \end{tikzcd}
        \end{center}
is the image of a square of type $(P_-, P_\times^\op)$, and therefore by condition (A) of Definition \ref{defn:newpre3ff} it is vertically left-adjoinable, giving a natural isomorphism
$f^*\to f^* f_!f^\natural \cong f^\natural$  and hence an adjunction $f^ *\vdash f_!$.
        \item[(b)] Now suppose that for the $(n-1)$-truncated $P$- morphism $\Delta_f$, there is a natural isomorphism $\epsilon:\Delta_{f}^*\xrightarrow{\cong} \Delta_{f}^\natural$.  The square
\begin{center}
    \begin{tikzcd}
        D(X)\arrow[d, "f_!"] \arrow[r, "p_2^*"] & D(X \times_Y X) \arrow[d, " p_{1,!}"] \\
        D(Y)\arrow[r, "f^*"] & D(X)
    \end{tikzcd}
\end{center}
again the image of a square of type $(P_-,P_\times^\op)$ and therefore vertically left-adjoinable, giving a natural isomorphism 
$$f^* \cong \Delta_f^\natural p_1^\natural f^* \xrightarrow[BC]{\cong} \Delta_f^\natural p_2^* f^\natural \xrightarrow[\epsilon^{-1}]{\cong}\Delta_f^* p_2^* f^\natural \cong f^\natural $$ and hence an adjunction $f^ *\vdash f_!$.
\end{itemize}
Now let $f:X\to Y$ be an $n$-truncated morphism in $I$.
\begin{itemize}
        \item[(a)] Suppose $n=-2$. Again, in this case $f$ is an isomorphism, so
        $f^*$ has a canonical left adjoint $f_\natural$. The square
        \begin{center}
            \begin{tikzcd}
                D(X) \arrow[r, "f_!"] \arrow[d, "="] & D(Y) \arrow[d, "f^* "] \\
                D(X) \arrow[r, "="] & D(X)
            \end{tikzcd}
        \end{center} is the image of a square of type $(I_-^\op,I_-)$, and therefore by condition (A) in Definition \ref{defn:newpre3ff} it
is vertically left-adjoinable, giving a natural isomorphism 
$f_\natural = f_\natural f^* f_! \to f_!$ and hence an adjunction $f_!\vdash f^*$.

        \item[(b)] Suppose that for the $(n-1)$-truncated $I$-morphism $\Delta_f$, there is a natural isomorphism $\epsilon:\Delta_{f,\natural}\to\Delta_{f,!}$. Again, the square
\begin{center}
    \begin{tikzcd}
        D(X)\arrow[r, "f_!"] \arrow[d, "p^*_2"] & D(Y) \arrow[d, "f^*"] \\
        D(X\times_Y X)\arrow[r, "p_{1,!}"] & D(X)
    \end{tikzcd}
\end{center} is the image of a square of type $(I_-^\op, I_-)$ and therefore is vertically left-adjoinable, giving a natural isomorphism
$$f_\natural = f_\natural p_{1,!}\Delta_{f,!} \xrightarrow[BC]{\cong} f_!p_{2,\natural} \Delta_{f,!} \xrightarrow[\epsilon^{-1}]{\cong} f_!p_{2,\natural} \Delta_{f,\natural} = f_!$$
and hence an adjunction $f_!\vdash f^*$.
\end{itemize}
\end{proof}
\begin{rmk}
    For $D$ a Nagata six-functor formalism, the proposition above shows that for $p$ in $P$, there is a natural isomorphism $p_*\simeq p_!$, and for $i$ in $I$, there is a natural isomorphism $i^!\simeq i^*$.
\end{rmk}
\begin{rmk}
    In some sense, the reverse of Proposition \ref{prop:cohom_prop_etale} is true as well. Given any three-functor formalism on a Nagata setup, one can attempt to inductively define adjunctions $f^*\vdash f_!$ for $f$ in $P$, and $f_!\vdash f^!$ for $f$ in $I$, using that morphisms in $P$ and in $I$ are truncated. If such an adjunction can indeed be constructed inductively, then we say that $f$ is \textup{cohomologically proper} or \textit{cohomologically \'etale}, respectively (see also \cite[Lecture VI]{scholze_notes}). In \cite[corollary]{conjecture}, we show the reverse: if, in a three-functor formalism $D$, morphisms in $P$ are cohomologically proper and morphisms in $I$ are cohomologically \'etale, then $D$ is Nagata. This implies that in Definition \ref{defn:newpre3ff}, only half of (A) is necessary: in fact it suffices to ask that squares of types $(I_-^\op,I_-)$ and $(P_-,P_\times^\op)$ are sent to vertically left-adjoinable squares, since this implies the rest of condition (A). 
\end{rmk}

\subsection{Reducing to morphisms in $I$ and $P$}\label{subsect:proper_open}
We will now analyze the $\infty$-categories $\mathbf{3FF}$ and $\textup{\textbf{6FF}}$ further, proving our main result about Nagata six-functor formalisms, Proposition \ref{propx:intro0}. As an intermediate step, we proof the analogous statement, Proposition \ref{prop:3FF}, for Nagata three-functor formalisms.

Consider the embedding 
$$\corr(\C,P,I)^\otimes \to \corr(\C)^\otimes.$$
We note that morphisms in $P_\times^\op$ (including morphisms in $P_-^\op$) and $I_-$ in $\corr(\C)^\otimes$ are in the image of this embedding; we can therefore consider these classes of morphisms in $\corr(\C,P,I)^\otimes$ too. Concretely, a morphism
$$ (X_i)_I \leftarrow (Y_j)_J \xrightarrow{=}(Y_j)_J$$
in $P_\times$ is a 1-simple morphism in $\corr(\C,P,I)^\otimes$, whereas a morphism 
$$(X_i)_I \xleftarrow{=}(X_i)_I \to (Y_i)_I $$
in $I_-$ is a 2-simple morphism in $\corr(\C,P,I)^\otimes$ (see Definition \ref{defn:simple_edge_in_corr_cat}.

Therefore we can speak of squares of types $(I_-,I_-)$, $(I_-,P_\times^\op)$, $(P_-^ \op,P_\times^\op)$ and $(P_-^\op,I_-)$ in $\corr(\C,P,I)^\otimes$. Note that these are not the same as the types of squares considered in Definition \ref{defn:newpre3ff}, as the direction of the vertical morphisms is reversed (the types of squares considered there are not in $\corr(\C,P,I)^\otimes$, since $\corr(\C,P,I)^\otimes$ does not contain morphisms in $I_-^\op$ or $P_-$).

\begin{defn}\label{defn:bc_span}
    Let $$\fun^\lax_{BC}(\corr(\C,P,I)^\otimes,\cat)\subseteq \fun(\corr(\C,P,I)^\otimes,\cat)$$ denote the subcategory spanned by lax cartesian structures
    $$F:\corr(\C,P,I)^\otimes \to \cat$$ such that
    \begin{enumerate}
        \item[(B)] $F$ send all squares in $\corr(\C,P,I)^\otimes$ of type $(I_-,I_-)$, $(I_-,P_\times^\op)$, $(P_-^ \op,P_\times^\op)$ and $(P_-^\op,I_-)$ to vertically right-adjoinable squares
    \end{enumerate}
     and by natural transformations $\alpha:F \to G$ that are compatible with the right adjoints, in other words, such that for $p:X\to Y$ in $P$, the square  
\begin{equation*}
        \begin{tikzcd}
            F(Y) \arrow[r, "\alpha_{Y}"] \arrow[d, "F(p)"'] & G(Y) \arrow[d, "G(p)"] \\
            F(X) \arrow[r, "\alpha_{X}"] & G(X)          
        \end{tikzcd}
    \end{equation*}
    is vertically right-adjoinable, and for $i:U\hookrightarrow X$ in $I$, the square 
\begin{equation*}
        \begin{tikzcd}
            F(U) \arrow[r, "\alpha_{U}"] \arrow[d, "F(i)"'] & G(U) \arrow[d, "G(i)"] \\
            F(X) \arrow[r, "\alpha_{X}"] & G(X)          
        \end{tikzcd}
    \end{equation*}
    is vertically right-adjoinable. We will call this the category of \textit{BC functors}, or \textit{functors with the BC-property}, where BC stands for Beck-Chevalley. 
\end{defn}
A functor $F$ in $\fun^\lax_{BC}(\corr(\C,P,I)^\otimes,\cat)$ can be thought as as assigning to every $X$ in $\C$ an $\infty$-category $F(X)$, and to every $P$-morphism an adjunction $p^*\vdash p_\natural$ and every $I$-morphism an adjunction $i_!\vdash i^\natural$. Proposition \ref{prop:w6FF} will imply that $F(X)$ is in fact a symmetric monoidal $\infty$-category. Moreover, condition (B) ensures that $(-)_!$ and $(-)_\natural$ are compatible and extend to a functor on $\C$, and similarly that $(-)^*$ and $(-)^\natural$ are compatible and extend to a functor on $\C^\op$. The latter uses that any morphism in $\C$ factors as a $I$-morphism followed by a $P$-morphism. 

The claims above are made precise in the following proposition, using the ideas in the proof of \cite[Proposition A.5.10]{mann_thesis}, and machinery developed by Liu and Zheng in \cite{liu_zheng_six_ops}.

\begin{prop}\label{prop:3FF}
Restricting along the embedding $e:\corr(\C,P,I)^\otimes \hookrightarrow {\corr(\C)^\otimes}$ induces an equivalence of $\infty$-categories 
$$e^*:  \mathbf{3FF} \xrightarrow{\sim} \fun^\lax_{BC}(\corr(\C,P,I)^\otimes,\cat).$$
\end{prop}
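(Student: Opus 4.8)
The plan is to prove that $e^*$ is an equivalence by exhibiting an explicit inverse, assembled through iterated applications of Functorial Partial Adjoint (Lemma~\ref{lem:functorial_PA}), in the spirit of the proof of \cite[Proposition A.5.10]{mann_thesis} and the multisimplicial machinery of \cite{liu_zheng_gluing,liu_zheng_six_ops}.

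First I would check that $e^*$ is well defined. Restricting a Nagata three-functor formalism $D$ along $e$, the $P$-simple and $I$-simple edges of $\delta^*_+\C^{\op,\sqcup,\op}(P_\times^\op,I_-)$ are sent to $p^*$ and $i_!$; by Proposition~\ref{prop:cohom_prop_etale} these are left adjoints, with right adjoints $p_!$ and $i^*$ supplied by the Grothendieck and Wirthm\"uller contexts, so the simple-edge clause of condition (B) holds. The square clause of (B) then follows from condition (A): the four square-types of (B) are, after passing to these adjoints along the appropriate edges, the $e$-images of the four square-types of (A), and the mate calculus trades the vertical left-adjointability of (A) for the vertical right-adjointability of (B) (the accompanying $\op$-flip of the classes $I_-^\op \leftrightarrow I_-$, $P_- \leftrightarrow P_-^\op$ being exactly the reorientation that this exchange requires). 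The two natural-transformation compatibility conditions match in the same way. Finally, restriction is automatically compatible with the lax cartesian structure, which is a pointwise condition on $\sqcup$-products inherited by the sub-nerve. Hence $e^*$ lands in $\fun^\lax_{BC}$.

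For the inverse I would connect $\delta^*_+\C^{\op,\sqcup,\op}(P_\times^\op,I_-)$ to $\corr(\C)^{\op,\sqcup,\op}=\delta^*_+\C^{\op,\sqcup,\op}(All_\times^\op,All_-)$ by a finite zig-zag of multisimplicial nerves, applying Functorial Partial Adjoint at each link. Starting from $F$ (the data of $p^*$ and $i_!$): first pass to adjoints in the $P$-direction, which is legitimate because condition (B) makes the $P$-simple edges left adjoints and the $(P_-^\op,-)$ squares vertically right-adjointable; this produces the functorial family $p_!=(p^*)^R$, and, via the Nagata factorization $g=p\circ i$ (axiom (4) of Definition~\ref{defn:Nagata_setup}) together with the now-adjointable mixed squares as coherent gluing data, assembles the covariant part $g\mapsto g_!$ on all of $\C$. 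Dually, passing to adjoints in the $I$-direction yields $i^*=(i_!)^R$ and assembles the contravariant part $f\mapsto f^*$. At each application the ambient functor $\infty$-category is replaced by an equivalent one by Lemma~\ref{lem:functorial_PA}, whose compatibility-with-right-adjoints clause is precisely what matches the prescribed natural transformations. Composing these equivalences produces a lax cartesian structure on $\corr(\C)^{\op,\sqcup,\op}$ restricting to $F$, and an inverse to $e^*$; that the output satisfies condition (A) and is genuinely a three-functor formalism is read off from the square-types rendered adjointable along the way.

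I expect the main obstacle to be the coherent gluing underlying the two partial-adjoint steps: producing a single $\infty$-functor on $\corr(\C)$ out of $f^*$ and $g_!$, which are defined only through \emph{non-canonical} factorizations $f=p\circ i$, in such a way that composition (given in $\corr(\C)$ by pullback) and all higher simplices are respected. The very reason for routing everything through the multisimplicial nerves and Lemma~\ref{lem:functorial_PA} is that ``passing to adjoints'' is already functorial there, so no composite is ever checked by hand; the genuine work is to verify that the chain of nerves linking the span-nerve to the full correspondence nerve is built from exactly the edge-classes $P$, $I$ and the four square-types of condition (B) that the partial-adjoint hypotheses demand at each stage. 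This rests on the truncatedness axiom (3), which drives the inductive construction of the adjunctions in Proposition~\ref{prop:cohom_prop_etale}, and on the pullback-stability of $I$ and $P$ from the geometric-setup axioms. The symmetric-monoidal bookkeeping carried by the $\sqcup$-indices — and encoded in the distinction between $P_\times^\op$ and $P_-^\op$ in the square-types, which records the projection formula — must be tracked consistently throughout, but does not change the essential argument.
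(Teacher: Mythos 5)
Your proposal is correct and follows essentially the same route as the paper: the paper establishes well-definedness of $e^*$ via Lemma~\ref{lem:nog_meer_adjointable_squares} (the mate-calculus content you describe, derived there from Proposition~\ref{prop:cohom_prop_etale} and the Partial Adjoint machinery) and builds the inverse as $(\phi^*)^{-1}\circ PA\circ \psi^*$, a zig-zag through the four-fold nerves $\delta^*\C^{\op,\sqcup,\op}(P_\times^\op,I_-^\op,I_-,P_-)$ and $\delta^*\C^{\op,\sqcup,\op}(P_\times^\op,I_-,I_-,P_-^\op)$ using the Liu--Zheng comparison theorems and Functorial Partial Adjoint, exactly the chain of nerves you anticipate. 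The only cosmetic difference is that you describe two successive partial-adjoint passes (first the $P$-direction, then the $I$-direction), whereas the paper doubles each direction in the four-fold nerve and flips both flavors in a single application of Lemma~\ref{lem:functorial_PA} (Lemmas~\ref{lem:w6FF_step_3}, \ref{lem:w6FF_step_2.1}, \ref{lem:w6FF_step_1}).
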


We will prove Proposition \ref{prop:3FF} by means of a chain of three equivalences, which are treated in Lemma \ref{lem:w6FF_step_3}, Lemma \ref{lem:w6FF_step_2.1} and Lemma \ref{lem:w6FF_step_1}. 

In the first Lemma, we consider the simplicial set $\delta_4^*\C^{\op,\sqcup,\op}(P_\times^\op,I_-^\op, I_-, P_-)$. Recall that its 1-simplices are 4-dimensional hypercubes in $\C^{\op,\sqcup,\op}$ with cartesian faces, where the morphisms in the first direction are in $P_\times^\op$, the morphisms in the second direction in $I_-^\op$, the third direction in $I_-$ and the fourth direction in $P_-$. Simple edges are hypercubes with identities in all but one of the directions. Therefore any morphism in $P_\times^\op$ has a corresponding 1-simple edge in $\delta_4^*\C^{\op,\sqcup,\op}(P_\times^\op,I_-^\op, I_-, P_-)$, a morphism in $I_-^\op$ corresponds to a 2-simple edge, and so on. Note that a morphism in $P_\times^\op \cap I_-^\op$ can be 1-simple or 2-simple as an edge in $\delta_4^*\C^{\op,\sqcup,\op}(P_\times^\op,I_-^\op, I_-, P_-)$, and a morphism in $I_-\cap P_-$ can be either 3-simple or 4-simple.

\begin{nota}
Let $\delta_k^*\C(I_1^{\epsilon_1},\dots,I_k^{\epsilon_k})$ be as in Definition \ref{defn:simple_edge}. Given a set of morphisms $A$ in $\C$, we denote by $A^i$ the set of $i$-simple morphisms in $\delta_k^*\C(I_1^{\epsilon_1},\dots,I_k^{\epsilon_k})$ for which the morphisms in the $i$th dimension are in $A$. By abuse of notation, we denote by $A$ also the set of simple morphisms $A_1\cup\dots \cup A_k$. 
\end{nota}
Thus, we have sets of morphisms $P_\times^{\op,1}$, $I_-^{\op,2}$, $I_-^3$ and $P_-^4$ in $\delta_4^*\C^{\op,\sqcup,\op}(P_\times^\op,I_-^\op, I_-,P_-).$ 

\begin{defn}\label{defn:last_BC}
    Let $$\fun_{BC}^\lax(\delta_4^*\C^{\op,\sqcup,\op}(P_\times^\op,I_-^\op, I_-,P_-), \cat)$$
    denote the subcategory of $\fun(\delta_4^*\C^{\op,\sqcup,\op}(P_\times^\op,I_-^\op, I_-,P_-), \cat)$ spanned by functors $F$ that send squares of types
   $(I_-^{\op,2},I_-^3)$, $(I_-^{\op,2},P_\times^{\op,1})$, $(P_-^4,P_\times^{\op,1})$ and $(P_-^4,I_-^3)$ to vertically left-adjoinable squares, and for which the natural morphisms
   \begin{equation} \label{eq:lax_cartesian_structure}
      F((X_i)_I)\to \prod_I F(X_i) 
   \end{equation}
   for $(X_i)_I$ in $\C^{\op,\sqcup,\op}$ are isomorphisms; and by natural transformations $\alpha:F \to G$ that are compatible with the left adjoints of images of $P_\times^{\op,1}$- and $I_-^{\op,2}$-morphisms, cf. Definition \ref{defn:newpre3ff}.
\end{defn}

\begin{lem}\label{lem:w6FF_step_3}
    There is an equivalence of categories
    $$ \psi^*: \mathbf{3FF} \xlongrightarrow{\sim} \fun^\lax_{BC}(\delta_4^*\C^{\op,\sqcup,\op}(P_\times^\op,I_-^\op, I_-,P_-), \cat).$$ 
\end{lem}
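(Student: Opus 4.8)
The plan is to prove Lemma~\ref{lem:w6FF_step_3} by exhibiting the equivalence as an instance of the Functorial Partial Adjoint machinery (Lemma~\ref{lem:functorial_PA}) applied to the four-fold cartesian multisimplicial nerve. The key observation is that the source $\mathbf{3FF}$ and the target $\fun^\lax_{BC}(\delta^*\C^{\op,\sqcup,\op}(P_\times^\op,I_-^\op,I_-,P_-),\cat)$ are related by ``passing to adjoints'' on exactly two of the four simplicial directions: a Nagata three-functor formalism records $f_!$ for $P$-morphisms and $g^*$ for all morphisms, and by Proposition~\ref{prop:cohom_prop_etale} these come equipped with the \emph{further} adjoints $p^*\vdash p_!$ for $p\in P$ and $i_!\vdash i^*$ for $i\in I$. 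The target category instead records all four flavors of simple edge separately---the $P_\times^\op$ and $I_-$ directions (the covariant/left-adjoint data) together with the $I_-^\op$ and $P_-$ directions (the extra adjoints). So morally this lemma is the statement that storing a Nagata three-functor formalism is the same as storing its restriction to simple edges together with all the adjoints guaranteed to exist, organized into one four-directional nerve.

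First I would identify $\mathbf{3FF}$, which lives inside $\fun(\corr(\C)^{\op,\sqcup,\op},\cat)$, with a category of functors out of a multisimplicial nerve built only from simple edges. The point is that $\corr(\C)^{\op,\sqcup,\op}$ is, up to the identifications recorded after Notation~\ref{nota:corr}, assembled from the classes $All_\times^\op$ and $E_-=All_-$; but a three-functor formalism only uses the $P$-part of $(-)_!$ in an adjointable way and is a lax cartesian structure, so restricting to the relevant simple edges loses no information. Concretely I would first restrict a Nagata three-functor formalism to the subcategory generated by the simple edges in directions $P_\times^\op$, $I_-$, and record the left-adjointability condition (A) as precisely the hypothesis needed to run Partial Adjoint in the two directions $I_-^\op$ and $P_-$. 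The lax-cartesian (product-preservation) condition is preserved throughout and matches condition~(\ref{eq:lax_cartesian_structure}) in Definition~\ref{defn:last_BC}.

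The heart of the argument is then a bookkeeping translation between the adjointability hypotheses. In $\mathbf{3FF}$, condition (A) asks that squares of types $(I_-^\op,I_-)$, $(I_-^\op,P_\times^\op)$, $(P_-,P_\times^\op)$, $(P_-,I_-)$ go to vertically left-adjointable squares; Proposition~\ref{prop:cohom_prop_etale} supplies the existence of the left adjoints of $i^*$ (namely $i_!$, already present) and of $f_!$ for $p\in P$ (namely $p^*$, already present) as well as the adjoints needed in the crossed directions. I would set $J=\{I_-^\op\text{-direction},\,P_-\text{-direction}\}\subseteq I$ (the index set of the four-fold nerve) and verify that $\mathbf{3FF}$, viewed through its restriction to the two ``left'' directions $P_\times^\op, I_-$, satisfies exactly the hypotheses of $\fun_{J,\textup{left}}$; applying $PA$ then produces a functor on the four-direction nerve whose values on the $I_-^\op$- and $P_-$-simple edges are the corresponding adjoints, and whose adjointability of the four square-types is exactly the $\fun_{BC}^\lax$ condition of Definition~\ref{defn:last_BC}. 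The natural-transformation clause matches because the adjointability condition on $\alpha$ in Definition~\ref{defn:newpre3ff} is precisely the condition imposed by Lemma~\ref{lem:functorial_PA} on morphisms.

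The main obstacle I anticipate is not conceptual but organizational: carefully matching the four square-types and their adjointability directions on \emph{both} sides of the equivalence, since Partial Adjoint flips a square's adjointability between left and right when one passes to adjoints in the vertical direction, and one must check that the square-types listed in Definition~\ref{defn:newpre3ff}(A) transform into exactly those listed in Definition~\ref{defn:last_BC} under this flip, with the correct handedness (left-adjointable becoming the appropriate adjointable condition after the operation $\op^I_J$). I would handle this by treating the four square-types one at a time, tracking which two directions are being adjointed and confirming via the description of $\op^I_J R$ (recalled just before Lemma~\ref{lem:functorial_PA}) that the resulting multisimplicial nerve is indeed $\C^{\op,\sqcup,\op}(P_\times^\op,I_-^\op,I_-,P_-)$. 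Since $PA$ is an equivalence with an explicit inverse $PA^{-1}$, once the dictionary is pinned down the equivalence of the two functor categories follows formally, and the lax-cartesian condition passes through untouched because $PA$ acts only on the adjointed directions and leaves the product-preservation structure intact.
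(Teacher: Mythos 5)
Your proposal misidentifies what this lemma actually says, and the strategy you build on that misreading cannot be carried out. The two sides of the equivalence in Lemma \ref{lem:w6FF_step_3} record \emph{the same} functors; no adjoints are passed anywhere in this step. In the target nerve $\delta^*\C^{\op,\sqcup,\op}(P_\times^\op,I_-^\op, I_-,P_-)$, the four directions record $p^*$, $i^*$, $i_!$ and $p_!$ respectively, and all four of these are restrictions of the data already present in $D\in\mathbf{3FF}$ (namely $f^*$ for backwards morphisms and $g_!$ for forwards morphisms in $\corr(\C)$, restricted to the classes $P$ and $I$); the $I_-^\op$ and $P_-$ directions are \emph{not} ``the extra adjoints'' of the other two directions, even though, a posteriori, condition (A) together with Proposition \ref{prop:cohom_prop_etale} makes them canonically adjoint to them. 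The actual content of the lemma is a decomposition statement: every backwards morphism factors as $f^*=i^*p^*$ and every forwards morphism as $g_!=p_!i_!$ via the Nagata factorization, and the paper turns this into a categorical equivalence of simplicial sets $\psi:\delta^*\C^{\op,\sqcup,\op}(P_\times^\op,I_-^\op, I_-,P_-)\to \corr(\C)^{\op,\sqcup,\op}$ by applying \cite[Theorem 5.4]{liu_zheng_gluing} twice (once to each of the two factorizations) and \cite[Theorem 4.27]{liu_zheng_gluing}; the equivalence of functor categories is then restriction $\psi^*$, after checking that $\psi^*$ preserves and reflects the lax-cartesian and adjointability conditions. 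The Functorial Partial Adjoint machinery enters only in the \emph{next} step (Lemma \ref{lem:w6FF_step_2.1}), where the directions $(I_-^\op,P_-)$ are converted to $(I_-,P_-^\op)$.

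Beyond the misreading, your plan has a structural flaw that would stop any attempted execution. First, $PA$ takes functors on an $I$-indexed multisimplicial nerve to functors on $\op^I_J R$, which has the \emph{same} index set with the $J$-directions reversed; it cannot create new directions. So ``restrict to the two directions $P_\times^\op, I_-$ and then apply $PA$ in the directions $I_-^\op$ and $P_-$'' is not an operation that exists: those two directions are absent from the restricted object. Second, even setting that aside, applying $PA$ requires the source to already be presented as a functor category on a multisimplicial nerve, whereas $\mathbf{3FF}$ is a priori a category of functors on $\corr(\C)^{\op,\sqcup,\op}$; producing such a presentation is precisely what Lemma \ref{lem:w6FF_step_3} accomplishes, so your argument presupposes its own conclusion. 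In particular, your proposed ``first step'' of identifying $\mathbf{3FF}$ with functors built from the simple edges in directions $P_\times^\op, I_-$ is essentially the statement of Proposition \ref{prop:3FF} itself, the result toward which this lemma is only the first of three steps. What is genuinely needed, and missing from your proposal, is the factorization-based gluing input from \cite{liu_zheng_gluing}.
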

\begin{proof}
There is a categorical equivalence
    $$\psi_0: \delta_4^*\C^{\op,\sqcup,\op}(P_\times^\op,I_-^\op, I_-,P_-) \xlongrightarrow{\sim} \delta_2^*\C^{\op,\sqcup,\op}(All_\times^\op, All_-)$$
 This can be seen by applying \cite[Theorem 1.5.4]{liu_zheng_six_ops} twice, once with respect to the sets of morphisms $P_\times^{\op,1}, I_-^{\op,2}$ and  once with respect to the sets of morphisms $I_-^3,P_-^4$.  
 To check condition 1 in \cite[Theorem 1.5.4]{liu_zheng_six_ops}, let $f:(X_i)_I\to (Y_j)_J $ be a morphism in $All_\times$ over a partial map $\alpha:J \dashrightarrow I$, given by maps $f:X_i \to \prod_{\alpha^{-1}(i)} Y_i$. By assumption every $f_i$ can be factored as a map in $I$ followed by a map in $P$  
 $$X_i \hookrightarrow Z_i \twoheadrightarrow \prod_{\alpha^{-1}(i)} Y_i,$$
 and therefore $f$ factors as a map $(X_i)_I\hookrightarrow (Z_i)_I$  in $I_-$ and a map $(Z_i)_I\twoheadrightarrow (Y_j)_J$ in $P_\times$. Similarly, every map in $All_-$ factors as a map in $I_-$ followed by a map in $P_-$. The other conditions follow directly from the assumptions on $I$ and $P$.

Using \cite[Theorem 1.4.26]{liu_zheng_six_ops}, we see that there is a categorical equivalence of simplicial sets 
$$\psi_1:\delta_2^*\C^{\op,\sqcup,\op}(All_\times^\op, All_-) \to \corr(\C)^\otimes$$
It follows that $\psi:=\psi_1\circ\psi_0$ induces an equivalence of $\infty$-categories 
\begin{equation}\label{eq:equivalence_pre6ff_step_3}
    \psi^*:\fun({\corr(\C)^\otimes}, \cat) \xlongrightarrow{\sim} \fun(\delta_4^*\C^{\op,\sqcup,\op}(P_\times^\op,I_-^\op, I_-,P_-), \cat).
\end{equation}
It is clear that  $\psi^*$ preserves and reflects the existence of equivalences (\ref{eq:lax_cartesian_structure}). We observe that $\psi$ takes squares of types  $(I_-^{\op,2},I_-^3)$, $(I_-^{\op,2},P_\times^{\op,1})$, $(P_-^4,P_\times^{\op,1})$ and $(P_-^4,I_-^3)$ to squares of types  $(I_-^\op,I_-)$, $(I_-^\op,P_\times^\op)$, $(P_-,P_\times^\op)$ and $(P_-,I_-)$, and vice versa, every square of e.g. type $(I_-^\op,I_-)$ is the image of a unique square of type $(I_-^{\op,2},I_-^3)$, etc. Therefore $D$ in $\fun({\corr(\C)^{\op,\sqcup,\op}}, \cat)$ sends squares of types  $(I_-^\op,I_-)$, $(I_-^\op,P_\times^\op)$, $(P_-,P_\times^\op)$ and $(P_-,I_-)$ to vertically left-adjoinable squares if and only if $\psi^*D$ sends squares of types $(I_-^{\op,2},I_-^3)$, $(I_-^{\op,2},P_\times^{\op,1})$, $(P_-^4,P_\times^{\op,1})$ and $(P_-^4,I_-^3)$ to vertically left-adjointable squares. Similarly, since every morphism in $I_-^{\op, 2}$ (respectively $P_-^4$) in $\corr(\C)^\otimes$ is the image of an $I_-^\op$-morphism (respectively a $P_-$-morphism) in  $\delta_4^*\C^{\op,\sqcup,\op}(P_\times^\op,I_-^\op, I_-,P_-)$ , and therefore a morphism of three-functor formalisms is compatible with left adjoints if and only if $\psi^*D$ is. This shows that $\psi^*$ restricts to an equivalence  
$$\psi^ *:  \mathbf{3FF} \xlongrightarrow{\sim} \fun^\lax_{BC}(\delta_4^*\C^{\op,\sqcup,\op}(P_\times^\op,I_-^\op, I_-,P_-), \cat).$$ \end{proof}

In the next step we change the variance of some of the inputs of the functors, by passing to adjoints. First we introduce the following auxiliary $\infty$-category.

\begin{defn}\label{defn:BC_functor_cat}
   Let $$\fun^\lax_{BC}(\delta_4^*\C^{\op,\sqcup,\op}(P_\times^\op,I_-, I_-, P_-^\op),\cat)$$ denote the $\infty$-category spanned by functors $F:\delta_4^*\C^{\op,\sqcup,\op}(P_\times^\op,I_-, I_-, P_-^\op)\to \cat$ that send all squares of types $(I_-^2,I_-^3)$, $(I_-^2,P_\times^{\op,1})$, $(P_-^{\op,4},P_\times^{\op,1})$ and $(P_-^{\op,4},I_-^3)$ to vertically right-adjoinable squares, and for which the morphisms (\ref{eq:lax_cartesian_structure}) are equivalences; and by natural equivalences between them that are compatible with the right adjoints, cf. Definition \ref{defn:bc_span}. 
   \end{defn}

\begin{lem}\label{lem:w6FF_step_2.1}
There is an equivalence of $\infty$-categories
$$PA:  \fun^\lax_{BC}(\delta_4^*\C^{\op,\sqcup,\op}(P_\times^\op,I_-^\op, I_-,P_-), \cat) \xlongrightarrow{\sim} \fun^\lax_{BC}(\delta_4^*\C^{\op,\sqcup,\op}(P_\times^\op,I_-, I_-,P_-^\op), \cat).$$ 
\end{lem}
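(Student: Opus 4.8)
The plan is to apply the Functorial Partial Adjoint equivalence from Lemma \ref{lem:functorial_PA} to pass from the variance pattern $(P_\times^\op, I_-^\op, I_-, P_-)$ to the pattern $(P_\times^\op, I_-, I_-, P_-^\op)$. Concretely, the two simplicial sets differ only in dimensions $2$ and $4$: in the source the second dimension uses $I_-^\op$ and the fourth uses $P_-$, while in the target the second uses $I_-$ and the fourth uses $P_-^\op$. Setting $R = \C^{\op,\sqcup,\op}(P_\times^\op, I_-, I_-, P_-^\op)$ as the $4$-simplicial set with $I = \{1,2,3,4\}$, the target simplicial set is $\delta^* R$ and the source is $\delta^* \op^I_J R$ for $J = \{2,4\}$, since $\op^I_J$ reverses exactly the morphism-classes in dimensions $2$ and $4$. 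Then Lemma \ref{lem:functorial_PA} provides an equivalence
$$PA : \fun_{J,\textup{left}}(\delta^* R, \cat) \xrightarrow{\sim} \fun_{J,\textup{right}}(\delta^* \op^I_J R, \cat),$$
and the work is to match each of these two functor categories with the BC-functor categories appearing in the statement.

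First I would verify the bookkeeping on the source side. The morphisms I am passing to adjoints sit in dimensions $2$ (class $I_-$, to be reversed to $I_-^\op$) and $4$ (class $P_-^\op$, to be reversed to $P_-$). By Proposition \ref{prop:cohom_prop_etale}, in a three-functor formalism the functors $i^*$ for $i \in I$ have left adjoints $i_!$ and $p^*$ for $p \in P$ have left adjoints (equivalently $p_!$), so passing to left adjoints in these dimensions is exactly the content that the hypotheses of Functorial Partial Adjoint require. Thus I must check that the condition ``$F$ sends $2$-simple and $4$-simple edges to left adjoints'' together with ``$F$ sends the mixed squares to vertically right-adjointable squares'' matches the defining conditions of $\fun^\lax_{BC}(\delta^*\C^{\op,\sqcup,\op}(P_\times^\op,I_-, I_-,P_-^\op), \cat)$ from Definition \ref{defn:BC_functor_cat}, and dually that the right-adjoint version matches the conditions defining the source category of Lemma \ref{lem:w6FF_step_3}. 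The key translation is that a square of type $(I_-^\op,I_-)$ being sent to a \emph{left}-adjointable square on the source corresponds, after passing to adjoints, to the square of type $(I_-^2, I_-^3)$ being sent to a \emph{right}-adjointable square on the target; this is precisely the variance flip that Functorial Partial Adjoint encodes at the level of squares. I would go through all four square-types listed in each definition and confirm they correspond under the dimension relabeling $\{(P_\times^\op, I_-^\op, I_-, P_-) \leftrightarrow (P_\times^\op, I_-, I_-, P_-^\op)\}$.

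The main obstacle, and the step requiring genuine care, is the \emph{morphism} side: matching the conditions on natural transformations in the two BC-functor categories. In the source category the transformations are ``compatible with the left adjoints'' (as in Definition \ref{defn:newpre3ff}), meaning certain naturality squares for $p \in P$ and $i \in I$ are vertically left-adjointable; in the target they must be ``compatible with the right adjoints'' (as in Definition \ref{defn:bc_span}). I would argue that Functorial Partial Adjoint, being an equivalence of functor \emph{categories} (not merely a bijection on objects), automatically carries the left-adjointability condition on transformation-squares in the $J$-dimensions to the corresponding right-adjointability condition, since a natural transformation is precisely a functor out of the product with $\Delta^1$ and the $0$-th dimension is the one treated by $\op^I_J$ as in the proof of Lemma \ref{lem:functorial_PA}. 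The delicate point is confirming that the specific squares appearing in Definitions \ref{defn:newpre3ff} and \ref{defn:bc_span} — those indexed by $p \in P$ and by $i \in I$ — are exactly the $j$-simple transformation-squares for $j \in J$ to which Lemma \ref{lem:functorial_PA} applies its adjointability clause. Once this identification is pinned down, the lemma follows immediately by invoking $PA$ and its inverse $PA^{-1}$, and the lax-cartesian (product-preserving) condition (\ref{eq:lax_cartesian_structure}) is preserved in both directions because passing to adjoints in dimensions $2,4$ does not touch the diagonal/product structure recorded in dimension $1$.
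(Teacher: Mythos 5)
Your proposal is correct and takes essentially the same route as the paper: the paper's proof likewise consists of applying the Functorial Partial Adjoint equivalence (Lemma \ref{lem:functorial_PA}) in dimensions $J=\{2,4\}$ and observing that $PA$ and $PA^{-1}$ preserve the lax cartesian condition (\ref{eq:lax_cartesian_structure}), with your matching of square-types and transformation conditions being exactly the (implicit) bookkeeping. One small slip worth fixing: $p_!$ is the \emph{right} adjoint of $p^*$ (so in the pattern $(P_\times^\op,I_-^\op,I_-,P_-)$ it is the dimension-$4$ edges $p_!$ that acquire left adjoints $p^*$), and the appeal to Proposition \ref{prop:cohom_prop_etale} is unnecessary, since the adjointability hypotheses required by $PA$ are already encoded in the adjointable-square conditions of Definitions \ref{defn:last_BC} and \ref{defn:BC_functor_cat}.
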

\begin{proof}
This follows directly from Lemma \ref{lem:functorial_PA}, where we observe that both $PA$ and $PA^{-1}$ preserve the existence of isomorphisms (\ref{eq:lax_cartesian_structure}).
\end{proof}

For the final equivalence, we apply the same results from \cite{liu_zheng_six_ops} as before. 

\begin{lem}\label{lem:w6FF_step_1} 
    There is an equivalence of $\infty$-categories 
\begin{equation*}
 \phi^*:\fun_{BC}^\lax(\corr(\C,P,I)^\otimes ,\cat) \xlongrightarrow{\sim} \fun^\lax_{BC}(\delta_4^*\C^{\op,\sqcup,\op}(P_\times^\op,I_-, I_-, P_-^\op) ,\cat)
\end{equation*}     
\end{lem}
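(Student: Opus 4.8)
The plan is to mirror the proof of Lemma \ref{lem:w6FF_step_3}. I will first construct a categorical equivalence of the underlying simplicial sets
$$\phi:\delta^*\C^{\op,\sqcup,\op}(P_\times^\op,I_-,I_-,P_-^\op)\xlongrightarrow{\sim}\delta^*_+\C^{\op,\sqcup,\op}(P_\times^\op,I_-),$$
and then show that precomposition $\phi^*$ restricts to the claimed equivalence of BC-subcategories. Since $\phi$ will be a categorical equivalence, $\phi^*$ is already an equivalence $\fun(\delta^*_+\C^{\op,\sqcup,\op}(P_\times^\op,I_-),\cat)\xrightarrow{\sim}\fun(\delta^*\C^{\op,\sqcup,\op}(P_\times^\op,I_-,I_-,P_-^\op),\cat)$, so all that remains is to match the two defining conditions on either side. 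Note that the arrow in the statement runs from the $\delta^*_+$-functors to the $\delta^*$-functors, which is exactly the variance of $\phi^*$.

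To build $\phi$ I would apply the same combination of \cite[Theorem 5.4]{liu_zheng_gluing} and \cite[Theorem 4.27]{liu_zheng_gluing} as in Lemma \ref{lem:w6FF_step_3}. First, \cite[Theorem 5.4]{liu_zheng_gluing} applies to the consecutive pair of covariant open-immersion directions: using that $I$ is closed under composition and stable under pullback (Definition \ref{defn:Nagata_setup}), the two $I_-$-directions collapse to a single one, giving a categorical equivalence onto $\delta^*\C^{\op,\sqcup,\op}(P_\times^\op,I_-,P_-^\op)$. Then \cite[Theorem 4.27]{liu_zheng_gluing}, applied with the classes $P_\times$ and $I_-$ (which are pullback-stable by the Nagata axioms), identifies the resulting full diagonal nerve with the upper-triangular correspondence category $\delta^*_+\C^{\op,\sqcup,\op}(P_\times^\op,I_-)$; here the monoidal proper direction $P_\times^\op$ and the single-factor proper direction $P_-^\op$ both feed into the one vertical proper leg of the correspondence category, the $P_-$-edges appearing as the non-monoidal simple proper edges. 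As in Lemma \ref{lem:w6FF_step_3}, the hypotheses of both theorems reduce to pullback-stability together with the factorization axiom (4) and the closure axioms (5), (6) of the Nagata setup.

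It then remains to check that $\phi^*$ matches the two conditions. The lax-cartesian condition (\ref{eq:lax_cartesian_structure}) is preserved and reflected because $\phi$ is the identity on the objects $(X_i)_I$ of $\C^{\op,\sqcup,\op}$ and is compatible with the structural projection edges detecting the product decomposition, exactly as for $\psi^*$ in Lemma \ref{lem:w6FF_step_3}. For the adjointability condition the key point is purely notational: the four square types named in Definition \ref{defn:BC_functor_cat}, namely $(I_-^2,I_-^3)$, $(I_-^2,(P_\times^\op)^{1})$, $((P_-^\op)^{4},(P_\times^\op)^{1})$ and $((P_-^\op)^{4},I_-^3)$, are carried by $\phi$ precisely to the four square types $(I_-,I_-)$, $(I_-,P_\times^\op)$, $(P_-^\op,P_\times^\op)$ and $(P_-^\op,I_-)$ of condition (B) in Definition \ref{defn:bc_span}, the superscripts merely recording from which of the four directions each simple edge originates. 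Since $\phi^*$ does not alter the value of a functor on a simple edge or on a square, $F$ sends the squares of Definition \ref{defn:bc_span}(B) to vertically right-adjointable squares if and only if $\phi^*F$ sends the corresponding squares of Definition \ref{defn:BC_functor_cat} to vertically right-adjointable squares, and the identical comparison handles the compatibility condition on natural transformations. Hence $\phi^*$ restricts to an equivalence of the two BC-subcategories.

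I expect the main obstacle to be the first step, rather than the transfer of conditions. Concretely, one must verify in detail that the mixed-variance family of classes $(P_\times^\op,I_-,I_-,P_-^\op)$ meets the hypotheses of \cite[Theorem 5.4]{liu_zheng_gluing} and \cite[Theorem 4.27]{liu_zheng_gluing} --- in particular that the two distinct, non-adjacent proper directions $P_\times^\op$ and $P_-^\op$ fold correctly onto the single vertical proper leg of the correspondence category without losing or introducing any adjointability constraint --- and that the induced bijection on the four square types is exactly the one dictated by the notation. Once the combinatorial equivalence $\phi$ is established, the remainder is formal, just as in Lemma \ref{lem:w6FF_step_3}.
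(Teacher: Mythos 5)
Your overall strategy --- build a categorical equivalence $\phi$ of simplicial sets out of the Liu--Zheng theorems and then transfer the lax-cartesian and adjointability conditions formally along $\phi^*$ --- is exactly the paper's, and your treatment of the condition-transfer step (matching the four square types, and the compatibility condition on natural transformations) agrees with the paper's proof. The gap is in the construction of $\phi$ itself. After you collapse the two adjacent $I_-$-directions with \cite[Theorem 5.4]{liu_zheng_gluing}, you are left with the three-fold nerve $\delta^*\C^{\op,\sqcup,\op}(P_\times^\op,I_-,P_-^\op)$, and you then ask \cite[Theorem 4.27]{liu_zheng_gluing} to identify this with $\delta^*_+\C^{\op,\sqcup,\op}(P_\times^\op,I_-)$, with the leftover $P_-^\op$-direction ``feeding into'' the proper leg. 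Theorem 4.27 cannot do this: it compares $\delta^*$ with $\delta^*_+$ for a \emph{fixed} pair of morphism classes, i.e.\ it changes the shape of the nerve but never merges one direction into another. Merging directions is exclusively the job of Theorem 5.4, so a second application of it to the pair $(P_\times^\op,P_-^\op)$ is required --- and since these two directions are not adjacent in your ordering, you must first permute the factors.

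This is precisely how the paper proceeds: it first applies an isomorphism $\phi_2$ of simplicial sets interchanging the second and fourth factors, passing from $(P_\times^\op,I_-,I_-,P_-^\op)$ to $(P_\times^\op,P_-^\op,I_-,I_-)$; then a map $\phi_1$, given by precomposition with a product of diagonals, collapses the two adjacent pairs simultaneously by invoking \cite[Theorem 5.4]{liu_zheng_gluing} twice (once for $P_\times^\op,P_-^\op$ and once for $I_-,I_-$), landing in $\delta^*\C^{\op,\sqcup,\op}(P_\times^\op,I_-)$; and only then does $\phi_0$, which is \cite[Theorem 4.27]{liu_zheng_gluing}, convert $\delta^*$ into $\delta^*_+$. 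So the ``main obstacle'' you flag at the end is real, but it is not resolved by checking hypotheses more carefully: the theorem you assigned to that step is the wrong tool, and the repair is the permutation $\phi_2$ together with the second invocation of Theorem 5.4. With that correction, your argument coincides with the paper's proof.
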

\begin{proof}
 By \cite[Theorem 1.4.26]{liu_zheng_six_ops}, the natural map
$$\phi_0:\delta_2^*\C^{\op,\sqcup,\op}(P_\times^\op, I_-) \to \corr(\C,P,I)^\otimes $$
is a categorical equivalence. 

Let
 $$\phi_1:\delta_4^*\C^{\op,\sqcup,\op}(P_\times^\op,P_-^\op, I_-,I_-) \longrightarrow \delta_2^*\C^{\op,\sqcup,\op}(P_\times^\op, I_-)$$ be the map given in degree $n$ by precomposition with the product of diagonals   $$(\Delta^n)^\op \times \Delta^n \longrightarrow (\Delta^n)^\op \times (\Delta^n)^\op  \times \Delta^n \times \Delta^n.$$
The assumptions on $I$ and $P$ allow us to apply \cite[Theorem 1.5.4]{liu_zheng_six_ops} twice, with respect to the sets of morphisms $P_\times^\op, P_-^\op$ and $I_-,I_-$, showing that $\phi_1$ is a categorical equivalence.
 
Lastly, there is an isomorphism of simplicial sets 
$$ \phi_2: \delta_4^*\C^{\op,\sqcup,\op}(P_\times^\op,I_-, I_-,P_-^\op) \xlongrightarrow{\cong} \delta_4^*\C^{\op,\sqcup,\op}(P_\times^\op, P_-^\op, I_-,I_-) $$ given in degree $n$ by precomposition with the map 
     $$(\Delta^n)^\op \times (\Delta^n)^\op \times  \Delta^n \times \Delta^n \longrightarrow (\Delta^n)^\op  \times  \Delta^n \times \Delta^n \times (\Delta^n)^\op$$
     that simply permutes the factors in the product of categories in the obvious way (i.e. interchanging the second and the fourth).

Composing these maps gives a categorical equivalence 
$$\phi:=\phi_0\circ\phi_1\circ\phi_2:  \delta_4^*\C^{\op,\sqcup,\op}(P_\times^\op,I_-, I_-,P_-^\op) \to \corr(\C,P,I)^\otimes.$$
and precomposing with this map gives an equivalence of $\infty$-categories 
$$\phi^*:\fun(\corr(\C,P,I)^\otimes,\cat) \xlongrightarrow{\sim} \fun(\delta_4^*\C^{\op,\sqcup,\op}(P_\times^\op,I_-, I_-,P_-^\op),\cat).$$
Similar to the proof of Lemma \ref{lem:w6FF_step_3}, we observe that any $(I_-,I_-)$-square in $\corr(\C,P,I)^\otimes$ is the image of a $(I_-^2,I_-^3)$-square in $\delta_4^*\C^{\op,\sqcup,\op}(P_\times^\op,I_-, I_-,P_-^\op)$, etc. Therefore, for $F$ in $\fun(\corr(\C,P,I)^\otimes,\cat)$ and $G=\phi^*F$, we have that $F$ sends squares of types$(I_-,I_-)$, $(I_-,P_\times^\op)$, $(P_-^{\op},P_\times^{\op})$ and $(P_-^{\op},I_-)$ to vertically right-adjoinable squares if and only if $G$ sends squares of types $(I_-^2,I_-^3)$, $(I_-^2,P_\times^{\op,1})$, $(P_-^{\op,4},P_\times^{\op,1})$ and $(P_-^{\op,4},I_-^3)$ to vertically right-adjoinable squares.  
Similarly, for $\alpha$ a natural transformation in $\fun(\corr(\C,P,I)^\otimes,\cat)$, we have that $\alpha$ is in $ \fun_{BC}^\lax(\corr(\C,P,I)^\otimes ,\cat) $ if and only if $\psi^*\alpha$ is in $\fun^\lax_{BC}(\delta_4^*\C^{\op,\sqcup,\op}(P_\times^\op,I_-, I_-, P_-^\op) ,\cat)$.
\end{proof}
The existence of this final equivalence implies the following.
\begin{lem}\label{lem:meer_adjoinable_squares}
    For $F$ in $\fun^\lax_{BC}(\delta_4^*\C^{\op,\sqcup,\op}(P_\times^\op,I_-, I_-, P_-^\op),\cat)$, \textit{all} squares of types $(I_-,I_-)$, $(I_-,P_\times^\op)$, $(P_-^ \op,P_\times^\op)$ and $(P_-^\op,I_-)$ are sent to vertically right-adjoinable squares, including for example squares of type $(I_-^3, I_-^3)$ or of type $(P_-^{\op,4}, P_\times^{\op,1})$. 
\end{lem}
\begin{proof}
    For $F$ in  $\fun^\lax_{BC}(\delta_4^*\C^{\op,\sqcup,\op}(P_\times^\op,I_-, I_-, P_-^\op),\cat)$, we have that $F$ is naturally equivalent to a functor of the form $\phi^*H$ for some  $H$ in $\fun^\lax_{BC}(\corr(\C,P,I)^\otimes,\cat)$, by Lemma \ref{lem:w6FF_step_1}. Let us denote this natural equivalence by $\eta:F\to \phi^*H$. For $\phi^*H$ it is certainly the case that every square of one of the types $(I_-,I_-)$, $(I_-,P_\times^\op)$, $(P_-^ \op,P_\times^\op)$ and $(P_-^\op,I_-)$ is sent to a vertically right-adjoinable square, regardless of whether the type is e.g. $(I_-^2,I_-^3)$ or $(I_-^3, I_-^3)$. Indeed, for example for a morphism $i:U\to X$ in $I$, denoting the associated 2- and 3-simple morphisms by $(\id,i,\id,\id)$ and $(\id,\id,i,\id)$ respectively, we have $\phi^*H(\id,i,\id,\id)= \phi^*H(\id,\id,i,\id)$, since $\phi(\id,i,\id,\id) = \phi(\id,\id,i,\id)$. Since being adjoinable is a property stable under natural isomorphism, the same now holds for $F$. 
\end{proof}
To show how the equivalences proven in Lemma \ref{lem:w6FF_step_3}, Lemma \ref{lem:w6FF_step_2.1} and Lemma \ref{lem:w6FF_step_1} combine to an equivalence induced by restriction along $e:\corr(\C,P,I)^\otimes\hookrightarrow {\corr(\C)^\otimes}$, we need to show that the restriction of a Nagata three-functor formalism along $e$ is in $\fun_{BC}^\lax(\corr(\C,P,I)^\otimes ,\cat)$.
\begin{lem}\label{lem:nog_meer_adjoinable_squares}
    For $F$ in $\fun^\lax_{BC}(\delta_4^*\C^{\op,\sqcup,\op}(P_\times^\op,I_-^\op, I_-,P_-), \cat)$, in addition to sending squares of types 
   $(I_-^{\op,2},I_-^3)$, $(I_-^{\op,2},P_\times^{\op,1})$, $(P_-^4,P_\times^{\op,1})$ and $(P_-^4,I_-^3)$ to vertically left-adjoinable squares, $F$ sends squares of types  $(I_-^3,I_-^3)$, $(I_-^1,P_\times^{\op,1})$, $(P_-^ {\op,3},P_\times^{\op,1})$ and $(P_-^{\op,1},I_-^3)$ to vertically right-adjoinable squares. Moreover, morphisms in $\fun^\lax_{BC}(\delta_4^*\C^{\op,\sqcup,\op}(P_\times^\op,I_-^\op, I_-,P_-), \cat)$ are compatible with not only left adjoints, but also with these right adjoints, as in Definition \ref{defn:bc_span}.  
\end{lem}
\begin{proof}

Let $F$ be in $\fun^\lax_{BC}(\delta_4^*\C^{\op,\sqcup,\op}(P_\times^\op,I_-^\op, I_-,P_-), \cat)$.
By Lemma \ref{lem:meer_adjoinable_squares}, $PA(F)$ sends squares of the types $(I_-^3,I_-^3)$, $(I_-^3,P_\times^{\op,1})$, $(P_-^ {\op,3},P_\times^{\op,1})$ and $(P_-^{\op,1},I_-^3)$ to vertically right-adjoinable squares. But $F$ and $PA(F)$ agree on 1-simple edges and 3-simple edges, and therefore on squares of these types in $\delta_4^*\C^{\op,\sqcup,\op}(P_\times^\op,I_-^\op, I_-,P_-)$, $F$ agrees with $PA(F)$; so $F$ sends such squares to vertically right-adjoinable squares. 

For $\alpha$ a morphism in $\fun^\lax_{BC}(\delta_4^*\C^{\op,\sqcup,\op}(P_\times^\op,I_-^\op, I_-,P_-), \cat)$, we have that $PA(\alpha)$ is a morphism in $\fun^\lax_{BC}(\delta_4^*\C^{\op,\sqcup,\op}(P_\times^\op,I_-, I_-, P_-^\op),\cat)$ and therefore compatible with right adjoints. Since the components of $PA(\alpha)$ coincide with those of $\alpha$, it follows that $\alpha$ is compatible with right adjoints.
\end{proof}
\begin{cor}\label{cor:nog_meer_adjoinable_squares}
    For $D$ in $  \mathbf{3FF}$, $D$ sends squares of types $(I_-,I_-)$, $(I_-,P_\times^\op)$, $(P_-^ \op,P_\times^\op)$ and $(P_-^\op,I_-)$ to vertically right-adjoinable squares, and morphisms in $\mathbf{3FF}$ are compatible with left adjoints as well as right adjoints.
\end{cor}
\begin{proof}
Let $D$ be in $\mathbf{3FF}$. By Lemma \ref{lem:w6FF_step_3}, $\psi^*D$ is in $\fun^\lax_{BC}(\delta_4^*\C^{\op,\sqcup,\op}(P_\times^\op,I_-^\op, I_-,P_-), \cat)$, and therefore by Lemma \ref{lem:nog_meer_adjoinable_squares} sends squares of types $(I_-,I_-)$, $(I_-,P_\times^\op)$, $(P_-^ \op,P_\times^\op)$ and $(P_-^\op,I_-)$ to vertically right-adjoinable squares. By the same arguments as in the proof of Lemma \ref{lem:meer_adjoinable_squares}, this also applies to $D$. The same reasoning applies to morphisms between three-functor formalisms.  
\end{proof}

\begin{proof}[Proof of Proposition \ref{prop:3FF}]
Lemma \ref{lem:w6FF_step_3}, Lemma \ref{lem:w6FF_step_2.1} and Lemma  \ref{lem:w6FF_step_1} together give a zig-zag of equivalences
\begin{multline*}
    \mathbf{3FF}\xlongrightarrow[\sim]{\psi^*} \fun^\lax_{BC}(\delta_4^*\C^{\op,\sqcup,\op}(P_\times^\op, I_-^\op,I_-,P_-),\cat) \xlongrightarrow[\sim]{{PA}} \\ \fun^\lax_{BC}(\delta_4^*\C^{\op,\sqcup,\op}(P_\times^\op, I_-,I_-,P_-^\op),\cat) \xlongleftarrow[\sim]{\phi^*} \fun^\lax_{BC}(\corr(\C,P,I)^\otimes,\cat)   .
\end{multline*}
On the other hand, Corollary \ref{cor:nog_meer_adjoinable_squares} shows that restriction along $e$ indeed defines a functor 
$$e^*:\mathbf{3FF} \to \fun^\lax_{BC}(\corr(\C,P,I)^\otimes,\cat) .$$
Let us denote $$\Phi: = (\phi^*)^{-1}\circ PA \circ \psi^*: \mathbf{3FF} \to \fun^\lax_{BC}(\corr(\C,P,I)^\otimes,\cat) .$$ We want to show that $\Phi$ is naturally equivalent to $e^*$, for this it suffices to show that $PA\circ \psi^*$ is equivalent to $\phi^* \circ e^*$. Tho this end, we consider the following diagram of inclusions of simplicial sets.
\begin{center}
\begin{tikzcd}[column sep=tiny]
                                                                          & {\delta_4^*\C(P_\times^\op, I_-,I_-,P_-^\op)} \arrow[rr, "\phi"] &                                                                       & {\corr(\C,P,I)^\otimes} \arrow[rd, "e"] &                   \\
{\delta_2^*\C(P_\times^\op,I_-)} \arrow[ru, "\alpha"] \arrow[rrd, "\beta"'] &                                                                 &                                                                       &                                                   & \corr(\C)^\otimes \\
                                                                          &                                                                 & {\delta_4^*\C(P_\times^\op, I_-^\op, I_-, P_-)} \arrow[rru, "\psi"'] &                                                   &                  
\end{tikzcd}
\end{center}
Here $\alpha$ and $\beta$ are the natural inclusions of simplicial set that, on 1-simplices, make a square into a 4-dimensional hypercube by inserting identities in the second and fourth direction. We note that $\phi\circ \alpha = \phi_0\circ \phi_1\circ \phi_2 \circ \alpha = \phi_0$, since $\phi_1\circ \phi_2\circ \alpha = \id_{\delta^*\C(P_\times^\op, I_-)}$. Since both $\phi_0$ and $\phi$ are categorical equivalences, it follows that $\alpha$ is a categorical equivalence.

Therefore, to show that $PA\circ \psi^* \cong \phi^*\circ e^*$, it suffices to show that $\alpha^*\circ PA\circ \psi^* \cong \alpha^*\circ \phi^* \circ e^*$. Now, the commutative diagram above also shows that $\alpha^*\circ \psi^*\circ e^* = \beta^*\circ \psi^*$. Therefore we can reduce the problem to showing that $\beta^*\circ \psi^* \cong \alpha^*\circ PA \circ \psi^*$. 

 We claim that in fact, for $D$ in $\mathbf{3FF}$, there is an equality $\beta^*\circ \psi^*(D) = \alpha^*\circ PA \circ \psi^*(D)$. Indeed, we note that for $x$ an $n$-simplex in $\delta_4^*\C(P_\times^\op, I_-, I_-,P_-^\op)$, by definition 
 $$PA\circ \psi^*(D)(x) = \textup{ev}'(\psi^*(D),x) $$
 for $$\textup{ev}':\textup{Fun}_{\{2,4\},\textup{left}}(\delta_4^*\C^{\op, \sqcup,\op}(P_\times^\op, I_-^\op,I_-,P_-))\times \delta_4^*\C^{\op,\sqcup,\op}(P_\times^\op,I_-,I_-,P_-^\op) \to \cat $$ is the functor that results from applying Partial Adjoints (\cite[Proposition 2.2.4]{liu_zheng_six_ops}) to the standard evaluation functor
$$\textup{ev}:\textup{Fun}_{\{2,4\},\textup{left}}(\delta_4^*\C^{\op, \sqcup,\op}(P_\times^\op, I_-^\op,I_-,P_-))\times \delta_4^*\C^{\op,\sqcup,\op}(P_\times^\op,I_-^\op,I_-,P_-) \to \cat $$
 as in the proof of Lemma \ref{lem:functorial_PA}. Now, conclusion (1) in \cite[Proposition 2.2.4]{liu_zheng_six_ops} implies that $\textup{ev}'$, when restricted to $\textup{Fun}_{\{2,4\},\textup{left}}(\delta_4^*\C^{\op, \sqcup,\op}(P_\times^\op, I_-^\op,I_-,P_-))\times \delta_2^*\C^{\op,\sqcup,\op}(P_\times^\op,I_-)$ via $\alpha$, coincides with the  $\textup{ev}$ when restricted to $\textup{Fun}_{\{2,4\},\textup{left}}(\delta_4^*\C^{\op, \sqcup,\op}(P_\times^\op, I_-^\op,I_-,P_-))\times \delta_2^*\C^{\op,\sqcup,\op}(P_\times^\op,I_-)$ via $\beta$. This implies that $\alpha^*(PA(\psi^*(D)) =\beta^*(\psi^*(D))$, as desired.

\end{proof}

Now we specialize Proposition \ref{prop:3FF} to the category of Nagata six-functor formalisms. First, we introduce some more notation.
\begin{defn}
    For $X$ a simplicial set, $X'\subseteq X$ a sub-simplicial set, $\C$ an $\infty$-category and $\C'\subseteq \C$ a sub-$\infty$-category, let $\fun(X\mid X',\C\mid\C')$ denote the full sub-$\infty$-category of $\fun(X,\C)$ on functors $f:X\to \C$ that restrict to a functor $X' \to \C'$.
\end{defn}
The following definition build upon Definition \ref{defn:bc_span}.
\begin{defn}\label{defn:BC_span_compleet}
        We denote by 
    $$\fun^{\lax,\cl}_{BC}(\corr(\C,P,I)^\otimes\mid \corr(\C,P,I),\cat\mid\cat^{LL}) \subseteq \fun^\lax_{BC}(\corr(\C,P,I)^\otimes,\cat)$$
    the full subcategory spanned by lax cartesian structures $F:\corr(\C,P,I)^\otimes\to \cat$ such that in addition to (B) in definition \ref{defn:bc_span},
    \begin{enumerate}
        \item[(i)] for $p:X \to Y$ in $P$, the image $p^*:F(Y)\to F(X)$ is a left-left adjoint, and we denote the adjoints by $p^*\vdash p_\natural\vdash p^!$
        \item[(ii)] for $i:U\hookleftarrow X$ in $I$, the image $i^!:F(U)\to  F(X)$ is a left-left adjoint, and we denote the adjoints by $i_!\vdash i^\natural\vdash i_*$,
        \item[(iii)] for every $X$ in $\C$, $F(X)$ is a closed symmetric monoidal stable $\infty$-category.
    \end{enumerate}
\end{defn}

\begin{prop}\label{prop:w6FF}
The equivalence of $\infty$-categories in Proposition \ref{prop:3FF} restricts to an equivalence of $\infty$-categories 
$$e^*: \textup{\textbf{6FF}} \xrightarrow{\sim} \fun^{\lax,\cl}_{BC}{(\corr(\C,P,I)^\otimes \mid \corr(\C,P,I),\cat\mid\cat^{LL})}. $$
\end{prop}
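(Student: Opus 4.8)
The plan is to use that $\mathbf{6FF}\subseteq\mathbf{3FF}$ and $\fun^{\lax,\cl}_{BC}(\dots)\subseteq\fun^\lax_{BC}(\delta^*_+\C^{\op,\sqcup,\op}(P_\times^\op,I_-),\cat)$ are both \emph{full} subcategories. Since Proposition \ref{prop:3FF} already gives an equivalence $e^*\colon\mathbf{3FF}\xrightarrow{\sim}\fun^\lax_{BC}(\delta^*_+\C^{\op,\sqcup,\op}(P_\times^\op,I_-),\cat)$, it is enough to verify that $e^*$ matches the two full subcategories on objects: for $D\in\mathbf{3FF}$ with $F:=e^*(D)$, I want to show that $D$ is a six-functor formalism if and only if $F$ satisfies conditions (i), (ii) and (iii) of Definition \ref{defn:BC_span_compleet}. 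Granting this, $e^*$ restricts to a fully faithful, essentially surjective functor between the subcategories, hence to an equivalence; no separate argument for the natural transformations is then required, precisely because both subcategories are full.

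The translation rests on the Grothendieck and Wirthm\"uller contexts of Proposition \ref{prop:cohom_prop_etale}. For $p\in P$ the adjunction $p^*\vdash p_!$ identifies the right adjoint $p_\circ$ of $F(p)=p^*$ with $p_!=p_*$; for $i\in I$ the adjunction $i_!\vdash i^*$ identifies the right adjoint $i^\circ$ of $F(i)=i_!$ with $i^*=i^!$. Under this dictionary, condition (i), namely $p^*\vdash p_\circ\vdash p^!$, says exactly that $p_!$ admits a right adjoint $p^!$; condition (ii), namely $i_!\vdash i^\circ\vdash i_*$, says exactly that $i^*$ admits a right adjoint $i_*$; and condition (iii) is the requirement that each $F(X)=D(X)$ be closed symmetric monoidal, i.e. that the tensor product induced by the diagonal admit a right adjoint.

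For the forward implication, if $D$ is a six-functor formalism then by definition $f_!$ has a right adjoint $f^!$ and $f^*$ has a right adjoint $f_*$ for \emph{every} morphism $f$; specialising to $p\in P$ and to $i\in I$ and combining with Proposition \ref{prop:cohom_prop_etale} yields (i) and (ii), while closedness of each $D(X)$ gives (iii). For the converse I invoke the Nagata factorisation axiom: any $f$ factors as $f=p\circ i$ with $i\in I$ and $p\in P$. Functoriality of the $(-)^*$- and $(-)_!$-parts of the three-functor formalism $D$ gives $f^*=i^*p^*$ and $f_!=p_!i_!$. Since the right adjoint of a composite is the composite of the right adjoints in the opposite order, (i) and (ii) produce a right adjoint $f_*=p_*i_*$ of $f^*$ and a right adjoint $f^!=i^*p^!$ of $f_!$; together with (iii) this shows that $D$ is a six-functor formalism. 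As the existence of a right adjoint is a property and right adjoints are unique, the outcome does not depend on the chosen factorisation.

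The one genuine point is this last step: the hypotheses (i) and (ii) only constrain morphisms in $P$ and in $I$, whereas the definition of a six-functor formalism quantifies over \emph{all} morphisms of $\C$. The Nagata factorisation $f=p\circ i$ is exactly what bridges this gap, and the only care required is the bookkeeping of which member of each adjoint triple $p^*\vdash p_\circ\vdash p^!$ and $i_!\vdash i^\circ\vdash i_*$ is used when composing right adjoints.
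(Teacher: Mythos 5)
Your proposal is correct and follows essentially the same route as the paper's own proof: both reduce to an object-level check using that $\mathbf{6FF}\subseteq\mathbf{3FF}$ and $\fun^{\lax,\cl}_{BC}(\cdots)\subseteq\fun^\lax_{BC}(\cdots)$ are full subcategories, both use Proposition \ref{prop:cohom_prop_etale} as the dictionary $p_\circ\cong p_!$, $i^\circ\cong i^*$, and both handle the converse via the Nagata factorisation $f=p\circ i$ together with compatibility of adjoints under composition. Your version merely makes the right adjoints $f_*=p_*i_*$ and $f^!=i^*p^!$ explicit where the paper just observes that $f^*=i^*p^*$ and $f_!=p_!i_!$ are composites of left adjoints.
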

\begin{proof}
For $D$ in  $  \mathbf{3FF}$, we need to show that $D$ is in  $  \textup{\textbf{6FF}}$ if and only if $F:=e^*D$ is in $$\fun^{\lax,\cl}_{BC}{(\corr(\C,P,I)^\otimes \mid \corr(\C,P,I),\cat\mid\cat^{LL})}.$$ 
We recall that by Proposition \ref{prop:cohom_prop_etale}, for $p$ in $P$, $p_*$ has a right adjoint isomorphic to $p_!$, and for $i$ in $I$, $i_!$ has a right adjoint isomorphic to $i^*$. 

First, assume that $F$ is in $\fun^{\lax,\cl}_{BC}{(\corr(\C,P,I)^\otimes \mid \corr(\C,P,I),\cat\mid\cat^{LL})}$. Let $f$ be in $\C$. By assumption we can factor $f$ as $f=p\circ i$ where $i$ in $I$ and $p$ in $P$. Since $F$ is in $$\fun^{\lax,\cl}_{BC}{(\corr(\C,P,I)^\otimes \mid \corr(\C,P,I),\cat\mid\cat^{LL})},$$ we have that $i_!$ and $p^*$ are left-left adjoints. By the observation above, it follows that $i^*$ and $p_!$ are left adjoints. This shows that $f_!=p_!\circ i_!$ and $f^*=i^* \circ p^*$ are both left adjoints. Moreover, for $X$ in $\C$, we know that $D(X)=F(X)$ is a closed symmetric monoidal $\infty$-category. This shows that $D$ is a pre-six-functor formalism.

On the other hand, suppose $D$ is in  $  \textup{\textbf{6FF}}$. In particular, for $i$ in $I$ and $p$ in $P$, we have that $p_!$ and $i^*$ are left adjoints. Again by Proposition \ref{prop:cohom_prop_etale}, it follows that $p^*$ and $i_!$ are left-left adjoints. Moreover, for $X$ in $\C$, we know that $F(X)=D(X)$ is a closed symmetric monoidal $\infty$-category. This shows that $F$ is in $\fun^{\lax,\cl}_{BC}{(\corr(\C,P,I)^\otimes \mid \corr(\C,P,I),\cat\mid\cat^{LL})}$. 
\end{proof}
\section{Noetherian Nagata setups and local six-functor formalisms}\label{sect:reducing_to_comp}
In this section we define \textit{local six-functor formalisms}, which we can think of as ``Nagata six-functor formalisms that take values in presentable stable $\infty$-categories, and have recollements''. For this, we need to specialize from a Nagata setup to a \textit{noetherian Nagata setup}. This is a Nagata setup $(\C, I, P)$ where $\C$ is a 1-category, and where the class $J\subseteq P$ of monomorphisms in $P$ plays a special role dual to that of $I$; one can think of morphisms in $I$ and $J$ as open immersions and closed immersions respectively. 
The upshot is that a noetherian Nagata setup has exactly the required properties to prove one the equivalence (\ref{eq:intro1}), we show this in Subsection \ref{subsect:span_and_comp}. Then we show that a local six-functor formalism is uniquely determined by its restriction to a subcategory of ``complete objects''. 

Subsections \ref{subsect:noeth_nagata_setup} and \ref{subsect:span_and_comp} can be mostly skipped by readers only interested in six-functor formalisms on the category of varieties; in that case, the relevant results in these subsections are already proven in \cite{kuij_descent}, apart from Lemma \ref{lem:spaniscorr}.

\subsection{Noetherian Nagata setups}\label{subsect:noeth_nagata_setup}
We recall that a 1-category is \textit{balanced} if every morphism that is both an epimorphism and a monomorphism, is an isomorphism. We also recall that a family of morphisms $\{a_i:A_i\to X \}_{i\in I}$ is called \textit{jointly epic} if for $f,g:X\to Y$, whenever $f\circ a_i=g\circ a_i$ for all $i\in I$, then $f=g$. Lastly, we recall that in a 1-category $\C$, an initial object $\emptyset$ is called a \textit{strict initial object} if every morphism $X \to \emptyset$ is an isomorphism.

In the following definition we use the convention that for $A\to X$ and $B\to X$ monomorphisms, the pullback $A\times_X B$ can be denoted $A\cap B$ when $X$ is clear from context.
\begin{defn}\label{defn:app_top_Nagata_setup}
Let $(\C,All,I,P)$ be a Nagata setup. We denote by $J$ the class of monomorphisms that are in $P$, and we denote arrows in $J$ by $\closedrightarrow$.  Then $(\C,I,P)$ is a \textit{topological Nagata setup} if
    \begin{enumerate}[label=(\roman*)] 
    \item $\C$ is a small and balanced 1-category,
    \item $\C$ has a terminal object $*$ and a strict initial object $\emptyset$,
    \item for any object $X$ in $\C$, the arrow $\emptyset \to X$ is in $I$ and in $P$ (and hence in $J$),
    \item all morphisms in $I$ are monomorphisms,
    \item for $X$ in $\C$, the 1-category which we denote by $(I/X)_{\cong}$, with as objects morphisms $i:U\hookrightarrow$ in $I$ and morphisms given by isomorphisms $f:U\xrightarrow{\cong} U'$ that make the diagram
 \begin{center}
     \begin{tikzcd}
         U \arrow[d, "f"']\arrow[r,hook,"i"] & X\\
         U'\arrow[ru, hook,"i'"']&
     \end{tikzcd}
 \end{center}
 commute, is equivalent to a 0-category,
    \item \textit{closed complements:}   for $i:U \hookrightarrow X$ in $I$ there is unique maximal morphism $j:X \setminus U \closedrightarrow X$ in $J$ such that 
    \begin{center}
        \begin{tikzcd}
            \emptyset \arrow[d] \arrow[r] & U \arrow[d, hook, "i"]\\
            X \setminus U \arrow[r, two heads, tail, "j"] & X
        \end{tikzcd}
    \end{center}
    is a pullback square, in the sense that for every $k:C \closedrightarrow X$ for which 
    \begin{equation*}
          \begin{tikzcd}
            \emptyset \arrow[d] \arrow[r] & U \arrow[d, hook, "i"]\\
            C \arrow[r, two heads, tail, "k"] & X
        \end{tikzcd}
    \end{equation*}
    is a pullback, there is a unique morphism $C\closedrightarrow X\setminus U$ in $J$ such that 
     \begin{center}
        \begin{tikzcd}
            C \arrow[dr, two heads, tail, "k"'] \arrow[r, two heads,tail] & X \setminus U \arrow[d, two heads,tail, "j"]\\
             & X
        \end{tikzcd}
    \end{center}
    commutes,
    \item \textit{open complements}: vice versa, for $j:C\closedrightarrow X$ in $J$ there is a unique maximal morphism $i:X \setminus C \hookrightarrow X$ in $I$ such that \begin{center}
        \begin{tikzcd}
            \emptyset \arrow[d] \arrow[r] & X \setminus C \arrow[d, hook, "i"]\\
            C \arrow[r, two heads,tail, "j"] & X
        \end{tikzcd}
    \end{center} is a pullback square,
    \item  \textit{open and closed complements are stable under pullback}: given $X\setminus U \closedrightarrow X \hookleftarrow U$ a pair of open and closed complements and $f:Y\to X$ any morphism, the top row in 
    \begin{center}
        \begin{tikzcd}
            (X\setminus U) \times_X Y \arrow[r, two heads,tail] \arrow[d] & Y\arrow[d] & U \times_X Y \arrow[d] \arrow[l, hook']  \\
              X\setminus U \arrow[r, two heads,tail] & X & U \arrow[l, hook']
        \end{tikzcd}
        \end{center}
            is also a pair of open and closed complements,
    \item if $i:U\hookrightarrow X$ in $I$ is also in $P$ (and hence in $J$), then the open complement is the same as the open complement, so $X\setminus U$ is well-defined and  $X\setminus U\to X$ is in both $I$ and $J$,
    \item the morphisms $U\hookrightarrow X$ and $X\setminus U \closedrightarrow X$ are jointly epic,
    \item \textit{closures}: 
    for $e:A\to X$ a monomorphism, there is a unique minimal $j:\overline{A}\closedrightarrow X$ in $J$ such that there exists a monomorphism $f:A\to \overline{A}$ and $j\circ f=e$, in the sense that for $j':\overline{A}'\closedrightarrow X$ in $J$ and $f':A\to \overline{A}'$ such that $j'\circ f'=e$, there is a unique $g:\overline{A}\closedrightarrow \overline{A}'$ in $J$ that makes
    \begin{center}
        \begin{tikzcd}
            A \arrow[r,  "f"] \arrow[dr, "f' " ']& \overline{A}\arrow[d, two heads,tail, "g"] \arrow[r, two heads,tail, "j"] & X\\
            & \overline{A}' \arrow[ur, two heads,tail, "j'"'] &
        \end{tikzcd}
    \end{center}
    commute,  
    \item  \textit{``subspace topology''}: for $A\to X$ a monomorphism and $U\hookrightarrow A$ in $I$, there is $V\hookrightarrow X$ in $I$ such that $V\cap A = U$. For $C \closedrightarrow A$ in $J$, there is $D\closedrightarrow X$ in $J$ such that $D \cap A=C$,
    \item \textit{gluing along opens}: for $U\hookrightarrow X$ and $U\hookrightarrow Y$, there is a pushout, which we denote
    \begin{center}
        \begin{tikzcd}
            U \arrow[r, hook] \arrow[d, hook] & X \arrow[d, hook] \\
            Y \arrow[r, hook] & X\cup_U Y,
        \end{tikzcd}
    \end{center}
   this square is also a pullback square and the morphisms $X\hookrightarrow X\cup_U Y$ and $Y \hookrightarrow X\cup_U Y$ are in $I$,
    \item \textit{closed unions}: for $C \closedrightarrow X$ and $D \closedrightarrow X$ in $J$, the pushout over $C\cap D$ which we denote
    \begin{center}
        \begin{tikzcd}
            C\cap D \arrow[r, two heads, tail] \arrow[d, two heads, tail] & C \arrow[d, two heads, tail] \\
            D \arrow[r, two heads, tail] & C\cup D
        \end{tikzcd}
    \end{center}
    exists, and the map $C\cup D \to X$ is in $J$,
    \item \textit{complement of a union}: for $C \closedrightarrow X$ and $D \closedrightarrow$ in $J$, the natural map $X\setminus (C\cup B)\to X\setminus C \cap X \setminus D$ is an isomorphism,
    \item \textit{separated}: for $X$ in $\C$, the diagonal $X \to X \times X$ is in $J$. 
    \end{enumerate}
\end{defn}
\begin{example}\label{ex:varieties}
    The motivating example is the Nagata setup $(\var_k,I,P)$ where $\var_k$ is the category of varieties over a field $k$, $I$ are the open immersions and $P$ the proper morphisms. In that case, $J$ is the class of closed immersions. 
\end{example}

The following definition generalizes the notion of denseness for topological spaces.
\begin{defn}
     Let $(\C,I,P)$ be a topological Nagata setup. We say that a monomorphism $A\to X$ in $\C$ is \textit{dense} if there is no $V\hookrightarrow X$ in $I$ with $V\neq\emptyset$, such that $U \cap V=\emptyset$. 
\end{defn}
In the example of varieties, this coincides with the familiar definition of denseness. We record a number of standard results involving denseness, that follow from the axioms in Definition \ref{defn:app_top_Nagata_setup}.

\begin{lem}\label{lemm:app_dense}
   Let $(\C,I,P)$ be a topological Nagata setup. Then for any monomorphism $e:A\to X$ in $\C$, the morphism $ f:A \to \overline{A}$ to the closure of $A$ in $X$ is dense.   
\end{lem}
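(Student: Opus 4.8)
The plan is to argue by contradiction. Suppose $f\colon A\to\overline{A}$ is not dense, so there is a nonempty open $V\hookrightarrow\overline{A}$ in $I$ whose intersection $A\cap V:=A\times_{\overline{A}}V$ with $A$ is empty (i.e.\ the strict initial object $\emptyset$). Writing $C:=\overline{A}\setminus V$ for the closed complement of $V$ in $\overline{A}$ given by axiom (vi), I would aim to contradict the minimality of the closure $\overline{A}$ from axiom (xi). The whole argument hinges on first showing that $A$ factors through the closed subobject $C\closedrightarrow\overline{A}$, and then observing that this would make $\overline A\setminus V$ a strictly smaller closed subobject containing $A$.

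The key step, which I expect to be the main obstacle, is to upgrade the emptiness of $A\cap V$ into an honest factorization of $f$ through $C$. For this I would pull the open/closed complement pair $(V,C)$ back along $f$. By the pullback-stability of complements (axiom (viii)) this produces an open/closed complement pair in $A$ with open part $A\times_{\overline{A}}V=\emptyset$ and closed part $A\times_{\overline{A}}C$. Since such a pair is jointly epic (axiom (x)) and the map out of the strict initial object (axiom (ii)) imposes no condition, the closed inclusion $A\times_{\overline{A}}C\to A$ must be an epimorphism; as it is also a monomorphism (it lies in $J$) and $\C$ is balanced (axiom (i)), it is an isomorphism. Composing its inverse with the projection to $C$ gives a monomorphism $f'\colon A\to C$ with $(C\closedrightarrow\overline{A})\circ f'=f$.

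Finally I would feed $f'$ into the universal property of the closure. The composite $C\closedrightarrow\overline{A}\closedrightarrow X$ lies in $J$ (which is closed under composition, being the monomorphisms in the composition-stable class $P$), and $A$ factors through it via $f'$; so axiom (xi) yields a morphism $g\colon\overline{A}\closedrightarrow C$ in $J$ compatible with the structure maps to $X$. Since $\overline{A}\closedrightarrow X$ is monic this forces $(C\closedrightarrow\overline{A})\circ g=\id_{\overline{A}}$, making the closed inclusion $C\closedrightarrow\overline{A}$ a split epimorphism and hence, being monic, an isomorphism. Pulling $V$ back along this isomorphism identifies $V$ with $V\times_{\overline{A}}C\cong\emptyset$, contradicting $V\neq\emptyset$. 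The remaining verifications—existence of the relevant pullbacks, that $f'$ is monic, and the uniqueness clauses in (vi) and (xi)—are routine; the genuine content lies in the factorization step of the second paragraph.
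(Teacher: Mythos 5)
Your proof is correct and takes essentially the same route as the paper: pull back the open/closed pair along $f$ to factor $A$ through the closed complement $\overline{A}\setminus V$, then invoke the minimality clause of axiom (xi) to split the inclusion $\overline{A}\setminus V \to \overline{A}$ and conclude $V=\emptyset$. The only cosmetic difference is that you justify the factorization step via joint epicness (axiom (x)) plus balancedness, whereas the paper reads it off from the uniqueness of the pulled-back complement pair; both are valid.
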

\begin{proof}
    Suppose there is a morphism $V\hookrightarrow \overline{A}$ in $I$ such that $A \cap V = \emptyset$. By (viii) in Definition \ref{defn:app_top_Nagata_setup}, pulling back the open-closed decomposition along $A\to \overline{A}$ gives a diagram
    \begin{center}
        \begin{tikzcd}
            \emptyset \arrow[r] \arrow[d]& A \arrow[d] & A \arrow[d] \arrow[l] \\
            V \arrow[r, hook] & \overline{A} & \overline{A}\setminus V. \arrow[l, two heads,tail]
            \end{tikzcd}
    \end{center}
    Since $A\to \overline{A}\setminus V \twoheadrightarrow \overline{A} \twoheadrightarrow X$ is $e$, and $\overline{A}$ is minimal, there must be a morphism $g:\overline{A}\closedrightarrow \overline{A}\setminus V$ in $J$ that makes
    \begin{center}
        \begin{tikzcd}
        & \overline{A} \arrow[d, two heads,tail, "f"] \arrow[drr] & & \\
            A \arrow[r] \arrow[ur] & \overline{A}\setminus V \arrow[r, two heads, tail] & \overline{A}\arrow[r, two heads,tail] & X
        \end{tikzcd}
    \end{center}
    commute. This implies that $\overline{A}\setminus V \closedrightarrow \overline{A}$ is an epimorphism in addition to being a monomorphism. So since $\C$ is balanced, we have $\overline{A}\setminus V = \overline{A}$ and hence $V=\emptyset$.
\end{proof}
In particular, when $e:A\to X$ is a monomorphism such that $\overline A = X$, then $e$ is dense. The reverse is true too, by the following lemma.
\begin{lem}\label{lem:app_dense_reverse}
   Let $(\C,I,P)$ be a topological Nagata setup. If $e:A\to X$ is a dense monomorphism, then $\overline A \closedrightarrow X$ is an isomorphism.
\end{lem}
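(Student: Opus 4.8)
The plan is to prove that the open complement of $\overline{A}$ in $X$ is empty, and then to upgrade the statement ``$\overline{A}$ has empty open complement'' to ``$\overline{A}\closedrightarrow X$ is an isomorphism'' using that $\C$ is balanced. Throughout I would use that $e:A\to X$ factors as $A\xrightarrow{f}\overline{A}\xrightarrow{j}X$ with $f$ a monomorphism and $j$ in $J$, which is exactly the content of the closure axiom (xi) in Definition \ref{defn:app_top_Nagata_setup}.

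First I would produce the open complement $i:X\setminus\overline{A}\hookrightarrow X$ of the closed subobject $j:\overline{A}\closedrightarrow X$, which exists by the open-complement axiom (vii). By the defining pullback square for this complement, $\overline{A}\times_X(X\setminus\overline{A})\cong\emptyset$. I would then compute the intersection $A\cap(X\setminus\overline{A})=A\times_X(X\setminus\overline{A})$ by pasting: since $e=j\circ f$, the pullback of $i$ along $e$ factors as
\[
A\times_X(X\setminus\overline{A})\;\cong\;A\times_{\overline{A}}\bigl(\overline{A}\times_X(X\setminus\overline{A})\bigr)\;\cong\;A\times_{\overline{A}}\emptyset .
\]
Because $\emptyset$ is a strict initial object (axiom (ii)), the projection $A\times_{\overline{A}}\emptyset\to\emptyset$ is an isomorphism, so $A\cap(X\setminus\overline{A})\cong\emptyset$. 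As $i$ lies in $I$, the hypothesis that $A\to X$ is dense now forces $X\setminus\overline{A}\cong\emptyset$.

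It remains to pass from ``$X\setminus\overline{A}\cong\emptyset$'' to ``$j$ is an isomorphism''. Here I would use that $X\setminus\overline{A}\hookrightarrow X$ and $\overline{A}\closedrightarrow X$ form a complementary open--closed pair, so by the joint-epimorphism axiom (x) the two legs $\emptyset\to X$ and $\overline{A}\closedrightarrow X$ are jointly epic; since any two parallel maps out of $X$ automatically agree after restriction along $\emptyset\to X$, this means $\overline{A}\closedrightarrow X$ is itself an epimorphism. Being both a monomorphism (it lies in $J$) and an epimorphism, and $\C$ being balanced (axiom (i)), $j$ is an isomorphism.

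The main obstacle I anticipate is precisely this last step: axiom (x) is phrased in terms of an open $U$ and its closed complement $X\setminus U$, whereas I need it for the pair whose open leg is the \emph{empty} open complement of the closed subobject $\overline{A}$. I therefore need the complement operations of axioms (vi) and (vii) to be mutually inverse, so that $\overline{A}$ really is the closed complement of $X\setminus\overline{A}$ and axiom (x) applies to the pair in the required orientation. Once this involutivity is granted the argument closes; everything else --- the pasting computation and the use of strict initiality --- is routine.
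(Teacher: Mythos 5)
Your computation is, in its substantive part, identical to the paper's own proof: the paper forms exactly your pasted pullback rectangle (the left square collapses because $\emptyset$ is strict initial, the right square is the defining square of the open complement of $\overline{A}$), deduces $A\cap(X\setminus\overline{A})=\emptyset$, and invokes density to force $X\setminus\overline{A}=\emptyset$. Up to that point the two arguments coincide step for step.

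The divergence is in the final implication ``$X\setminus\overline{A}=\emptyset$ implies $\overline{A}\closedrightarrow X$ is an isomorphism'', and your instinct that this is the delicate point is correct: the paper does not prove it, it simply writes that the diagram contradicts density ``unless $\overline{A}\closedrightarrow X$ is an isomorphism''. Two remarks on your proposed repair. First, once the involutivity you ask for is granted, the joint-epi/balanced machinery is superfluous: the closed complement of $\emptyset\hookrightarrow X$ is $X$ itself, because every $J$-morphism (in particular $\mathrm{id}_X$) pulls back over $\emptyset$ to $\emptyset$, so by the maximality clause of axiom (vi) it factors through that complement, which is then a split epimorphism as well as a monomorphism, hence an isomorphism. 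Consequently ``$\overline{A}$ is the closed complement of $X\setminus\overline{A}$'' literally says $\overline{A}\cong X$. Second, and for the same reason, this involutivity is not an auxiliary fact you could hope to derive from Definition \ref{defn:app_top_Nagata_setup} and then apply: instantiated at this pair it \emph{is} the statement of the lemma for the closed monomorphism $\overline{A}\closedrightarrow X$. So your proposal is the paper's proof with its missing step made explicit rather than closed; neither derivation obtains it from the listed axioms, and the paper in fact uses involutivity of complements tacitly elsewhere as well (e.g.\ ``$U=X\setminus(X\setminus U)$'' in the proof of Lemma \ref{lem:app_standard}, and the remark after the definition of reducibility). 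Note, by contrast, that when the subobject in question is \emph{open} — as for $V\to Y\times_X U$ in Lemma \ref{lem:app_pullback_lem} — your joint-epi-plus-balanced finish does go through verbatim from axiom (x) as stated, so there your mechanism is exactly the right way to make the ``complement empty, hence isomorphism'' step rigorous.
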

\begin{proof}
We can form the commutative diagram
\begin{center}
    \begin{tikzcd}
        \emptyset \arrow[d] \arrow[r] & \emptyset \arrow[d]\arrow[r]& X\setminus \overline{A}\arrow[d, hook]\\
        A\arrow[r] & \overline{A} \arrow[r, two heads,tail] &X
    \end{tikzcd}
\end{center}
where both the small squares, and therefore the outer square, are pullbacks. This contradicts $e:A \to X$ being dense, unless $\overline{A}\closedrightarrow X$ is an isomorphism (and therefore $X\setminus \overline{A} = \emptyset$).
\end{proof}
\begin{lem}\label{lem:app_open_in_closure}
    For $i:U\hookrightarrow X$ in $I$, the morphism $f:U\to \overline{U}$ is in $I$.
\end{lem}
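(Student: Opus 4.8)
The plan is to exhibit $f\colon U\to\overline U$, up to isomorphism, as a \emph{pullback} of the open immersion $i$ along the closure map $j\colon\overline U\closedrightarrow X$, and then to conclude from the fact that $I$ is stable under pullback. Recall from axiom (xi) of Definition \ref{defn:app_top_Nagata_setup} that the closure comes equipped with a monomorphism $f\colon U\to\overline U$ and a map $j\colon\overline U\closedrightarrow X$ in $J$ satisfying $j\circ f=i$; this factorization is the only input we need about $\overline U$.

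First I would form the pullback square
\begin{center}
\begin{tikzcd}
U\times_X\overline U \arrow[r,"p"] \arrow[d,"q"'] & \overline U \arrow[d,"j"] \\
U \arrow[r,"i"] & X,
\end{tikzcd}
\end{center}
which exists because $(\C,I)$ is a geometric setup, so pullbacks of $I$-morphisms exist. Since $i\in I$ and $I$ is stable under pullback along arbitrary morphisms, the projection $p$ (the pullback of $i$ along $j$) lies in $I$. Moreover $q$, being a pullback of the monomorphism $j$, is itself a monomorphism. Next I would apply the universal property of this pullback to the pair $(\id_U,f)$: because $i\circ\id_U=i=j\circ f$, there is a unique section $\sigma\colon U\to U\times_X\overline U$ with $q\circ\sigma=\id_U$ and $p\circ\sigma=f$.

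Finally, $q$ is a monomorphism admitting the section $\sigma$, so it is an isomorphism: from $q\circ\sigma=\id_U$ one gets $q\circ(\sigma\circ q)=q\circ\id$, and left-cancelling the monomorphism $q$ yields $\sigma\circ q=\id$ as well (alternatively, a split epimorphism that is monic is an isomorphism since $\C$ is balanced). Hence $f=p\circ\sigma=p\circ q^{-1}$ is a composite of $p\in I$ with an isomorphism, and since $I$ contains all isomorphisms and is closed under composition, $f\in I$, as desired. There is essentially no serious obstacle here; the only point demanding care is to keep track of \emph{which} of the two projections is classified as a pullback of $i$ (the one landing in $\overline U$, namely $p$), so that $f$ is identified with $p\circ q^{-1}$ rather than with $q$ — it is $p$, not $q$, that we know lies in $I$.
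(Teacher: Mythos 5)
Your proof is correct and is essentially the paper's argument: the paper likewise identifies $f$ as the pullback of $i\in I$ along $j\colon\overline U\closedrightarrow X$ and concludes by stability of $I$ under pullback, simply asserting directly (using that $j$ is a monomorphism) that the square with $U=\overline U\times_X U$ is cartesian. Your version only adds the explicit verification, via the section $\sigma$ and balancedness/mono-cancellation, that the comparison map to the abstract pullback is an isomorphism.
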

\begin{proof}
    Since $U \hookrightarrow X$ factors through $\overline{U}\closedrightarrow X$, we have a pullback square
    \begin{center}
        \begin{tikzcd}
            U \arrow[r] \arrow[d] & U \arrow[d,hook] \\
            \overline{U}\arrow[r, two heads,tail]& X.
        \end{tikzcd}
    \end{center}
 Since $I$ is closed under pullbacks, this implies that $U \to \overline{U}$ is in $I$.
\end{proof}
\begin{lem}
     Let $(\C,I,P)$ be a topological Nagata setup. Then any morphism $f:X\to Y$ in $\C$ can be written as $p\circ i$ where $p$ is in $P$, and $i$ is in $I$ and dense.
\end{lem}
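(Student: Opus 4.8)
The plan is to start from the factorization already guaranteed by the Nagata setup and then shrink the open part to its closure, which makes it dense while keeping the proper part proper.

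First I would invoke the factorization axiom of a Nagata setup (axiom (4) of Definition \ref{defn:Nagata_setup}), which applies to every morphism of $\C$ since here $E$ is the class of all morphisms: there exist $i \in I$ and $p \in P$ with $f = p \circ i$, say $i: X \hookrightarrow Z$ and $p: Z \to Y$. This is a factorization of the required shape, but $i$ need not be dense. By axiom (iv) of Definition \ref{defn:app_top_Nagata_setup} the morphism $i$ is a monomorphism, so the closure construction of axiom (xi) applies to it.

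Next I would form the closure $j: \overline{X} \closedrightarrow Z$ of $X$ inside $Z$, as provided by axiom (xi). It comes equipped with a monomorphism $i': X \to \overline{X}$ satisfying $j \circ i' = i$, so that $f = p \circ j \circ i'$. I then record two properties of $i'$: by Lemma \ref{lem:app_open_in_closure}, applied to the open immersion $X \hookrightarrow Z$, the map $i'$ lies in $I$; and by Lemma \ref{lemm:app_dense} the map $i' : X \to \overline{X}$ is dense. Hence $i'$ is a dense morphism in $I$, exactly the open part we want.

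Finally, $j$ lies in $J \subseteq P$ by definition of the closure, and $p \in P$; since $P$ is stable under composition (part of being a geometric setup), the composite $p \circ j : \overline{X} \to Y$ is again in $P$. Setting $p' := p \circ j$ and $i' := i'$ yields $f = p' \circ i'$, the desired factorization with the open part dense. There is no genuinely hard step here: the entire content is carried by the closure machinery together with the two lemmas just established. The only points deserving a moment of care are that $j$ really lands in $P$ (it is in $J$, the monomorphisms of $P$, so that composing with $p$ stays in $P$) and that the identity $i = j \circ i'$ holds on the nose, both of which are immediate from the universal property of the closure in axiom (xi).
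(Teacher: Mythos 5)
Your proof is correct and takes essentially the same route as the paper: invoke the factorization axiom of a Nagata setup, then refine it by passing through the closure of $X$ in $Z$, with Lemma \ref{lem:app_open_in_closure} giving that $X\to\overline{X}$ is in $I$ and Lemma \ref{lemm:app_dense} giving denseness. Your write-up is in fact more explicit than the paper's terse proof, spelling out the denseness and why $p\circ j$ remains in $P$ (via $J\subseteq P$ and stability under composition), steps the paper leaves implicit.
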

\begin{proof}
 With (1) in Definition \ref{defn:Nagata_setup} we can write $f$ as
$$X \hookrightarrow Z \rrightarrow Y$$
with $i$ in $I$ and $p$ in $P$, and by Lemma \ref{lem:app_open_in_closure} we can rewrite this as
$$X \hookrightarrow \overline{X}\closedrightarrow Z\rrightarrow Y.  $$   
\end{proof}
\begin{rmk}
    In any Nagata setup with a terminal object $*$, for $X$ any object, the morphism $X\to *$ factors as an $I$-morphism followed by $P$ morphism
    $$X\hookrightarrow Y \to *.$$
    We cal $Y$ a \textit{compactification} of $X$, and by definition, $Y$ is in $\comp$. By the lemma above, there exists a compactification $Y$ such that $X$ is dense in $Y$. 
\end{rmk}
\begin{lem}[{\cite[Lemma 6.15]{kuij_descent}}] \label{lem:app_pullback_lem}
 Let $(\C,I,P)$ be a topological Nagata setup, and let
 \begin{center}
     \begin{tikzcd}
         V \arrow[d, hook, "i"']\arrow[r, two heads]&U\arrow[d, hook]\\
         Y\arrow[r, two heads]&X
     \end{tikzcd}
 \end{center}
 be a commutative square with the vertical morphisms in $I$ and the horizontal morphisms in $P$, and with $i$ dense. Then it is a pullback square.
\end{lem}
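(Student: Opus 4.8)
The plan is to compare the given square to the genuine pullback and to show that the comparison map is an isomorphism. Write $W := Y \times_X U$ for the pullback of the open immersion $U \hookrightarrow X$ along the proper map $Y \twoheadrightarrow X$. Since $(\C,I)$ and $(\C,P)$ are geometric setups, both $I$ and $P$ are stable under pullback, so the projection $W \to Y$ lies in $I$ and the projection $W \to U$ lies in $P$. The commutativity of the original square, together with the universal property of $W$, produces a unique comparison map $h\colon V \to W$ whose composites to $Y$ and to $U$ are respectively the dense open immersion $i$ and the proper map $V \twoheadrightarrow U$. The square is a pullback precisely when $h$ is an isomorphism, so this is what I would establish.

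First I would locate $h$ in the two distinguished classes. The composite $V \xrightarrow{h} W \to Y$ equals $i \in I$, and $W \to Y \in I$, so the cancellation property (5) of Definition \ref{defn:Nagata_setup} forces $h \in I$. Dually, $V \xrightarrow{h} W \to U$ equals the proper map $V \twoheadrightarrow U \in P$, and $W \to U \in P$, so cancellation (6) gives $h \in P$. Because every morphism in $I$ is a monomorphism (Definition \ref{defn:app_top_Nagata_setup}(iv)), $h$ is a monomorphism lying in $P$; that is, $h \in J$, a closed immersion.

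The key step is to show that $h$ is dense. Suppose, toward a contradiction, that there is a nonempty $V' \hookrightarrow W$ in $I$ with $V \cap_W V' = \emptyset$. Composing with the open immersion $W \to Y$ yields a nonempty $V' \hookrightarrow Y$ in $I$ (as $I$ is closed under composition). Since $W \to Y$ is a monomorphism through which $V' \to Y$ factors, we have $W \times_Y V' = V'$, and hence $V \cap_Y V' = V \cap_W V' = \emptyset$. This contradicts the density of $i\colon V \hookrightarrow Y$, so no such $V'$ exists and $h$ is dense.

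Finally, $h$ is a dense monomorphism, so Lemma \ref{lem:app_dense_reverse} gives that the closure $\overline{V} \closedrightarrow W$ is an isomorphism; but $h \in J$ means $V \hookrightarrow W$ is already closed, so by minimality in the closure axiom (Definition \ref{defn:app_top_Nagata_setup}(xi)) its closure is itself, $\overline{V} = V$. Thus $h\colon V \to W$ is an isomorphism and the original square is a pullback. I expect the density step to be the main obstacle: one must check carefully that intersections computed in $W$ agree with those computed in $Y$ (which is exactly where one uses that $W \to Y$ is a monomorphism), and that the closure of an already-closed subobject is itself.
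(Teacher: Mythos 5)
Your proof is correct and takes essentially the same route as the paper's: both identify the comparison map $V \to Y\times_X U$ as lying in $I\cap P$ (hence in $J$) via the cancellation axioms, and both derive the contradiction by pushing a would-be nonempty open of $Y\times_X U$ disjoint from $V$ forward along the open monomorphism $Y\times_X U \to Y$, violating the density of $i$. The only cosmetic difference is the finish: the paper works with the specific complementary open $(Y\times_X U)\setminus V$ provided by axiom (ix) and shows it is empty, whereas you show the comparison map is itself dense and conclude via Lemma \ref{lem:app_dense_reverse} and the closure axiom.
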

\begin{proof}
    The proof is essentially the same as that of \cite[Lemma 6.15]{kuij_descent}, but we repeat it for the reader's convenience. The canonical morphism $V \to Y\times_X U$ is both in $I$ and in $P$, by (4) and (5) in Definition \ref{defn:Nagata_setup}. Therefore it is also in $J$, and by (ix) in Definition \ref{defn:app_top_Nagata_setup} the complement $(Y\times_X U )\setminus V \to Y\times_X U$ is both in $I$ and in $J$. We now consider the following diagram 
    \begin{center}
     \begin{tikzcd}
         \emptyset \arrow[d] \arrow[r] & (Y\times_X U ) \setminus V \arrow[r, equal] \arrow[d, hook] & (Y \times_X U) \setminus V \arrow[d, hook] \\
         V \arrow[r, hook] \arrow[d, equal] & Y\times_X U \arrow[d, equal] \arrow[r, equal] & Y\times_X U \arrow[d, hook] \\
         V \arrow[r, hook]  \arrow[rr, hook, bend right=15, shift right, "i"']& Y \times_X U \arrow[r, hook] & Y
     \end{tikzcd}
    \end{center}
    where all squares, and therefore the outer square, are pullbacks. This contradicts $i$ being dense, unless $(Y\times_X U)\setminus V = \emptyset$ and therefore $V \to Y\times_X U$ is an isomorphism.
\end{proof}

Another notion that generalizes easily to this setting, is (ir)reducibility.
\begin{defn}
  Let $(\C,I,P)$ be a topological Nagata setup. An object in $\C$ is called \textit{reducible} if there are  $j:C \closedrightarrow X$ and $k:D \closedrightarrow X$ in $J$ such that $C\cup D =X$, and neither $j$ nor $k$ is an isomorphism. An object that is not reducible, is called \textit{irreducible}. 
\end{defn} 
Note that, in the light of (xv) in Definition \ref{defn:app_top_Nagata_setup}, the condition $C\cup D= X$ is equivalent to the condition that $X\setminus C \cap X \setminus D = \emptyset$. 

\begin{defn}
    Let $(\C,I,P)$ be a topological Nagata setup and $X$ an object of $\C$. An \textit{irreducible component} of $X$ is an object $Z$ with a morphism $Z \closedrightarrow X$ in $J$, such that $Z$ is irreducible, and there is no $Y\closedrightarrow Z$ in $J$ with $Y$ irreducible and a non-identity map $Z\to Y$ that makes
    \begin{center} 
        \begin{tikzcd}
            Z \arrow[rd, two heads, tail] \arrow[d] \\
            Y \arrow[r, two heads, tail ]  & X
        \end{tikzcd}
    \end{center}
    commute.
\end{defn}
In the example of $(\var_k, I,P)$, the definitions above recover the familiar notions of (ir)reducibility and irreducible components.

\begin{lem}\label{lem:app_standard}
  Let $(\C,I,P)$ be a topological Nagata setup. Let $i:U\hookrightarrow X$ be in $I$ such that $X$ is irreducible, and $U\neq \emptyset$. Then $i$ is dense, and $U$ is irreducible. 
\end{lem}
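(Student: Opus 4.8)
The plan is to follow the classical topological argument---``a nonempty open subset of an irreducible space is dense and irreducible''---translating each step into the axioms of Definition \ref{defn:app_top_Nagata_setup}. Both assertions will be proved by contradiction, and I would establish density first, since the irreducibility of $U$ relies on $\overline{U}=X$, which follows from density via Lemma \ref{lem:app_dense_reverse}.

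For \emph{density}: suppose $i$ is not dense, so there is $V\hookrightarrow X$ in $I$ with $V\neq\emptyset$ and $U\cap V=\emptyset$. Passing to closed complements gives $X\setminus U\closedrightarrow X$ and $X\setminus V\closedrightarrow X$ in $J$. Using the open/closed complement duality (vii) together with ``complement of a union'' (xv), the intersection of the open complements of $X\setminus U$ and $X\setminus V$ is $U\cap V=\emptyset$, so the open complement of $(X\setminus U)\cup(X\setminus V)$ is empty; the joint-epimorphism axiom (x) and balancedness (i) then force $(X\setminus U)\cup(X\setminus V)=X$. Since $U,V\neq\emptyset$, neither closed complement is an isomorphism, so this exhibits $X$ as reducible---contradiction. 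Hence $i$ is dense, and Lemma \ref{lem:app_dense_reverse} gives $\overline U=X$.

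For \emph{irreducibility of $U$}: suppose $U=C'\cup D'$ with $C',D'\closedrightarrow U$ in $J$, neither an isomorphism. I would take closures $\overline{C'},\overline{D'}\closedrightarrow X$ (axiom (xi)). The two key claims are (a) $\overline{C'}\cup\overline{D'}=X$ and (b) neither $\overline{C'}$ nor $\overline{D'}$ is an isomorphism; together these contradict the irreducibility of $X$. For (a): the pushout description of the closed union $C'\cup D'=U$ (axiom (xiv)) together with the inclusions $C'\to\overline{C'}$, $D'\to\overline{D'}$ produce a monomorphism $U\to\overline{C'}\cup\overline{D'}$ over $X$; the minimality of the closure $\overline U=X$ then yields a section of the monomorphism $\overline{C'}\cup\overline{D'}\to X$, which is therefore an isomorphism by balancedness. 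For (b): the crucial computation is $\overline{C'}\cap U=C'$. I would prove this by showing that the canonical map $C'\to\overline{C'}\cap U$ is simultaneously in $J$ (via the cancellation property (6) of Definition \ref{defn:Nagata_setup}, applied to $C'\closedrightarrow U$ factoring through $\overline{C'}\cap U\closedrightarrow U$) and dense (transporting the density of $C'\hookrightarrow\overline{C'}$ from Lemma \ref{lemm:app_dense} along the open immersion $\overline{C'}\cap U\hookrightarrow\overline{C'}$), whence it is an isomorphism by Lemma \ref{lem:app_dense_reverse} and balancedness. Given $\overline{C'}\cap U=C'$, the open $U\setminus C'$---nonempty because $C'\to U$ is not an isomorphism---is disjoint from $\overline{C'}$, so $X\setminus\overline{C'}\neq\emptyset$ and $\overline{C'}\neq X$; symmetrically for $\overline{D'}$.

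The main obstacle will be the abstract closure manipulations in (b), in particular proving $\overline{C'}\cap U=C'$, which in topology is the transparent fact that closure commutes with restriction to an open. Here it must be assembled from density (Lemma \ref{lemm:app_dense}), the stability of $I$ and $J$ under pullback, the cancellation axiom, and balancedness; keeping track of which projection of the fibre product $\overline{C'}\times_X U$ is open and which is closed, and checking that density transfers along the open immersion $\overline{C'}\cap U\hookrightarrow\overline{C'}$, is where the care is needed.
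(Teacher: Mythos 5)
Your proof is correct, but the second half takes a genuinely different route from the paper. For density, the two arguments are essentially the same ingredients in different polarity: the paper argues directly that $(X\setminus U)\cup\overline{U}=X$ (via the complement-of-a-union axiom) so that irreducibility forces $\overline{U}=X$, while you argue by contradiction that a nonempty open $V$ disjoint from $U$ would give the closed cover $(X\setminus U)\cup(X\setminus V)=X$; both rely on the same implicit identification $X\setminus(X\setminus U)=U$, which the paper also uses without comment. For irreducibility of $U$, however, the paper's argument is much lighter than yours: from $U\setminus C$ and $U\setminus D$ disjoint and nonempty in $U$, it simply composes with $i$ (using that $I$ is closed under composition and that pullback squares paste) to get two disjoint nonempty opens of $X$, which contradicts irreducibility of $X$ by exactly the dual-complement characterisation — notably, this is the very same ``two disjoint nonempty opens contradict irreducibility'' move you yourself use for the density half. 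Your route instead pushes the closed pieces up to closures $\overline{C'},\overline{D'}\closedrightarrow X$, which forces you to prove the auxiliary fact $\overline{C'}\cap U=C'$ (closure commutes with restriction to an open), assembled from the cancellation axiom, transport of density along open immersions, Lemma \ref{lem:app_dense_reverse}, and balancedness, plus a pushout/minimality argument for $\overline{C'}\cup\overline{D'}=X$. That sub-lemma is correct and of some independent interest in this abstract setting, but it is entirely avoidable: the opens $U\setminus C'$ and $U\setminus D'$ already live in $X$, so the closure machinery buys nothing for the statement at hand, at the cost of invoking several more axioms of Definition \ref{defn:app_top_Nagata_setup}.
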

\begin{proof}
 We have $J$-morphisms $X \setminus U\closedrightarrow X$ and $\overline{U} \closedrightarrow X$. As in the proof of Lemma \ref{lem:app_dense_reverse}, we have that $U \cap X \setminus \overline{U} = \emptyset$. Since $U = X \setminus (X \setminus U)$, this shows that $(X\setminus U) \cup \overline{U} = X$. Since $X$ is irreducible, it must be that $\overline{U} = X$, so $i$ is dense.

Now suppose that $U$ is reducible. Then there are non-isomorphisms $C\closedrightarrow U$ and $D \closedrightarrow U$ with $U\setminus C \cap U \setminus D = \emptyset$. We can form the diagram
\begin{center}
\begin{tikzcd}
    \emptyset \arrow[r] \arrow[d] & U \setminus D \arrow[r, equal] \arrow[d, hook] & U \setminus D \arrow[d, hook] \\
    U\setminus C \arrow[d, equal] \arrow[r, hook] & U \arrow[r, equal] \arrow[d, equal] &U \arrow[d, hook]\\
    U\setminus C \arrow[r, hook] & U \arrow[r, hook] & X
\end{tikzcd}
\end{center}
where all the small squares are pullbacks. Therefore the outer square is a pullback, contradicting that $X$ is irreducible. This shows that $U$ is irreducible.
\end{proof}

Lastly, we can generalize the usual (Krull) dimension function. 
\begin{defn}\label{defn:appendix_dimension}
     Let $(\C,I,P)$ be a topological Nagata setup. For $X$ in $\C$, we set $\dim(X)$ to be the largest integer $d$ such that there exists a chain 
     $$ C_0 \closedrightarrow C_1 \closedrightarrow \dots \closedrightarrow C_d \closedrightarrow X $$
    of morphisms in $J$, with $C_0,\dots, C_d$ non-empty, distinct and irreducible (possibly $ \dim(X)=\infty$). We set $\dim(\emptyset)=-1$.
\end{defn}
From this definition it follows immediately that for $C\closedrightarrow X$ in $J$, we have $\dim(C)\leq \dim(X)$. However, to make the dimension function generally behave as we want in a topological Nagata setup, we need to assume some extra conditions. We introduce the following terminology. 

\begin{defn}\label{defn:app_noeth}
    A \textit{noetherian Nagata setup} is a topological Nagata setup $(\C,I,P)$, such that
    \begin{enumerate}[label=(\roman*)] 
    \item for every $X$ in $\C$, $\dim(X)$ is finite,
    \item every $X$ in $\C$ can be written as a finite union $X_1\cup \dots \cup X_n$ of irreducible components $X_i \closedrightarrow X$, 
    \item for $U\hookrightarrow X$ in $I$, we have $\dim(U)\leq \dim(X)$,
    \item if $U\hookrightarrow X$ in $I$ is dense, then $\dim(U)=\dim(X)$ and $\dim(X\setminus U)<\dim(X)$.
    \end{enumerate}
\end{defn}
\begin{example}\label{ex:app_noeth_setup}
  The topological Nagata setup of varieties in Example \ref{ex:varieties} is noetherian.  Another example of a noetherian Nagata setup is the category of separated, finite type, reduced schemes over a noetherian base scheme $S$, with $I$ the open immersions, and $P$ the proper morphisms. It is necessary for the morphisms to be separated and of finite type, to get a Nagata setup. The objects need to be separated to satisfy (xvi) in Definition \ref{defn:app_top_Nagata_setup}, and reduced to satisfy (vi).  \end{example}

\subsection{The categories $\span(\C)$ and $\comp(\C)$}\label{subsect:span_and_comp}

In this subsection we study the categories $\span(\C)$ and $\comp(C)$ that generalize the categories $\span$ and $\comp$ from the Nagata setup of varieties to arbitrary noetherian Nagata setups.

In any Nagata setup, we can define the following category. 
 
 \begin{defn}
   Let $(\C,All,I,P)$ be a Nagata setup. We denote by $\span(\C)$ the $\infty$-category $\corr(\C,I,P)$. In other words, $\span(\C)$ is the subcategory of $\corr(\C)$ spanned by edges $X \hookleftarrow U \twoheadrightarrow X$ where $U \hookrightarrow X$ is in $I$, and $U \twoheadrightarrow X$ is in $P$.
\end{defn} 
Note that $\span(\C)$ is the \textit{opposite} category to the $\infty$-category $\corr(\C,P,I)$ which we considered in Subsection \ref{subsect:proper_open}. The reason is that we will define a Grothendieck topology on $\span(\C)$, and consider functors 
$$\corr(\C,P,I) = \span(\C)^\op \to \cat $$
that satisfy descent with respect to this topology. 

    In general, $\corr(\C,I_1,I_2)$ need not be a 1-category, even if $\C$ is. However, for a topological Nagata setup we can prove the following.
    \begin{prop}\label{prop:1-cats}
        Let $(\C,I,P)$ be a topological Nagata setup. Then the $\infty$-categories $\corr(P,I)$ and $\corr(P, I)^\otimes$ are 1-categories.
    \end{prop}
    \begin{proof}
    This follows from (v) in Definition \ref{defn:app_top_Nagata_setup}. First, we show that $\corr(P,I)$ is a 1-category. For every $X$ in $\C$, we can choose a skeleton $\textup{Open}(X)$ of $(I/X)_{\cong}$, which is just a set. Then every edge in $\corr(P,I)$ is equivalent to an edge
    $$X \twoheadleftarrow U \hookrightarrow Y$$
    where $U\hookrightarrow Y$ is in $\textup{Open}(X)$. In the sub-simplicial set spanned by these edges, composition is now well defined, since for two composable spans 
$$X \xtwoheadleftarrow{p}U \xhookrightarrow{i} Y \xtwoheadleftarrow{q} V \xhookrightarrow{j} Z $$
there is exactly one $q^{-1}(U)\hookrightarrow Z$ in $\textup{Open}(Z)$, and the composition becomes
$X \twoheadleftarrow q^{-1}(U) \hookrightarrow Z.$ 

Now for $\corr(\C,P,I)^\otimes$, we note that an edge 
$$(X_i)_I \leftarrow (U_j)_J \rightarrow (Y_j)_J  $$
is isomorphic to an edge
$$(X_i)_I \leftarrow (U_j')_J \rightarrow (Y_j)_J  $$
where each $U_j'\to Y_j$ is in $\textup{Open}(Y_j)$. Then sub-simplicial set spanned by edges of this again form is a 1-category that is equivalent to $\corr(\C,P,I)^\otimes$.
    \end{proof}
Of course a direct consequence of the above is that $\span(\C)$ is a $1$-category. 
  
We note that $\span(\C)$ does not have products; however, $(\span(\C),\times)$ with $\times$ the cartesian products in $\C$, is still a symmetric monoidal 1-category. 

\begin{defn} Let $(\C,All,I,P)$ be a Nagata setup. As in Notation \ref{nota:C_times}, we denote by 
    $\span(\C)_\times^\op\to \fin_\part$
     the cocartisian fibration that classifies the symmetric monoidal 1-category $(\span(\C)^\op,\times)$. Explicitly, $\span(\C)_\times$ is a 1-category with as objects tuples $(X_i)_I$ indexed by a finite set, with $X_i$ in $\C$, and a morphism
    $$ (X_i)_I \to (Y_j)_J $$
    is given by a partial map $\alpha:J \dashrightarrow I$, and for all $i\in I$ a map 
    $$ X_i \hookleftarrow U_i \twoheadrightarrow  \prod_{j\in \alpha^{-1}(i)} Y_j.$$ 
\end{defn}
\begin{lem}\label{lem:spaniscorr}
For $(C,I,P)$ a topological Nagata setup, there is an equivalence of symmetric monoidal $\infty$-categories $\span(\C)_\times^\op\simeq \corr(\C,P,I)^\otimes$. 
\end{lem}
\begin{proof}
By the definition above, and by Proposition \ref{prop:1-cats}, both $\span(\C)_\times^\op$ and $\corr(\C,P,I)^\otimes$ are 1-categories, so we just need to compare their objects and morphisms.

 For both $\span(\C)_\times^\op$ and $\corr(\C,P,I)^\otimes$, the objects are tuples $(X_i)_I$ of objects in $\C$, indexed by a finite set. 
 We give a functor 
 $$\span(\C)_\times^\op\to \corr(\C,P,I)^\otimes $$
 which is the identity on objects, as follows. A morphism $(X_i)_I\to (Y_j)_J$ in $\span(\C)_\times^\op$ is given by a partial map $\alpha:I\dashrightarrow J$ and for $j\in J$ a map
 $$ Y_j \hookleftarrow U_j \twoheadrightarrow \prod_{\alpha^{-1}(j)} X_i$$
 in $\span(\C)$.  This is equivalent to giving the tuple $(U_j)_J$, 
 a map $(U_j)_J\to (Y_j)_J$ in $I_-$ and a map $(U_j)_J\to(X_i)_I$ in $P_\times$. This gives a map in $\corr(\C,P,I)^\otimes$, and it is clear that this assignment is a bijection on the morphisms in both categories. 
\end{proof}
The following notation is inspired by the fact that, for the Nagata setup of varieties from Example \ref{ex:varieties}, this defines the category of complete varieties.
\begin{defn}
    For $(\C, All, I,P)$ a Nagata setup, we define $\comp(\C)$ to be the full subcategory of $\C$ spanned by objects $X$ such that the unique morphism $X\to *$ to the terminal object is in $P$.
\end{defn}
If $\C$ is a 1-category, then it is clear that $\comp(\C)$ is a 1-category too. By (v) in Definition \ref{defn:Nagata_setup}, all morphisms in $\comp(\C)$ are in $P$. Therefore $\comp(\C)$ embeds into $\span(\C)$.

As in \cite[Section 6]{kuij_descent}, we can define the following sets of squares. 
\begin{defn}\label{def:abs_blowup_square}
     Let $(\C,I,P)$ be a topological Nagata setup. An \textit{abstract blowup square} in $\C$ is a pullback square of $P$-morphisms
     \begin{equation}\label{eq:app_abs_blowup}
      \begin{tikzcd}
             E \arrow[d, two heads] \arrow[r, two heads,tail]&Y\arrow[d, two heads, "p"]\\
             C \arrow[r, two heads,tail, "j"'] & X
         \end{tikzcd}     
     \end{equation}       
     with $j$ in $J$, such that the induced morphism on the open complements $Y\setminus E \to X \setminus C$ is an isomorphism.

     A \textit{localization square} in $\span(\C)$ is a commutative square of the form 
     \begin{equation}\label{eq:app_localization}
         \begin{tikzcd}
             X \setminus U \arrow[r, two heads,tail,"j" ] & X \\
             \emptyset \arrow[u, hook, "i"'] \arrow[r, two heads,tail] & U \arrow[u, hook].
         \end{tikzcd}
     \end{equation}
     with $i$ in $I$ and $j$ in $J$.
\end{defn}
This allows us to define a cd-structure $AC$ on $\comp(\C)$, given by abstract blowup squares in $\comp(\C)$. The abstract blowup squares also define a cd-structure on $\comp(\C)_0$, as defined in \cite[Definition 5.1]{kuij_descent}. Lastly, we consider the set of abstract blowup squares and localization squares $A\cup L$ as a cd-structure on $\span(\C)$. 

    \begin{rmk}
  In the upcoming lemmas and in the proof of Proposition \ref{prop:excision_is_descent}, we heavily use the theory of cd-structures as developed by Voevodsky in \cite{voevodsky}, and an adaptation of this theory to categories without a strict initial object, as in \cite[Sections 2 and 3]{kuij_descent}. We recall the following for a cd structure $P$ on a category $\D$ with an initial object $\emptyset$.
    \begin{itemize}
    \item[(a)] We denote by $\tau_P$ denotes the topology generated by $P$ (\cite[Section 2]{voevodsky}), and by $\tau^c_P$ the coarse topology generated by $P$ (\cite[Definition 2.3]{kuij_descent}).
    \item[(b)] If $P$ is \textit{c-complete} (respectively \textit{complete}) and $F$ is a presheaf (of sets) on $\D$ that sends distinguished squares to pullback squares (respectively, $F$ sends distinguished squares to pullback squares and $F(\emptyset)\cong *$) then $F$ is a $\tau^c_P$-sheaf (respectively, a $\tau_P$-sheaf). 
     \item[(c)] If $P$ is \textit{c-regular} (respectively regular), and $F$ is a $\tau^c_P$-sheaf (respectively, a $\tau_P$-sheaf), then $F$ sends distinguished squares to pullback squares (respectively, $F$ sends distinguished squares to pullback squares and $F(\emptyset)\cong *$).
     \item[(d)] If $P$ is \textit{compatible with a dimension function} in addition to being c-complete and c-regular (respectively, complete and regular), then a simplicial presheaf on $\D$ is a $\tau^c_P$-hypersheaf (respectively, a $\tau_P$-hypersheaf) if and only if $F$ sends distinguished squares to homotopy pullback squares (respectively, $F$ sends distinguished squares to homotopy pullback squares and $F(\emptyset)$ is contractible).
    \end{itemize}
\end{rmk}

\begin{lem}\label{lem:app_thm_1}
    Let $(\C,I,P)$ be a noetherian Nagata setup. Then the cd-structures $AC$ on $\comp(\C)$, $AC$ on $\comp(\C)_0$ and $A\cup L$ on $\span(\C)$ are compatible with the dimension function as defined in Definition \ref{defn:appendix_dimension}.
\end{lem}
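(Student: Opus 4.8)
The plan is to check, for each of the three cd-structures, the conditions in the definition of a cd-structure compatible with a dimension function (\cite{voevodsky}, in the coarse form of \cite[Section 2]{kuij_descent}). Concretely this amounts to verifying a handful of inequalities relating $\dim$ on the corners of every distinguished square, and the argument is the exact analogue of the variety-level computation in \cite{kuij_descent}, with each geometric fact about varieties replaced by an axiom of Definition \ref{defn:app_noeth} or by one of Lemmas \ref{lemm:app_dense}--\ref{lem:app_standard}. Since all three cd-structures consist of abstract blowup squares (\ref{eq:app_abs_blowup}), and the cd-structure $A\cup L$ on $\span(\C)$ in addition contains the localization squares (\ref{eq:app_localization}), it suffices to treat these two types of square; the fact that the objects of $\comp(\C)$ and $\comp(\C)_0$ are complete plays no role in the dimension estimates.

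For a localization square (\ref{eq:app_localization}) the source corner is $\emptyset$, with $\dim\emptyset=-1$, so the only content is the estimate for the remaining corners $U$ and $X\setminus U$. Here $\dim U\le\dim X$ is axiom (iii) of Definition \ref{defn:app_noeth}, while $\dim(X\setminus U)\le\dim X$ holds because $X\setminus U\closedrightarrow X$ lies in $J$ (the remark following Definition \ref{defn:appendix_dimension}); when $U$ is dense the sharper statement $\dim(X\setminus U)<\dim X$ is precisely axiom (iv). This is all that is required.

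For an abstract blowup square (\ref{eq:app_abs_blowup}) the essential points are the estimates on $C$, on $Y$, and on the exceptional object $E$. The bound $\dim C\le\dim X$ is again the remark after Definition \ref{defn:appendix_dimension}. To handle $Y$ and $E$, I would decompose $X$ into its finitely many irreducible components using axiom (ii) of Definition \ref{defn:app_noeth}, and argue on each component $X_k\closedrightarrow X$ separately, inducting on $\dim X$ (finite by axiom (i)). On a component $X_k$ not contained in $C$, the open $(X\setminus C)\cap X_k\hookrightarrow X_k$ is nonempty and hence dense by Lemma \ref{lem:app_standard}; via the isomorphism $Y\setminus E\cong X\setminus C$ over the complement, together with Lemma \ref{lemm:app_dense} and Lemma \ref{lem:app_dense_reverse}, the corresponding piece of $Y$ contains a dense open subobject isomorphic to $(X\setminus C)\cap X_k$, whence its dimension equals $\dim X_k$ and the corresponding piece of $E$, being the closed complement of that dense open, has dimension strictly below $\dim X_k$ by axiom (iv). On a component contained in $C$ the open part of the square is empty, and its contribution is governed by the closed center, where the inductive hypothesis applies. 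Assembling the components yields $\dim Y\le\dim X$ together with the required strict drop for $E$.

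The main obstacle is the abstract blowup square: in the variety case one controls $\dim Y$ and $\dim E$ through fibre-dimension arguments for the proper map $p$, which are unavailable in the abstract setting. The substitute is to extract all dimension information from the isomorphism $Y\setminus E\cong X\setminus C$ away from the center, using the denseness package (Lemmas \ref{lemm:app_dense}--\ref{lem:app_standard}) and the dense-open behaviour of $\dim$ in axiom (iv), and to reduce the genuinely delicate case -- a component swallowed by the center, where the naive corner inequalities can fail -- to strictly lower dimension via the component decomposition of axiom (ii). Once these ingredients are in place the verification is routine and identical in form to the one carried out for $\span$, $\comp$ and $\comp_0$ in \cite{kuij_descent}.
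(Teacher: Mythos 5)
Your proposal rests on a misreading of what ``compatible with a dimension function'' requires. The condition (Voevodsky's boundedness, in the form used in \cite{kuij_descent}) is \emph{not} a list of inequalities satisfied by the corners of every distinguished square; it is the requirement that there exist a subclass of \emph{reducing} squares (those satisfying $\dim(Y)\leq\dim(X)$ and $\dim(E)<\dim(X)$) such that the sieve generated by any distinguished square contains a simple cover by squares from this subclass. The distinction is not cosmetic, because the corner inequalities you set out to prove for \emph{all} abstract blowup squares are false. Take $X=C=\textup{spec}\ k$ and $Y=E=\mathbb{P}^1$: this is a pullback square of $P$-morphisms with $j\in J$, the map on open complements is $\emptyset\to\emptyset$, so it is an abstract blowup square (in $\comp(\C)$ as well, both objects being complete), yet $\dim(Y)=1>0=\dim(X)$. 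Your component-wise induction cannot repair this: for a component of $X$ contained in $C$, nothing bounds the dimension of its preimage under $p$ in terms of $\dim(X)$, so the ``component swallowed by the center'' is not a delicate case of a true statement -- it is exactly where the statement you are trying to prove fails, at every dimension, and the inductive hypothesis is vacuous. The same misreading makes the localization part incomplete: for a non-dense $U\hookrightarrow X$ the strict drop $\dim(X\setminus U)<\dim(X)$ is unavailable, so such squares are not reducing, and what is actually needed (and what your ``this is all that is required'' omits) is a refinement step, namely that the arrow $\overline{U}\hookleftarrow U$ lies in the sieve $\langle \emptyset\to U,\ X\hookleftarrow U\rangle$, so that every localization square is refined by squares in the dense subclass $L'$.

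For contrast, the paper's proof never estimates the original square; it \emph{produces the refinement}. One defines $A'$ to be the abstract blowup squares with $\dim(Y)\leq\dim(X)$ and $\dim(E)<\dim(X)$, notes that the irreducible components of $X$ (axiom (ii) of Definition \ref{defn:app_noeth}) give a simple $A'$-cover, and then, for an arbitrary abstract blowup square (\ref{eq:app_abs_blowup}) with $X$ irreducible, replaces $Y$ by the closure $\overline{Y\setminus E}\closedrightarrow Y$. By Lemma \ref{lem:app_pullback_lem} the square with corners $\overline{Y\setminus E}\setminus(Y\setminus E)$, $\overline{Y\setminus E}$, $C$, $X$ is again an abstract blowup square; it lies in $A'$ because $Y\setminus E$ is dense in $\overline{Y\setminus E}$, so $\dim(\overline{Y\setminus E})=\dim(Y\setminus E)=\dim(X\setminus C)\leq\dim(X)$ and the complement drops strictly by axiom (iv); and its legs $C\closedrightarrow X$ and $\overline{Y\setminus E}\twoheadrightarrow X$ lie in the sieve $\langle j,p\rangle$ of the original square. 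That is the required simple $A'$-cover. Your invocation of the denseness lemmas \ref{lemm:app_dense}--\ref{lem:app_standard} and of the component decomposition points at the right ingredients, but without reformulating the goal as refinement by reducing squares the argument does not go through.
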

\begin{proof}
 As in the proof of \cite[Lemma 6.10]{kuij_descent}, we define $A'$ to be the set of abstract blowup squares in $\C$ where $\dim(Y)\leq \dim(X)$ and $\dim(E)<\dim(X)$. As in the cited proof, we see that for $X$ in $\C$, the irreducible components form a simple $A'$-cover $\{X_i\closedrightarrow X\}$. Now we need to show that for any abstract blowup square (\ref{eq:app_abs_blowup}) in $\C$, the sieve $\langle j,p \rangle$ contains a simple $A'$-cover, where we can assume $X$ is irreducible. Let $\overline{Y \setminus E} \closedrightarrow Y$ be the denote the closure of $Y \setminus E \hookrightarrow Y$. Then 
\begin{equation}\label{eq:dimension_abs_blowup}
    \begin{tikzcd}
        \overline{Y \setminus E} \setminus (Y \setminus E) \arrow[d, two heads] \arrow[r, two heads, tail] & \overline{Y \setminus E} \arrow[d, two heads]\\
        C \arrow[r, two heads, tail] & X
    \end{tikzcd}
\end{equation}
is a pullback square by Lemma \ref{lem:app_pullback_lem}, and by construction an abstract blowup. Since $Y\setminus E \hookrightarrow \overline{Y\setminus E}$ is dense, we have  $\dim( \overline{Y\setminus E}) = \dim({Y\setminus E})$ and $\dim (\overline{Y \setminus E}\setminus (Y \setminus E))<\dim(Y \setminus E)$. This, combined with the fact that $\dim(Y \setminus E) = \dim(X \setminus C) \leq \dim(X)$, shows that (\ref{eq:dimension_abs_blowup}) is in $A'$, and the arrows $C \closedrightarrow X$ and $\overline{Y \setminus E}\twoheadrightarrow X$ are in $\langle j,p \rangle$. This shows that the cd-structure $A$ on $\C$ is compatible with the dimension function, and with the same proof we see that $AC$ is compatible with the dimension function as a cd-structure on $\comp(\C)$ or $\comp(\C)_0$. 

For $A\cup L$, as in the proof of \cite[Lemma 6.14]{kuij_descent} $L'\subseteq L$ denote the set of localization squares where $U\hookrightarrow X$ is dense. Then $\dim(X)=\dim(U)$ and $\dim(X\setminus U)<\dim(X)$. As in the cited proof, for any localization square (\ref{eq:app_localization}) we can show that the arrow $\overline{U}\hookleftarrow U$ is in the sieve $\langle \emptyset \to U, X \hookleftarrow U \rangle$. This, together whit the fact that $A$ is compatible with the dimension function, shows that $A\cup L$ is compatible with the dimension function.   
\end{proof}

\begin{lem}\label{lem:app_thm_2}
    Let $(\C,I,P)$ be a topological Nagata setup. Then the cd-structure $AC$ on $\comp(\C)$ is complete, and the cd-structures $AC$ on $\comp(\C)_0$ and $A\cup L$ on $\span(\C)$ are c-complete.
\end{lem}
\begin{proof}
 We follow the relevant parts of the proof of \cite[Lemma 6.16]{kuij_descent}. To show that $AC$ is complete as cd-structure on $\comp(\C)$, it suffices to observe that also in this general case, abstract blowup squares are stable under pullback. Pullbacks in $\comp(\C)$ are also pullbacks in $\comp(\C)_0$. With the additional observation that the pullback of a sieve along a zero morphism is the maximal sieve, and therefore a covering sieve, it follows that $AC$ is c-complete as cd-structure on $\comp(\C)_0$.

Now we show that $A$ is a c-complete cd-structure on $\span(\C)$. The non-trivial step here is to show that for $f:Z \hookleftarrow X$ an $I$-morphism in $\span(\C)$, and an abstract blowup square (\ref{eq:app_abs_blowup}), the sieve $f^*\langle j, p \rangle $ contains a simple $A$-cover. 

As in the cited proof, we factor $Y \xtwoheadrightarrow{p} X \hookrightarrow Z$ as a dense $I$-morphism followed by a $P$-morphism $Y \hookrightarrow Y' \xtwoheadrightarrow{p'} Z$. Let $\overline{C}\closedrightarrow Z$ be the closure of $C \closedrightarrow X \hookrightarrow Z$, then we can form the union $C':=\overline{C} \cup (Z \setminus X) \closedrightarrow Z$. Using (xv) in Definition \ref{defn:app_top_Nagata_setup}, we see that $Z \setminus C' = (Z \setminus \overline{C})\cap X$. We form the following diagram
\begin{center}
    \begin{tikzcd}
         X \cap \overline{C}  \arrow[r, tail, two heads] \arrow[d, hook]           & X \arrow[d, hook] &    Z \setminus C'    \arrow[d, hook] \arrow[l, hook']      \\
        \overline{C} \arrow[r, tail, two heads] & Z & Z \setminus \overline{C} \arrow[l, hook']
    \end{tikzcd}
\end{center}
by pulling back the open-closed decomposition $\overline{C} \closedrightarrow Z \hookleftarrow Z \setminus \overline{C}$ along $X \hookrightarrow Z$. This shows that $Z \setminus C' = X \setminus (X \cap \overline{C})$. Now, by (xii) in Definition \ref{defn:app_top_Nagata_setup}, there is a $J$-morphism $D\closedrightarrow Z$ such that $C = D\cap X$. Therefore there is a $J$-morphism $\overline{C} \closedrightarrow D$, and this implies that $X \cap \overline{C} \subseteq X \cap D = C$. Hence, we conclude $Z \setminus C' = X \setminus C$. 

Now, we have a square 
\begin{equation}\label{eq:completeness}
    \begin{tikzcd}
       Y \arrow[r,hook ]\arrow[d, two heads, "p"'] & Y' \arrow[d, two heads, " p'" ] \\
       X \arrow[r, hook] & Z
    \end{tikzcd}
\end{equation}
which is a pullback square by Lemma \ref{lem:app_pullback_lem}, and this implies that the inverse image of $Z \setminus C'= X\setminus C$ under $p'$ is contained in $Y$ and is the same as $p^{-1}(X \setminus C) = Y \setminus E$. We conclude that the pullback square
\begin{center}
    \begin{tikzcd}
        C' \times_Z Y' \arrow[d, two heads] \arrow[r, tail, two heads] &  Y' \arrow[d, two heads, "p'"] \\
        C' \arrow[r, tail, two heads] & Z
    \end{tikzcd}
\end{center}
is an abstract blowup square, since $(p')^{-1}(Z \setminus C') = Y \setminus E$ is isomorphic to $Z \setminus C'=X \setminus C$. Now show that the simple $A$-cover generated by it, is contained in the $f^*\langle j, p \rangle $. The morphism $p'$ is in $f^*\langle j, p \rangle $ since (\ref{eq:completeness}) is a pullback square. The diagram of pullback squares 
\begin{center}
    \begin{tikzcd}
        C'\cap X \arrow[d, hook] \arrow[r, tail, two heads] & X \arrow[d, hook] & X \setminus C\arrow[d, equal] \arrow[l, hook'] \\
        C' \arrow[r, tail, two heads] & Z & Z \setminus C' = X \setminus C \arrow[l, hook']
    \end{tikzcd}
\end{center}
demonstrates that $C' \cap X = C$, since the top row is an open-closed decomposition by (viii) in Definition \ref{defn:app_top_Nagata_setup}. Therefore $C'\closedrightarrow Z$ is in  $f^*\langle j, p \rangle $ as well.

Lastly, to show that $L$ is a c-complete cd-structure on $\span(\C)$, we observe that the proof of Lemma \cite[Lemma 6.19]{kuij_descent} still works, where we use (xiii) in Definition \ref{defn:app_top_Nagata_setup} to construct $Y \cup_V Z$.
\end{proof}

\begin{lem}\label{lem:app_thm_3}
    Let $(\C,I,P)$ be a topological Nagata setup. Then the cd-structure $AC$ on $\comp(\C)$ is regular, and the cd-structures $AC$ on $\comp(\C)_0$ and $A\cup L$ on $\span(\C)$ are c-regular.
\end{lem}
\begin{proof}
  We start by showing that $A$ is a regular cd-structure on $\C$. It suffices to show that for abstract blowup square (\ref{eq:app_abs_blowup}), the square
\begin{center}
    \begin{tikzcd}
       E \arrow[d, two heads] \arrow[r, two heads, tail] & Y \arrow[d, two heads]\\
        E \times_C E \arrow[r, two heads, tail] & Y \times_X Y
    \end{tikzcd}
\end{center}
is an abstract blowup square. This is done for algebraic varieties in \cite[Lemma 2.14]{voevodskycdh}, but we can generalize this proof as follows. Since diagonals are monomorphisms, the vertical maps in the square above are in $J$ as well. Therefore it suffices to show that $Y\times_X Y = Y \cup (E \times_C E)$. Now we consider the diagram
\begin{center}
    \begin{tikzcd}
        E\times_C E \arrow[r, two heads,tail] \arrow[d, two heads] & Y\times_X Y \arrow[d, two heads] & Y\setminus E \times_{X\setminus C} Y \setminus E \arrow[d, two heads] \arrow[l, hook] \\
        E \arrow[r, tail, two heads] & Y & Y \setminus E \arrow[l, hook]
    \end{tikzcd} 
\end{center}
where the vertical maps are all projections onto the first factor of the fiber product. Both the squares are pullbacks, showing that the top row is an open-closed decomposition. This shows that $$Y\times_X Y \setminus (E \times_C E) \cong Y\setminus E \times_{X\setminus C} Y \setminus E\cong Y\setminus E.$$
Now we consider the diagram
\begin{center}
    \begin{tikzcd}
      (  Y \times_X Y \setminus Y) \cap (Y \times_X Y \setminus E \times_C E) \arrow[r, hook] \arrow[d, hook] &  Y \times_X Y \setminus E \times_C E \arrow[d, hook] & Y \setminus E \arrow[l, two heads, tail, "\cong"'] \arrow[d, hook] \\
      Y \times_X Y \setminus Y \arrow[r, hook] & Y \times_X Y & Y \arrow[l, two heads, tail]
    \end{tikzcd}
\end{center}
where again the squares are pullbacks and therefore the top row an open-closed decomposition. This shows that $ (  Y \times_X Y \setminus Y) \cap (Y \times_X Y \setminus E \times_C E) =\emptyset$, and hence by (xv) in Definition \ref{defn:app_top_Nagata_setup} also $Y\times_X Y \setminus (Y \cup E\times_C E) = \emptyset$, in other words $Y\times_X Y = Y \cup E \times_C E$, as desired. By the same arguments, we see that $AC$ is a regular cd-structure on $\comp(\C)$.

To show that $A\cup L$ is a c-regular cd-structure, we follow the proof of \cite[Lemma 6.23]{kuij_descent}. It suffices to show that for a localization square (\ref{eq:app_localization}), there is an epimorphism of $\tau^c_{A\cup L}$-separated presheaves
$$\phi:y_{X} \amalg y_{X\setminus U} \times_{y_{\emptyset}} y_{X \setminus U} \to Y_{X}\times_{y_{U}} y_X.$$
Generalizing the proof of \cite[Corollary 6.27]{kuij_descent}, we see that for any Grothendieck topology with jointly epic covers, representable presheaves are separated. By the same arguments as in \cite[Lemma 6.26]{kuij_descent}, we see that $\tau^c_{A\cup L}$-covers are jointly epic (where we use that $X \hookleftarrow U$ is an epimorphism in $\span(\C))$. To show that $\phi$ is an epimorphism of $\tau^c_{A\cup L}$-separated presheaves, by \cite[Lemma 6.25]{kuij_descent} it suffices to show that $\phi$ is locally surjective. Lastly, to show that $\phi$ is locally surjective, we can almost follow the proof of \cite[Lemma 6.28]{kuij_descent} line by line, using Lemma \ref{lem:app_standard}. One thing we do need to show, is that when $p,p':V\cap V' \to Y$ agree after restricting to a dense morphism $V'' \hookrightarrow V \cap V'$, then $p=p'$. Indeed, let $P$ be the pullback 
\begin{center}
    \begin{tikzcd}
        P: = (V\cap V') \times_{Y \times Y} Y \arrow[d, tail, two heads] \arrow[r] & Y \arrow[d, two heads, tail, "\Delta"]\\
        V \cap V' \arrow[r, "{\langle p, p' \rangle}"']& Y \times Y
    \end{tikzcd}
\end{center}
where the diagonal map on the right is in $J$ by (xvi) in Definition \ref{defn:app_top_Nagata_setup}. Since the square
\begin{center}
    \begin{tikzcd}
        V'' \arrow[d, hook] \arrow[r, "p=p' "] & Y \arrow[d, two heads, tail, "\Delta"]\\
        V \cap V' \arrow[r, "{\langle p, p' \rangle}"']& Y \times Y
    \end{tikzcd}
\end{center}
commutes, there is a map $V''\to P$ making
\begin{center}
    \begin{tikzcd}
        V'' \arrow[r] \arrow[dr,hook] & P \arrow[d, two heads, tail]\\
        & V \cap V'
    \end{tikzcd}
\end{center}
commute. Therefore the closure $\overline{V''}\closedrightarrow V\cap V'$ must factors through $P \closedrightarrow V \cap V'$. Since $V''$ is dense in $V \cap V'$, this implies $P = V \cap V'$ and $p=p'$ on all of $V \cap V'$.   
\end{proof}
We can now re-prove the relevant results of \cite{kuij_descent} for $\span(\C)$ and $\comp(\C)$ where $\C$ is an arbitrary noetherian Nagata setup.
\begin{prop}\label{prop:excision_is_descent}
    Let $(\C,I,P)$ be a noetherian Nagata setup and let $\D$ be a complete $\infty$-category. Then the $\infty$-categories $$\hsh_{\tau^c_{A\cup L}}(\span(\C),\D)\ \ \textup{ and }\ \ \hsh_{\tau^c_{AC}}(\comp(\C)_0;\D)$$ are equivalent to the categories of presheaves that send squares in $A\cup L$, respectively squares in $AC$, to pullback squares. The $\infty$-category 
    $$\hsh_{\tau_{AC}}(\comp(\C);\D)$$
    is equivalent to the category of presheaves $F$ that satisfy $F(\emptyset)\simeq *$ and that send squares in $AC$ to pullback squares.
\end{prop}
\begin{proof}
    This follows from \cite[Proposition 3.8]{voevodsky}, \cite[Proposition 3.6 and Proposition 4.7]{kuij_descent}.
\end{proof}
\begin{rmk}
   In fact, the category of presheaves on $\comp(\C)$ that send $\emptyset$ to the terminal object and $AC$ squares to pullback squares, also coincides with the category of sheaves for $\tau_{AC}$, as the topos $\sh(\comp(\C);\S$ is hypercomplete.  For the given topologies on $\span(\C)$ and $\comp(\C)_0$ this is not known; the standard arguments showing that excision is equivalent to \v{C}ech descent (\cite[Corollary 5.10]{voevodsky}, \cite[Theorem 3.2.5]{AHW}) do not apply since pullbacks in these categories are not well-behaved. 
\end{rmk}
In the following proposition and its proof, $\hsh_{\tau^c_{A\cup L}}(\span(\C);\D)_\emptyset$ and $\hsh_{\tau_{AC^c}}(\comp(\C)_0;\D)$ denote the categories of $\D$-valued hypersheaves $F$ such that $F(\emptyset\simeq *)$ is equivalent to the terminal object of $\D$. 
\begin{prop}\label{prop:descent_principle_general}
  Let $(\C,I,P)$ be a noetherian Nagata setup, and let $\D$ be a cocomplete, pointed $\infty$-category. Then the inclusion $\comp(\C) \to \span(\C)$ induces an equivalence of $\infty$-categories
  $$\hsh_{\tau^c_{A\cup L}}(\span(\C);\D)_\emptyset\simeq \hsh_{\tau_{AC}}(\comp(\C);\D) $$
\end{prop}
\begin{proof}
   To show that $$\hsh_{\tau^c_{A\cup L}}(\span(\C);\D)_\emptyset\simeq \hsh_{\tau^c_{AC}}(\comp(\C)_0;\D)_\emptyset$$  we need to show that the functor
 $$\comp(\C)_0\to \span(\C) $$
 satisfies the conditions of the Comparison Lemma (\cite{comparison}) with respect to the topologies $\tau^c_{AC}$ and $\tau^c_{A\cup L}$. For this we can use the exact same proof as in \cite[Lemma 7.8]{kuij_descent}. We need property (ix) of Definition \ref{defn:app_top_Nagata_setup}, and also the fact that for any $X$ in $\C$, the map $X\to *$ can be written as an $I$-morphism followed by a $P$-morphism $X \hookrightarrow \overline{X}\twoheadrightarrow *$.

 To see that $\hsh_{\tau^c_{AC}}(\comp(\C)_0;\D) \simeq \hsh_{\tau_{AC}}(\comp(\C);\D) $ we can use \cite[Proposition 5.2]{kuij_descent}. This shows that there is an equivalence of categories 
    $$\fun(\comp(\C);\D)_\emptyset \to \fun(\comp(\C)_0,\D)_\emptyset$$
   and it is clear that this restricts to an equivalence of categories of presheaves that send squares in $AC$ to pullback squares.
\end{proof}

\subsection{Local six-functor formalisms}\label{subsect:local_6ff}
In this subsection, we fix a noetherian Nagata setup $(\C,I,P)$. On this setup, we define a subcategory $\textup{\textbf{6FF}}^{\loc}\subseteq   \textup{\textbf{6FF}}$ whose objects we call \textit{local six-functor formalisms}. Their distinguishing features are that they take values in presentable, stable $\infty$-categories, and that for $i:U\hookrightarrow X$ in $I$, the diagram 
\begin{equation}\label{eq:recollement}
    \begin{tikzcd}[sep = large]
D(X\setminus U) \arrow[r, "j_* \cong j_!" description] &  D(X) \arrow[l, "j^ !", shift left=2] \arrow[l, "j^ *"', shift right=2] \arrow[r, "i^!\cong i^*" description] &  D(U) \arrow[l, "i_!"', shift right=2] \arrow[l, "i_*", shift left=2]
\end{tikzcd}
\end{equation}
is a recollement. 

In this subsection, we identify the category of local six-functor formalisms with a subcategory of symmetric monoidal hypersheaves on $\span(\C)$, using the equivalence proven in Proposition \ref{prop:w6FF}.

\begin{defn}\label{defn:6ff_unstable}
    Let $\textup{\textbf{6FF}}^\loc\subseteq   \textup{\textbf{6FF}}$ be the subcategory spanned by Nagata six-functor formalisms 
    $$D:{\corr(\C)}^\otimes \to \cat $$
    such that
    \begin{enumerate}[label=(\roman*)]
    \item for every $X$ in $\C$, $D(X)$ is a presentable stable $\infty$-category, 
        \item$D(\emptyset)$ is the trivial one-object category $\mathbbm{1}$,
        \item for every localization square (\ref{eq:app_localization}), seen as a square in $\corr(\C)$, the diagram 
        \begin{center}
            \begin{tikzcd}
                D(U) \arrow[r, "i^!"] \arrow[d, "e^*"] & D(X) \arrow[d, "j^*"]\\
                D(\emptyset) \arrow[r, "f_!"] & D(X \setminus U)
            \end{tikzcd}
        \end{center}    
         is a pullback square in $\cat$,
    \end{enumerate}
    and by morphisms that are component-wise colimit-preserving. We call objects in this category \textit{local six-functor formalisms}.
    \end{defn}
The following justifies why we think of local six-functor formalisms as ``Nagata six-functor formalisms with recollements''.
\begin{lem}\label{lem:fiber_wordt_recollement}
  Let $D:{\corr(\C)^{\otimes}}\to \cat$ be a local six-functor formalism. Then for $i:U\hookrightarrow X$ an in $I$, the diagram (\ref{eq:recollement}) is a recollement
\end{lem}
\begin{proof}
Let $j:X\setminus U \to X$ be the complement of $i$. Since $i_!$ and $j^*$ are both left adjoints, in particular they are exact functors between stable $\infty$-categories. Since limits in $\cat^\ex$ are computed in $\cat$, we have that
\begin{equation}\label{eq:fiber}
 D(U)\xrightarrow{i_!}D(X) \xrightarrow{j^*} D(X\setminus U)   
\end{equation}
is a fiber sequence not only in $\cat$, but also in $\cat^\ex$. Now we consider the sequence
\begin{equation}
\begin{tikzcd}
    X\setminus U \arrow[r, tail, two heads, "j"] & X & X \setminus U \arrow[l, tail, two heads, "j"']
\end{tikzcd}
\end{equation}
in the $\infty$-category $\corr(\C)$, which composes to
$$X\setminus U \xlongrightarrow{\id} X\setminus U.$$
This implies that the composition 
$$D(X\setminus U) \xlongrightarrow{j_\natural} D(X)\xlongrightarrow{j^*} D(X\setminus U)$$
is the identity, so the co-unit $j^*j_\natural \Rightarrow \id$ is an isomorphism by \cite[Lemma A.1.1.1]{elephant}, which implies that $j_\natural$ is fully faithful. Therefore, by \cite[Lemma A.2.5]{nine}, we have that the fiber sequence (\ref{eq:fiber}) in $\cat^\ex$
is a Verdier sequence, and that moreover the the sequence formed by the right adjoints is a Verdier sequence. This implies that the diagram (\ref{eq:recollement}) is a recollement. \end{proof}

By classical arguments, we can show that local six-functor formalisms satisfy proper cdh-descent. Recall that we defined abstract blowup squares in arbitrary topological Nagata setups in Definition \ref{def:abs_blowup_square}.

\begin{lem}\label{lem:proper_cdh_descent}
    For $D$ a local six-functor formalism, $D$ sends abstract blowup squares to pullbacks.
\end{lem}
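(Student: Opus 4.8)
The plan is to show that applying the $(-)^*$-part of $D$ to the abstract blowup square produces a cartesian square in $\cat$; since only colimit-preserving functors are involved and limits in $\PrSt$ are computed in $\cat$, this is the same as a pullback in $\PrSt$. Writing $U := X \setminus C$, and recalling that the blowup condition gives $Y \setminus E = p^{-1}(U) \cong U$, the square obtained from the blowup square is

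\begin{center}
\begin{tikzcd}
D(X) \arrow[r, "p^*"] \arrow[d, "j_C^*"'] & D(Y) \arrow[d, "j_E^*"] \\
D(C) \arrow[r, "q^*"'] & D(E)
\end{tikzcd}
\end{center}

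which commutes up to canonical equivalence because $p \circ j_E = j_C \circ q$ and $(-)^*$ is functorial on $\var^\op$ (Proposition \ref{prop:w6FF}). All four arrows are values of $(-)^*$ on morphisms of $P$, hence left--left adjoints, so in particular exact. The goal is the equivalence $D(X) \xrightarrow{\sim} D(Y) \times_{D(E)} D(C)$.

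First I would use localization. By Lemma \ref{lem:fiber_wordt_recollement} the open immersion $U \hookrightarrow X$ with closed complement $C$ yields a Verdier sequence $D(U) \xrightarrow{i_{X,!}} D(X) \xrightarrow{j_C^*} D(C)$ with $D(U) \simeq \fib(j_C^*)$, and similarly $U \hookrightarrow Y$ with complement $E$ yields $D(U) \xrightarrow{i_{Y,!}} D(Y) \xrightarrow{j_E^*} D(E)$. I would then check that $p^*$ defines a map from the first Verdier sequence to the second: on the quotients it is $q^*$ by the commutativity above, and on the kernels it is an equivalence. The latter uses base change: the square with corners $U, Y, U, X$ (the pullback of $p$ along the open immersion $U \hookrightarrow X$, whose left edge is the isomorphism $p|_U$) is cartesian, so $p^* i_{X,!} \simeq i_{Y,!}$, and hence $p^*$ restricts on fibers to the identity $D(U) \xrightarrow{\sim} D(U)$. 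This exhibits a map of Verdier sequences that is an equivalence on kernels.

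Finally I would conclude that a map of Verdier sequences which is an equivalence on kernels induces a cartesian square on totals and quotients --- which is exactly the square above. Concretely, the comparison functor $\Phi : D(X) \to D(Y) \times_{D(E)} D(C)$ is a functor over $D(C)$: its target projects to $D(C)$ by the base change $\mathrm{pr}_{D(C)}$ of the Verdier projection $j_E^*$ (so $\fib(\mathrm{pr}_{D(C)}) \simeq \fib(j_E^*) \simeq D(U)$), its source projects by the Verdier projection $j_C^*$ (with fiber $D(U)$), and $\Phi$ induces the identity on fibers by the identities $p^* i_{X,!} \simeq i_{Y,!}$ and $j_C^* i_{X,!} = 0$; an equivalence over a common base that is an equivalence on the fibers of two Verdier projections is itself an equivalence. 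The main obstacle is making this last step rigorous: $\cat^\ex$ is not stable, so one cannot simply invoke ``equivalence on fibre and cofibre implies equivalence''. I would either extract the needed descent statement for Verdier sequences from the results on recollements in \cite[Section A.2]{nine} (the same source feeding Lemma \ref{lem:fiber_wordt_recollement}), verifying in particular that $\mathrm{pr}_{D(C)}$ is again a Verdier projection with fiber $D(U)$; or run the classical gluing argument by hand, reconstructing an object of $D(X)$ from its open part $i_Y^* \mathcal{G}$, its closed part $\mathcal{F}$, and the gluing datum $\phi$ of a point $(\mathcal{G}, \mathcal{F}, \phi) \in D(Y) \times_{D(E)} D(C)$, using the recollement bifiber sequence $i_{X,!} i_X^* \to \id \to j_{C,*} j_C^*$.
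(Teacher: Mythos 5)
Your setup is sound: the rows $D(U)\xrightarrow{i_{X,!}}D(X)\xrightarrow{j_C^*}D(C)$ and $D(U)\xrightarrow{i_{Y,!}}D(Y)\xrightarrow{j_E^*}D(E)$ are Verdier sequences by Lemma \ref{lem:fiber_wordt_recollement}, and $p^*i_{X,!}\simeq i_{Y,!}$ holds by functoriality on correspondences, so $p^*$ is indeed a map of Verdier sequences that is an equivalence on kernels. The gap is the final principle you invoke, and it is not merely a question of making things rigorous: the statement ``a map of Verdier sequences which is an equivalence on kernels induces a cartesian square on totals and quotients'' (equivalently, ``a functor over a common base which is an equivalence on the fibers of two Verdier projections is an equivalence'') is false. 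Let $\mathrm{Sp}$ denote spectra and compare the Verdier sequences $\mathrm{Sp}\xrightarrow{(\id,0)}\mathrm{Sp}\times\mathrm{Sp}\xrightarrow{\mathrm{pr}_2}\mathrm{Sp}$ and $\mathrm{Sp}\xrightarrow{x\mapsto(x\to 0)}\Fun(\Delta^1,\mathrm{Sp})\xrightarrow{\mathrm{ev}_1}\mathrm{Sp}$ via the exact functor $\phi(x_0,x_1)=(x_0\xrightarrow{0}x_1)$: it commutes with the inclusions and projections and is the \emph{identity} on kernels and on quotients, yet $\phi$ is not essentially surjective (the identity arrow of a nonzero spectrum is not in its image), so the associated square is not cartesian. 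The underlying reason is that a stable $\infty$-category sitting in a recollement is not determined by its open and closed pieces; the gluing functor $j^*i_*$ is part of the data.

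What your argument is missing is therefore exactly the compatibility of $p^*$ with the gluing functors: one must check that the canonical map $j_C^*\,i_{X,*}\to q_*\,j_E^*\,i_{Y,*}$ is an equivalence, since the pullback $D(Y)\times_{D(E)}D(C)$ is the lax gluing of $D(U)$ and $D(C)$ along $q_*\,j_E^*\,i_{Y,*}$, while $D(X)$ is the lax gluing along $j_C^*\,i_{X,*}$. This follows from base change for the \emph{pushforwards}, $j_C^*\,p_*\simeq q_*\,j_E^*$ (available because $D$ is a Nagata formalism, so $p_*\simeq p_!$, $q_*\simeq q_!$, and base change holds for the shrieks), combined with $p\circ i_Y=i_X$; note that your proposal only ever uses base change for the $!$-functors, which is why it cannot suffice, and is also why your counexample-immune-looking fallback (ii) with the bifiber sequence $i_{X,!}i_X^*\to\id\to j_{C,*}j_C^*$ cannot be completed without adding this input. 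This missing compatibility is precisely the ``second criterion'' in the paper's proof, which verifies the hypotheses of \cite[Proposition 5.17]{Dagvii} following \cite[Proposition 6.24]{HOYOIS2017197}: joint conservativity of $j_C^*$ and $p^*$ (which you essentially have, from the recollement and $i_X^*=i_Y^*p^*$) plus an object-level gluing condition, both deduced ``from the existence of recollements and base change'' --- where base change for the pushforwards along the proper maps is the essential point.
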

\begin{proof}
This is a standard argument that can be found for example in the proof of \cite[Proposition 6.24]{HOYOIS2017197}. The crux is that we can use the criteria of \cite[Proposition 5.17]{Dagvii} to check whether for an abstract blowup square
\begin{center}
  \begin{tikzcd}
             E \arrow[d, two heads] \arrow[r, two heads,tail, "k"]&Y\arrow[d, two heads, "p"]\\
             C \arrow[r, two heads,tail, "i"'] & X
         \end{tikzcd}  
\end{center}
the square 
\begin{center}
    \begin{tikzcd}
        D(X) \arrow[d, "p^*"] \arrow[r, "i^*"] & D(C) \arrow[d] \\
        D(Y) \arrow[r, "k^*"]& D(E)
    \end{tikzcd}
\end{center}
is a pullback square in $\cat$. One of the criteria is that $i^*$ and $p^*$ are jointly conservative. The other criteria is stated purely in terms of objects and morphisms in the involved categories. As in the proof of \cite[Proposition 6.24]{HOYOIS2017197}, both follow from the existence of recollements and base change.
\end{proof}

 The following definition expands on Definition \ref{defn:BC_span_compleet}.
\begin{defn}\label{defn:bc_span_sheaf}
We denote by 
$$\fun^{\lax,\cl}_{BC,A\cup L}(\span(\C)_\times^\op\mid \span(\C)^\op,\cat\mid\PrSt^{LL})_\emptyset \subseteq \fun^{\lax,\cl}_{BC}(\span(\C)_\times^\op\mid \span(\C)^\op,\cat\mid\cat^{LL})_\emptyset $$
the subcategory spanned by lax cartesian structures $F:\span(\C)_\times^\op\to \cat$  such that in addition to the conditions in Definition \ref{defn:BC_span_compleet}, 
 \begin{enumerate}
        \item[(iv)]  for every $X$ in $\C$, $F(X)$ is a presentable stable $\infty$-category,
        \item[(v)]
        for the initial object $\emptyset$ we have $F(\emptyset) = \mathbbm{1}$, the trivial one-object stable $\infty$-category,
        \item[(vi)] the restriction $F|_\span:\span(\C)^\op\to \cat$ is a hypersheaf for the topology $\tau^c_{A\cup L}$,
    \end{enumerate}
        and by natural transformations $\alpha:F \to F'$ that are compatible with the right adjoints, and objectwise colimit-preserving.
\end{defn}
By Proposition \ref{prop:excision_is_descent}, a presheaf $F:\span(\C)^\op \to \D$ with values in any complete $\infty$-category is a $\tau^c_{A\cup L}$-hypersheaf if and only if it sends localization squares and abstract blowup squares to pullback squares. With this, and with Lemma \ref{lem:fiber_wordt_recollement} and Lemma \ref{lem:proper_cdh_descent}, it is easy to see the following.
\begin{lem}\label{lem:6ff}
The equivalence $$e^*: \textup{\textbf{6FF}} \xlongrightarrow{\sim}\fun^{\lax,\cl}_{BC}(\span(\C)_\times^\op\mid \span(\C)^\op,\cat\mid\cat^{LL}) $$ in Proposition \ref{prop:w6FF} restricts to an equivalence 
$$ e^*:\textup{\textbf{6FF}}^{\loc}\xlongrightarrow{\sim }\fun^{\lax,\cl}_{BC,A\cup L}(\span(\C)_\times^\op\mid \span(\C)^\op,\cat\mid\PrSt^{LL})_\emptyset .$$
\end{lem}
By the following lemma, we can identify $\textup{\textbf{6FF}}^\loc$ with a subcategory of $\infty$-operad maps $\span(\C)_\times^\op\to \PrSt^\otimes$ instead.
\begin{lem}\label{lem:lift_applied}
For $F:\span(\C)_\times^\op \to \cat$ a lax cartesian structure in $$\fun^{\lax,\cl}_{BC,A\cup L}(\span(\C)_\times^\op\mid \span(\C)^\op,\cat\mid\PrSt^{LL})_\emptyset,$$ the corresponding $\infty$-operad map $\tilde F:\span(\C)_\times^\op \to \cat^\otimes$ factors through $\PrSt^\otimes$. 
\end{lem}
\begin{proof}
For $F:\span(\C)_\times^\op \to \cat$ in $\fun^{\lax,\cl}_{BC,A\cup L}(\span(\C)_\times^\op\mid \span(\C)^\op,\cat^{\ex}\mid\PrSt^{LL})_\emptyset$, it is already clear that $F(X)$ is a stable presentable $\infty$-category. We show that for the associated $\infty$-operad map $\tilde F:\span(\C)_\times \to \cat^\otimes$, the conditions of Lemma \ref{lem:lift} are satisfied. It is clear that conditions (i) and (ii) are satisfied. 
By Proposition \ref{prop:3FF}, we know that $F$ can be extended to a Nagata three-functor formalism (in fact, to a Nagata six-functor formalism) $D:{\corr(\C)^{\otimes}}\to \cat$. To see that $F$ satisfies condition (iii), it suffices to show this for $D$. Let $X,Y$ be in $\C$. We have a commutative diagram \begin{center}
    \begin{tikzcd}
        (X,Y)_{\underline 2} &   &  \\
        (X\times Y, X\times Y)_{\underline{2}} \arrow[u, "{(p_X,p_Y)}"] & (X\times Y \times X \times Y)_{\underline 1} \arrow[l] & (X\times Y)_{\underline 1} \arrow[l, "\Delta"] \arrow[llu]
    \end{tikzcd}
\end{center}
 in ${\corr(\C)^{\otimes}}$. Using the equivalences $D((X,Y)_{\underline 2}) \simeq D(X)\times D(Y)$ and $D((X\times Y,X\times Y)_{\underline 2}) \simeq D(X\times Y)\times D(X\times Y)$, the diagram evaluates to
\begin{center}
\begin{tikzcd}
D(X)\times D(Y) \arrow[d, "p_X^*\times p_Y^*"'] \arrow[rrrd]        &  &  &                                        \\
D(X\times Y)\times D(X\times Y) \arrow[rr] \arrow[rrr, bend right, "\otimes"]&  & D(X\times Y\times X \times Y) \arrow[r, "\Delta^*"'] &  D(X\times Y) 
\end{tikzcd}
\end{center}
where the composite of the lower horizontal arrows gives the symmetric monoidal structure on $D(X\times Y)$. By assumption, $D(X\times Y)$ is a closed symmetric monoidal $\infty$-category, so this composite preserves colimits in both variables. Since $p_X^*$ and $p_Y^*$ are both left adjoints and therefore colimit-preserving, it follows that the map 
$$D(X)\times D(Y)\to D(X \times Y) $$ preserves colimits in each variable, which is what we needed to show. 
\end{proof}
We define the following subcategory of the category of $\infty$-operad maps $\alg_{\span(\C)^\op}(\PrSt)$.
\begin{defn}\label{defn:alg_span_BC}
    Let $\alg_{\span(\C)^\op}^{BC,A\cup L}(\PrSt) \subseteq \alg_{\span(\C)^\op}(\PrSt)$ be the subcategory spanned by  $\infty$-operad maps $F:\span(\C)_\times^\op \to \PrSt^\otimes$ such that  \begin{enumerate}[label=(\alph*)]
        \item for $p:X\to Y$ in $P$, the image $p^*:F(Y)\to F(X)$ is a left-left adjoint, and we denote the adjoints by $p^*\vdash p_\natural \vdash p^!$,
        \item for $i:U\hookrightarrow X$ in $I$, the image $i_!:F(U)\to F(X)$ is a left-left adjoint, and we denote the adjoints by $i_!\vdash i^\natural\vdash i_*$,
       \item squares of types  $(I_-,I_-)$, $(I_-,P_\times^\op)$, $(P_-^ \op,P_\times^\op)$ and $(P_-^\op,I_-)$ are sent to vertically right-adjoinable squares in $\PrSt^\otimes$,
       \item  we have $F(\emptyset) = \mathbbm{1}$, the trivial one-object category,
       \item the underlying functor $F:\span(\C)^\op \to \PrSt$ is a hypersheaf for the topology $\tau^c_{A\cup L}$,
    \end{enumerate}
    and by natural transformations $\alpha:F \to F'$ that are compatible with the right adjoints $p_\natural$ and $i^\natural$.
\end{defn}
\begin{prop}\label{prop:6ff_to_span}
    There is an equivalence of $\infty$-categories 
$$\textup{\textbf{6FF}}^\loc\simeq\alg_{\span(\C)^\op}^{BC,A\cup L}(\PrSt)_\emptyset.$$
\end{prop}
\begin{proof}
    By Lemma \ref{lem:6ff}, suffices to show that $\alg_{\span(\C)^\op}^{BC,A\cup L}(\PrSt)_\emptyset$ is equivalent to $\fun^{\lax,\cl}_{BC,A\cup L}(\span(\C)_\times^\op\mid \span(\C)^\op,\cat\mid\PrSt^{LL})_\emptyset$. To be more precise, we want to show that the equivalence of $\infty$-categories
    $$\pi_*:\alg_{\span(\C)^\op}(\cat) \to \fun^\lax(\span(\C)_\times^\op,\cat)$$
    from \cite[Proposition 2.4.1.7]{HA}, restricts to an equivalence of $\infty$-categories
    $$\pi_*:\alg_{\span(\C)^\op}^{BC,A\cup L}(\PrSt)_\emptyset \xrightarrow{\sim} \fun^{\lax,\cl}_{BC}(\span(\C)_\times^\op\mid \span(\C)^\op,\cat\mid\cat^{LL}).$$
  
    Let $F$ be in $\alg_{\span(\C)^\op}(\cat)$, and and let $G$ be the lax cartesian structure $\pi_* F$. First, suppose that $F$ is in $\alg_{\span(\C)^\op}^{BC,A\cup L}(\PrSt)_\emptyset$. Then (c) implies that condition (B) in Definition \ref{defn:bc_span} holds for $G$, and (a) and (b) immediately imply conditions (i) and (ii) in Definition \ref{defn:BC_span_compleet}. To see that (iii) holds, we recall that for $X$ in $\C$, the symmetric monoidal structure on $F(X) = G(X)$ is given by 
    $$F(X) \times F(X) \xrightarrow{\otimes} F(X \times X) \xrightarrow{\delta^*} F(X).$$
By assumption, $\otimes$ preserves colimits in every variable, and $\Delta^*$ is a left adjoint and therefore colimit-preserving. Since $F(X)$ is presentable, it follows that for $A$ in $F(X)$, $A\otimes - $ is a left adjoint. Lastly, it is immediate that conditions (iv) in Definition \ref{defn:bc_span_sheaf} hold, and (v) and (vi) follow from (d) and (e).

On the other hand, suppose $G$ is in $\fun^{\lax,\cl}_{BC}(\span(\C)_\times^\op\mid \span(\C)^\op,\cat\mid\cat^{LL})$. Then Lemma \ref{lem:lift_applied} shows that $F$ is in $\alg_{\span(\C)^\op}(\PrSt)$. Condition (B) in Definition \ref{defn:bc_span} implies (c), and that (a) and (b) hold for $F$, follows from conditions (i) and (ii) in Definition \ref{defn:BC_span_compleet}. Condition (d) and (e) follow from (v) and (vi) in Definition \ref{defn:bc_span_sheaf}.
\end{proof}

\subsection{A symmetric monoidal comparison lemma}\label{subsect:monoidal_compact_support}
The most important ingredient that we need to proof further results about local six-functor formalisms, is a version of the comparison lemma that deals with ``lax symmetric monoidal sheaves''. For $f:(\C',\tau') \to (\C,\tau)$ a morphism between sites, the classical comparison lemma gives sufficient conditions such that $f$ induces an equivalence of categories
$$f^*:\sh_\tau(\C;\set) \xrightarrow{\sim} \sh_{\tau'}(\C';\set).$$
One can ask the following question. If in this setup $\C'$ and $\C$ are symmetric monoidal 1-categories, and is $f$ a strong symmetric monoidal functor, then we wonder when $f$ induces an equivalence between categories of sheaves that are also lax symmetric monoidal functors.

\begin{defn}\label{defn:otimes_stable}
  Let $(\C,\otimes)$ be a symmetric monoidal 1-category and $P$ a cd-structure on $\C$. Then $P$ is called a $\otimes$-\textit{stable} cd-structure if
for a distinguished square
    \begin{equation}\label{eq:gendistsquare}
        \begin{tikzcd}
            B \arrow[r]\arrow[d]&A\arrow[d] \\
            Y \arrow[r] & X
        \end{tikzcd}
    \end{equation}
    and an object $Z$ of $\C$, the square
       \begin{center}
        \begin{tikzcd}
            B \otimes Z \arrow[r]\arrow[d]&A \otimes Z\arrow[d] \\
            Y\otimes Z \arrow[r] & X \otimes Z
        \end{tikzcd}
    \end{center}
    is also distinguished.  
 \end{defn}

\begin{lem}\label{lem:pre_comparison}
    let $\C$ be a symmetric monoidal 1-category equipped with a cd-structure $P$ that is $\otimes$-stable, (c-)complete, (c-)regular and compatible with a dimension function. Let $\D^\otimes$ be a complete presentable symmetric monoidal $\infty$-category. Then the localization functor
$$L:\psh(\C;\D) \to \hsh_\tau(\C;\D)$$
is a symmetric monoidal localization (\cite[Definition 3.3]{universality}) for the Day convolution product on $\psh(\C;\D)$, where $\tau$ is the (coarse) topology generated by $P$. 
\end{lem}
\begin{proof}
    We show this for the case that $P$ is a c-complete and c-regular cd-structure. First, we assume that $\D = \S$.
    
    By \cite[Proposition 4.7]{kuij_descent}, $\hsh_\tau(\C;\S)\subseteq \psh(\C;\S)$ is the full subcategory spanned by presheaves $F:\C^\op\to \S$ that send all distinguished squares to pullback squares. By the Yoneda lemma, this is the case if for every distinguished square (\ref{eq:gendistsquare}) the canonical morphism 
  \begin{equation}\label{eq:local_morph}
      y_B \sqcup_{y_A} y_C \to y_D
  \end{equation}
induces an equivalence
    $ \fun(y_D,F)\to  \fun(y_B \sqcup_{y_A} y_C).$
    In other words, $\hsh_\tau(\C;\S)$ is the localization of $\psh(\C;\S)$ at the class $S$ of all morphisms (\ref{eq:local_morph}) associated to distinguished squares. The Day convolution product preserves colimits in each variable, see \cite[Corollary 4.8.1.12]{HA}. Therefore, to demonstrate that $L$ is a symmetric monoidal localization, it suffices to show that $S$ is closed under Day convolution with representable presheaves. The Day convolution product of (\ref{eq:localization}) with a representable presheaf $y_X$ is 
    $$y_{B\otimes X} \sqcup_{y_{A\otimes X}} y_{C\otimes X} \to y_{D\otimes X},$$
    since the Yoneda embedding is strong symmetric monoidal with respect to Day convolution. By the assumption on $P$, this is again a morphism in $S$. 

    Now let $\D^\otimes$ be a complete presentable symmetric monoidal $\infty$-category. Then there is an equivalence $\psh(\C;\D)\simeq\psh(\C;\S)\otimes \D$ where $\otimes$ is the Lurie tensor product of presentable $\infty$-categories. We note that $\psh(\C;\S)\otimes \D$ gets an induced symmetric monoidal structure from the Day convolution product on $\psh(\C;\S)$ and the symmetric monoidal structure on $\D$, whereas $\psh(\C;\D)$ has a symmetric monoidal structure given by Day convolution. By \cite[Proposition 3.10]{BenMoshe2023} however, the equivalence $\psh(\C;\D)\simeq\psh(\C;\S)\otimes \D$ is symmetric monoidal.
    
    Similarly, $\hsh_\tau(\C;\D)\simeq\hsh_\tau(\C;\S)\otimes \D$, see also \cite[Proposition 4.5]{kuij_descent}. By \cite[Lemma 4.21]{BenMoshe2023}, the localization 
    $$L:\psh(\C;\D) \to \hsh_\tau(\C;\D)$$
    is therefore also a symmetric monoidal localization for the Day convolution product on $\psh(\C;\D)$. 
\end{proof}
Let $\psh(\C,\D)^\otimes$ denote the symmetric monoidal $\infty$-category encoding the Day convolution product (\cite[Definition 2.8]{Glasman}). By \cite[Lemma 3.4]{universality}, there is a symmetric monoidal structure $\hsh(\C;\D)^\otimes$ on $\hsh_\tau(\C;\D)$, and
$L:\psh(\C;\D)\to \hsh_\tau(\C;\D)$ extends to a strong symmetric monoidal functor 
$$ L^\otimes:\psh(\C;\D)^\otimes \to \hsh(\C;\D)^\otimes$$
that is left adjoint to the inclusion $\hsh(\C;\D)^\otimes\subseteq \psh(\C;\D)$, and this inclusion is lax symmetric monoidal.
\begin{nota}
    Let $(\C,\otimes)$ be a symmetric monoidal 1-category equipped with a Grothendieck topology $\tau$, and let $\D^\otimes$ be a symmetric monoidal $\infty$-category. Then we denote by 
    $$\alg^{\tau}_{\C^\op}(\D) \subseteq \alg_{\C^\op}(\D)$$
    the category of operad maps $(\C^\op)^\otimes\to \D^\otimes$ such that the underlying functor $\C^\op \to \D$ is a $\tau$-hypersheaf. We call such operad maps \textit{lax symmetric monoidal hypersheaves}.
\end{nota}
\begin{prop}
    Let $\C$ be a symmetric monoidal 1-category equipped with a cd-structure $P$ that is $\otimes$-stable, (c-)complete, (c-)regular and compatible with a dimension function, and $\D^\otimes$ a complete presentable symmetric monoidal $\infty$-category. Then there is an equivalence of $\infty$-categories
    $$\alg_{\C^\op}^\tau(\S)\simeq \textup{CAlg}(\hsh_\tau(\C;\D)),$$
    where $\tau$ is the (coarse) topology generated by $P$.
\end{prop}
\begin{proof}
    By \cite[Proposition 2.12]{Glasman}, the equivalence 
    $$\textup{Fun}_{\fin_\part}(\fin_\part,\psh(\C;\D)^\otimes) \simeq \textup{Fun}_{\fin_\part}(\C^\otimes,\D^\otimes)$$
arising from the defining universal property of $\psh(\C;\D)^\otimes$, restricts to an equivalence between the category of commutative algebra objects $ \textup{CAlg}(\psh_\tau(\C;\D))$ and the category of $\infty$-operad maps $\Alg_{\C^\op}(\D)$.  
Under this equivalence, $\textup{CAlg}(\hsh_\tau(\C;\D) \subseteq \textup{CAlg}(\psh(\C;\D))$ is equivalent to the category of lax symmetric monoidal hypersheaves $\alg_{\C^\op}^\tau(\D)$.
\end{proof}
\begin{lem}[The comparison lemma for lax symmetric monoidal hypersheaves]\label{lem:comparison_spaces}
 Let $\C$ and $\C'$ be symmetric monoidal 1-categories equipped with cd-structures $P$ and $P'$ that satisfy the conditions of Lemma \ref{lem:pre_comparison}, and let $\D^\otimes$ be a complete, presentable symmetric monoidal $\infty$-category.  Let $f:\C \to \C'$ be a strong symmetric monoidal functor, such that the induced functor on sheaf categories
 $$f_!:\sh_{\tau}(\C;\set) \to \sh_{\tau'}(\C';\set) $$
 is an equivalence, where $\tau$ and $\tau'$ are the (coarse) topologies generated by $P$ and $P'$ respectively. Then $f$ induces an equivalence of categories 
 $$f_!:\alg^\tau_{\C^\op}(\D) \to \alg^{\tau'}_{(\C')^\op}(\D).$$
\end{lem}
\begin{proof}
 The assumptions imply that $f$ induces an equivalence of categories of hypersheaves $f_!:\hsh_{\tau}(\C;\D) \to \hsh_{\tau'}(\C';\D),$ see for example \cite[Remark 4.3 and Corollary 4.6]{kuij_descent}. Now consider the square
  \begin{center}
      \begin{tikzcd}
          \psh(\C;\D)^\otimes \arrow[r, "L^\otimes"] \arrow[d, "f_!"] & \hsh_{\tau}(\C;\D)^\otimes \arrow[d, "f_!"]\\
          \psh(\C';\D)^\otimes \arrow[r, "(L')^\otimes"] & \hsh_{\tau'}(\C';\D)^\otimes
      \end{tikzcd}
  \end{center}
where the vertical functors are strong symmetric monoidal by Lemma \ref{lem:pre_comparison}. This square commutes, since the square of right adjoints, given by inclusions and restrictions along $f$ commutes. The vertical functor $f_!$ on the left is strong symmetric monoidal, by \cite[Proposition 3.6]{BenMoshe2023}. Since $L$ is surjective, it follows that $f_!$ on the right is strong symmetric monoidal. Since the underlying functor of $\infty$-categories is an equivalence, it follows that $f_!$ induces an equivalence between categories of commutative algebra objects.
\end{proof}

    \begin{rmk}[Comparison to {\cite[Proposition 6.6]{cirici_horel}}]\label{rmk:more_comparison} 
In the case that the symmetric monoidal 1-categories $\C$ and $\C'$ are both cartesian, we expect Lemma \ref{lem:comparison_spaces} is true when $\tau$, $\tau'$ are arbitrary topologies (not necessarily generated by cd-structures). This follows from a generalisation of the proof of \cite[Proposition 6.6]{cirici_horel} by Cirici and Horel about ``strong symmetric monoidal hypersheaves''. Their proof uses Hinich' theory of strict localizations of symmetric monoidal $\infty$-categories \cite[Section 3.2]{hinich}. We expect that by the same arguments, using Hinich' notion of \textit{right symmetric monoidal localization} (\cite[Section 3.3]{hinich}) instead, one can obtain an equivalence of $\infty$-categories of lax symmetric monoidal hypersheaves too. However, these arguments rely on the fact that $\C$ and $\C'$ are \textit{cartesian} symmetric monoidal $\infty$-categories. This is not the case for the categories $\span(\C)$ and $\comp(\C)_0$, which is why we need Lemma \ref{lem:comparison_spaces}. 
\end{rmk}

\subsection{Reducing to $\comp(\C)$}\label{subsect:comp}
In this subsection we show that the category of local six-functor formalisms $\textup{6FF}^\loc$ is equivalent to a subcategory of symmetric monoidal hypersheaves on $\comp(\C)$. Moreover, in the special case of the Nagata setup of varieties over a field of characteristic zero, $\textup{6FF}^\loc$ embeds into the category of symmetric monoidal hypersheaves on the category of smooth and complete varieties $\smcomp$. 

Again, $(\C,I,P)$ is a noetherian Nagata setup.
\begin{prop}\label{prop:cohom}
 Let $\D$ be a pointed, complete, presentable symmetric monoidal $\infty$-category. Then there is an equivalence of $\infty$-categories
\begin{equation}\label{eq:compactly_supportes_sheaf_theories}
    \alg_{\span(\C)^\op}^{\tau^c_{A\cup L}}(\D)\simeq \alg_{\comp(\C)^\op}^{\tau_{AC}}(\D).
\end{equation}

For $(\C,I,P)$ the noetherian Nagata setup of Example \ref{ex:varieties}, over a field of characteristic zero, there is an equivalence of $\infty$-categories 
 \begin{equation}\label{eq:compactly_supportes_sheaf_theories_2}
    \alg_{\comp^\op}^{\tau_{AC}}(\D)\simeq \alg_{\smcomp^\op}^{\tau_{B}}(\D).   
    \end{equation}
\end{prop}
\begin{proof} This is the symmetric monoidal version of Proposition \ref{prop:descent_principle_general} and \cite[Lemma 7.4]{kuij_descent}.

It is easy to check that all the cd-structures involved, are $\otimes$-stable, and in Subsection \ref{subsect:span_and_comp} we showed that they are (c-)complete, (c-)regular and compatible with a dimension function. Therefore we can use Lemma \ref{lem:comparison_spaces} to show that 
   $$\alg_{\span(\C)^\op}^{\tau^c_{A\cup L}}(\D)\simeq \alg_{\comp(\C)_0^\op}^{\tau^c_{AC}}(\D),$$
    Similarly, in the special case of varieties over a field of characteristic zero,
 $$\alg_{\comp^\op}^{\tau_{AC}}(\D)\simeq \alg_{\smcomp^\op}^{\tau_{B}}(\D).$$

To see that 
$$\alg_{\comp(\C)_0^\op}^{\tau^c_{AC}}(\D) \simeq \alg_{\comp^\op}^{\tau_{AC}}(\D),$$
we note that the inclusion $i:\comp(\C) \to \comp(\C)_0$ is strong symmetric monoidal, and therefore by by \cite[Proposition 3.6]{BenMoshe2023} the functor  $$i_!:\fun(\comp(\C);\D) \to \fun(\comp(\C)_0,\D)$$
  given by left Kan extension along $i$ is strong symmetric monoidal  with respect to the Day convolution product on both sides. Now, by \cite[Proposition 5.2]{kuij_descent}, this functor restricts to a strong symmetric monoidal functor
    $$i_!:\fun(\comp(\C);\D)_\emptyset \to \fun(\comp(\C)_0,\D)_\emptyset$$
    that is an equivalence on the underlying $\infty$-categories.
   This implies an equivalence between the categories of commutative monoids in both categories, which are exactly the lax symmetric monoidal functors. One sees that this equivalence moreover restricts to an equivalence between the categories of functors that send abstract blowups to pullback squares, giving the desired equivalence between categories of lax symmetric monoidal hypersheaves.
\end{proof} 

 Proposition \ref{prop:6ff_to_span} shows that $\textup{\textbf{6FF}}^\loc$ is equivalent to a certain subcategory of the $\infty$-category $\alg_{\span(\C)^\op}^{\tau^c_{A\cup L}}(\PrSt)$. Now we will try to chase $\textup{\textbf{6FF}}^\loc$ through the equivalences (\ref{eq:compactly_supportes_sheaf_theories}), and (\ref{eq:compactly_supportes_sheaf_theories_2}) in the case of varieties over a field of characteristic zero, for $\D$ the category $\prst$ of presentable stable $\infty$-categories. We start by showing what $\textup{\textbf{6FF}}^\loc$ corresponds to under (\ref{eq:compactly_supportes_sheaf_theories}).
\begin{defn}\label{defn:bc_comp}
    Let $\alg_{\comp(\C)^\op}^{BC,AC}(\PrSt)\subseteq \alg_{\comp(\C)^\op}(\PrSt)$ denote the subcategory spanned by $\infty$-operad maps 
    $$F:\comp(\C)_\times^\op \to \PrSt^\otimes $$
    such that
    \begin{enumerate}[label=(\arabic*)]
        \item for every $p:X\to Y$ in $\comp(\C)$, the image $p^*:F(Y)\to F(X)$ is a left-left adjoint, and we denote the adjoints by $p^*\vdash p_\natural \vdash p^!$,
        \item squares of type $(P_-^\op, P_\times^\op)$ are sent to vertically right-adjoinable squares in $\PrSt^\otimes$,
        \item the underlying functor $\comp(\C)^\op \to \PrSt$ is a sheaf for $\tau_{AC}$,
    \end{enumerate}
    and by natural transformations $\alpha:F \to F'$ that are compatible with the right adjoints $p_\natural$.
\end{defn}

\begin{rmk}
    By Proposition \ref{prop:descent_principle_general}, a presheaf  $F:\comp(\C)^\op\to \PrSt$ is a $\tau_{AC}$-hypersheaf if and only if $F$ sends abstract blowup squares to pullbacks and $\emptyset$ to the zero object of $\PrSt$. Therefore we can rephrase the definition above as follows: $$\alg_{\comp(\C)^\op}^{BC,AC}(\PrSt)\subseteq \alg_{\comp(\C)^\op}(\PrSt)$$
   is the full subcategory spanned by $\infty$-operad maps 
    $F:\comp(\C)_{0,\times}^\op\to \PrSt^\otimes$ such that (1)-(2) hold, and
    \begin{enumerate}
      \item[(3')]  for the initial object $\emptyset$ we have $F(\emptyset) = \mathbbm{1}$, the trivial one-object $\infty$-category,
          \item[(4')] $F$ sends abstract blowup squares in $\comp(\C)$ to pullback squares,
    \end{enumerate}
      and by natural transformations $\alpha:F \to F'$ that are compatible with the right adjoints $p_\natural$. 
\end{rmk}

\begin{prop}\label{prop:cs6ff}
The equivalence
$\alg_{\span(\C)^\op}^{\tau^c_{A\cup L}}(\Prst)_\emptyset \simeq  \alg_{\comp(\C)^\op}^{\tau_{AC}}(\Prst)$ restricts to an equivalence 
\begin{equation}\label{eq:prop_cs6ff}
    \alg_{\span(\C)^\op}^{BC,A\cup L}(\PrSt)_\emptyset \xrightarrow{\sim} \alg_{\comp(\C)^\op}^{BC,AC}(\PrSt)
\end{equation}
\end{prop}

The proof of the proposition will follow from Lemma \ref{lem:cs6ff_step0} and Lemma \ref{lem:cs6ff_step_0.5}. It is clear that the restriction of (\ref{eq:compactly_supportes_sheaf_theories}) to $\alg_{\span(\C)^\op}^{BC,A\cup L}(\PrSt) $ lands in $\alg_{\comp(\C)^\op}^{BC,AC}(\PrSt)$, so we need to show that the functor (\ref{eq:prop_cs6ff}) is surjective.

\begin{lem}\label{lem:cs6ff_step0}
For $G$ in $\alg_{\span(\C)^\op}^{\tau^c_{A\cup L}}(\Prst)_\emptyset$, if $G|_{\comp(\C)_\times}$ is in $ \alg_{\comp(\C)^\op}^{BC,AC}(\PrSt)$, then
$G$ sends all $P$-morphisms to left-left adjoints, and squares of type $(P_-^\op,P_\times^\op)$ to vertically right-adjoinable squares.    
\end{lem}
\begin{proof}
We start with the following observation. For $U$ in $\C$ arbitrary, we consider a sequence 
    $$X\setminus U \xtwoheadrightarrow{j} X \xhookleftarrow{i} U$$
    where $U \xhookrightarrow{i}X$ is in $I$ and $X$ is in $\comp(\C)$ (and therefore $X \setminus U$ as well).
     Since $G$ is in $\alg_{\span(\C)^\op}^{\tau^c_{A\cup L}}(\Prst)_\emptyset$, we know that
    \begin{equation}\label{eq:fiber_seq_G}
         G(U)\xlongrightarrow{i_!} G(X)\xlongrightarrow{j^*} G(X\setminus U) 
    \end{equation}
    is a fiber sequence in $\Prst$, and therefore also a fiber sequence in $\cat^\ex$. Since the square 
    \begin{center}
        \begin{tikzcd}
            G(X) \arrow[d, "j^ *"] \arrow[r, "j^ *"] & G(X\setminus U) \arrow[d, "="]\\
            G(X\setminus U) \arrow[r, " =" ] & G(X\setminus U)
        \end{tikzcd}
    \end{center}
    is vertically right-adjoinable by the assumption on $G|_{\comp(\C)_\times}$, we have that $j^ *\circ j_\natural =\id$. This implies that $j_\natural$ is fully faithful. By \cite[Lemma A.2.5]{nine} we see that $i_!$ is fully faithful and has a right adjoint, which we denote by $i^\natural$. Moreover, the sequence (\ref{eq:fiber_seq_G}) is a Verdier sequence, and so is the sequence formed by the right adjoints
 $$G(X\setminus U) \xrightarrow{j_\natural} G(X) \xrightarrow{i^\natural} G(U).$$
 Since $j_\natural$ has a further right adjoint $j^!$ by the assumption on $G|_{\comp(\C)_\times}$, it follows again from \cite[Lemma A.2.5]{nine} that $i^\natural$ has a right adjoint $i_*$, and (\ref{eq:fiber_seq_G}) is in fact a left-recollement. 
 
\textbf{Morphisms in $P$.} Now we show that $G$ sends morphisms in $P$ to left-left adjoints. Let $q:U\twoheadrightarrow V$ be an arbitrary $P$-morphism in $\span(\C)$. Let $Y$ in $\comp(\C)$ such that there is an $I$-morphism $j:V\hookrightarrow Y$. By one of the basic properties of a Nagata setup, we can factor $U \xtwoheadrightarrow{q}V \xhookrightarrow{j} Y$ as an $I$-morphism followed by a $P$-morphism $$U \xhookrightarrow{i} X \xtwoheadrightarrow{p} Y,$$ where we may assume that $U$ is dense in $Y$. Then by \cite[Lemma 6.15]{kuij_descent} we see that the square 
    \begin{center}
        \begin{tikzcd}
            U \arrow[d, two heads, "q" ] \arrow[r, hook, "i" ] & X \arrow[d, two heads, "p"] \\
            V \arrow[r, hook, "j" ] & Y
        \end{tikzcd}
    \end{center}
    is a pullback square. Let $r:Y\setminus V \twoheadrightarrow X \setminus U$ be the induced morphisms on the complements. We can form a commuting diagram
\begin{equation}\label{eq:cs6ff_step_0_diag}
     \begin{tikzcd}
         G(V) \arrow[r, "j_!"] \arrow[d, "q^*"] & G(Y) \arrow[d, "p^*"] \arrow[r, "k^*"] & G(Y \setminus V) \arrow[d, "r^*"] \\
         G(U)\arrow[r, "i_!" '] & G(X) \arrow[r, "l^*" '] & G(X \setminus U)
     \end{tikzcd}
 \end{equation}
 in $\cat^\ex$, where the rows are left-recollements by the observations above. From the the assumption on $G|_{\comp(\C)_\times}$ the square on the right is vertically and horizontally right-adjoinable, and that the vertical morphisms in the square on the right are left-left adjoints. It follows from \cite[Theorem 2.5]{parshall_scott} that $q^*$ is a left-left adjoint.

\textbf{Squares of type $(P_-^\op,P_\times^\op)$.} We consider a square of type $(P_-^\op,P_\times^\op)$ in $\span(\C)^\op_\times$ (which we depict as a square in $\span(\C)_\times$)
\begin{equation*}
    \begin{tikzcd}
        (X_i)_I \arrow[r, two heads ] \arrow[d, two heads  ] & (Y_j)_J \arrow[d, two heads  ] \\
        (Z_i)_I \arrow[r, two heads] & (V_j)_J.
    \end{tikzcd}
\end{equation*}
where the horizontal morphisms lie over a partial map $\alpha:J \dashrightarrow I$. Let $(\overline{V}_j)_J$ be a compactification of $(V_j)_J$. 
Then we can factor each $Y_j \twoheadrightarrow V_j \hookrightarrow \overline{V_j}$ as a dense $I$-morphism followed by a $P$-morphism $Y_j \hookrightarrow \overline{Y_j} \twoheadrightarrow \overline{V_j}$. This gives a dense compactification $(\overline{Y_i})_J$ of $(Y_j)_J$ that admits a $P_-$-morphism $(\overline{Y_j})_J\twoheadrightarrow (\overline{V_j})_J$.

Similarly, for each $i$ we can factor $Z_i \rightarrow \prod_{\alpha^{-1}(i)} V_j \hookrightarrow \prod_{\alpha^{-1}(i)} \overline{V_j} $ as a dense $I$-morphism followed by a $P$-morphism 
$Z_i \hookrightarrow \overline{Z_i} \rightarrow \prod_{\alpha^{-1}(i)} \overline{V_j}  $. This gives a dense compactification $(\overline{Z_i})_I$ of $(Z_i)_I$ that admits a $P_\times$-morphism $(\overline{Z_i})_I \rightarrow (\overline{V_j})_J$. 

Now for each $i$ let $\overline{X_i}$ be the pullback
\begin{equation*}
\begin{tikzcd}
\overline{X_i} \arrow[r,  ] \arrow[d,  ] & \prod_{\alpha^{-1}(i)} \overline{Y_i} \arrow[d,  two heads] \\
\overline{Z_i} \arrow[r, two heads  ]           & \prod_{\alpha^{-1}(i)} \overline{V_i}  .                   
\end{tikzcd}
\end{equation*}
Then $(\overline{X_i})$ is the pullback of $(\overline{Y_j})_J\twoheadrightarrow (\overline{V_j})_J$ along $(\overline{Z_i})_I \twoheadrightarrow (\overline{V_j})_J$. In the cube 
\begin{equation*}
\begin{tikzcd}
 & (Y_j)_J \arrow[rr, hook] \arrow[dd,  ] &                                                                & (\overline{Y_j})_J \arrow[dd, two heads ] \\
(X_i)_I \arrow[rr] \arrow[dd, two heads  ] \arrow[ru, two heads ] &                                                & (\overline{X_i})_I \arrow[ru, two heads ] \arrow[dd, two heads ] &                                          \\
  & (V_j)_J \arrow[rr, hook]                       &                                                                & (\overline{V_j})_J                       \\
(Z_i)_I \arrow[rr, hook] \arrow[ru,two heads  ]                 &                                                & (\overline{Z_i})_I \arrow[ru, two heads ]                       &                                         
\end{tikzcd}
\end{equation*}
the left and right faces are pullbacks. By \cite[Lemma 6.15]{kuij_descent} the bottom and back faces are pullbacks, since $(\overline{Z_i})_I$ and $(\overline{Y_j})_J$ are dense compactifications of $(Z_i)_I$ and $(Y_j)_J$. By the pasting law for pullbacks, it follows that the front and top face are pullbacks as well. This implies that $(X_i)_I \to (\overline{X_i})_I$ is an $(I_-)$-morphism. Now by taking complements, we can extend the cube to the left by
\begin{equation}\label{eq:monster}
\begin{tikzcd}
& (Y_j)_J \arrow[rr, hook] \arrow[dd, two heads ] &                                                                & (\overline{Y_j})_J \arrow[dd, two heads  ] &                                                                                        & (\overline{Y_j}\setminus Y_j)_J \arrow[ll, tail, two heads  ] \arrow[dd, two heads  ] \\
(X_i)_I \arrow[rr,hook] \arrow[dd, two heads ] \arrow[ru, two heads ] &                                                & (\overline{X_i})_I \arrow[ru, two heads  ] \arrow[dd, two heads ] &                                          & (\overline{X_i}\setminus X_i)_I \arrow[ll, tail, two heads ]  \arrow[dd,  two heads] &                                                                             \\
& (V_j)_J \arrow[rr, hook]                       &                                                                & (\overline{V_j})_J                       &                                                                                        & (\overline{V_j}\setminus V_j)_J \arrow[ll, tail, two heads  ]                         \\
(Z_i)_I \arrow[rr, hook] \arrow[ru, two heads  ]                 &                                                & (\overline{Z_i})_I \arrow[ru,two heads  ]                       &                                          & (\overline{Z_i}\setminus Z_i)_I. \arrow[ll, tail, two heads  ]             &                                                                            
\end{tikzcd}
\end{equation}
where all the vertical squares are now pullbacks. The front and back squares on the right are sent to to vertically right-adjoinable squares in $\PrSt^\otimes$, since they are squares of type $(P_-^\op, P_\times^\op)$ in $(\comp_0)_\times$, and the rows are sent to left-recollements. Recall that by definition of a vertically right-adjoinable square in $\cat^\otimes$, this just means that for every $i\in I$, the induced square
\begin{center}
    \begin{tikzcd}
        G(\overline{Z_i}) \arrow[d] \arrow[r] & G(\overline{Z_i}\setminus Z_i) \arrow[d] \\
        G(\overline{X_i}) \arrow[r] & G(\overline{X_i}\setminus X_i)
    \end{tikzcd}
\end{center}
is vertically right-adjoinable square in $\cat$; and similar for the back right square in (\ref{eq:monster}). By Lemma \ref{lem:parshall_scott}, this implies that the front and back square on the left are sent to vertically right-adjoinable squares. The vertical square in the middle is sent to a vertically right-adjoinable square in $\PrSt^\otimes$ as well, since it is a pullback square in $\comp(\C)_\times$. This means that for all $i$ in $I$, the square 
\begin{center}
    \begin{tikzcd}
       \prod_{\alpha^{-1}(i)} G(\overline{V_j})\arrow[d] \arrow[r] & G(\overline{Z_i}) \arrow[d] \\
       \prod_{\alpha^{-1}(i)} G(\overline{Y_j}) \arrow[r]& G(\overline{X_i})
    \end{tikzcd}
\end{center}
is vertically right-adjoinable in $\cat$. By the first part of this proof, the horizontal maps on the left in (\ref{eq:monster}) are sent to fully faithful embeddings. Note that products of fully faithful embeddings are fully faithful embeddings, and products of adjoinable squares are adjoinable. It follows from Lemma \ref{lem:another_cube} that the vertical square on the left is sent to a vertically right-adjoinable square in $\Prcat^\otimes$, as desired.
\end{proof}

\begin{lem}\label{lem:cs6ff_step_0.5}
 For $G$ in $\alg_{\span(\C)^\op}^{\tau^c_{A\cup L}}(\PrSt)_\emptyset$, if $G|_{\comp_\times}$ is in $ \alg_{\comp(\C)^\op}^{BC,AC}(\PrSt)$, then $G$ sends morphisms in $I$ to left-left adjoints and squares of types $(P_-^\op, I_-)$, $(I_-, P_\times^\op)$ and $(I_-, I_-)$ to vertically right-adjoinable squares.
\end{lem}

\begin{proof}     
\textbf{Morphisms in $I$.} Let $i:U\hookrightarrow V$ be in $I$. Then by assumption, $G$ sends the sequence 
\begin{center}
    \begin{tikzcd}
        V\setminus U \arrow[r, two heads, tail, "j"]& V & U \arrow[l, hook', "i"']
    \end{tikzcd}
\end{center}
to a fiber sequence 
$$G(U) \xrightarrow{i_!} G(V) \xrightarrow{j^*} G(V \setminus U).$$
Now by Lemma \ref{lem:cs6ff_step0}, we know that $j^*$ has a right adjoint $j_\natural$, and that $G$ sends the pullback square
\begin{center}
    \begin{tikzcd}
        V \setminus U \arrow[d, "= "] \arrow[r, "="] & V \setminus U \arrow[d, tail, two heads, "j"] \\
        V \setminus U \arrow[r, tail, two heads, "j"] & V
    \end{tikzcd}
\end{center}
to a vertically right-adjoinable square, showing that $j_\natural$ is fully faithful. By \cite[Lemma A.2.5]{nine}, it follows that the fiber sequence is a Verdier sequence and $i_!$ has a right adjoint $i^\natural$ which is full and essentially surjective. Moreover, by Lemma \ref{lem:cs6ff_step0}, $j_\natural$ has a further right adjoint $j^!$. Again, \cite[Lemma A.2.5]{nine} implies that $i^\natural$ has a fully faithful right adjoint, and therefore $i_!$ is a left-left adjoint.

\textbf{Squares of type $(I_-,P_\times^\op)$.}
First, we show that squares of type $(I_-,P^\op_-)$ are sent to vertically right-adjoinable squares. Indeed, suppose that 
\begin{equation*}
\begin{tikzcd}
(X_i)_I \arrow[r, two heads ,"p" ]                 & (Y_i)_I                 \\
(U_i)_I \arrow[u, hook, "i"] \arrow[r, two heads, "q" ] & (V_i)_I \arrow[u, hook, "j"]
\end{tikzcd}
\end{equation*}
is such a square. Now consider the diagram
\begin{equation*}
\begin{tikzcd}
(U_i)_I \arrow[r, hook,"i"] \arrow[d,  two heads, "q"] & (X_i)_I \arrow[d,two heads,"p"  ] & (X_i\setminus U_i)_I \arrow[l,tail, two heads, "e" ] \arrow[d, "r" ]  \\
(V_i)_I \arrow[r, hook,"j"]                      & (Y_i)_I                      & (Y_i\setminus V_i)_I \arrow[l,tail,two heads, two heads, "f"  ]                     
\end{tikzcd}
\end{equation*}
which is sent by $G$ to 
\begin{center}
    \begin{tikzcd}
        G((V_i)_I)\arrow[r, "j_!" ] \arrow[d, " q^*"] & G((Y_i)_I) \arrow[r, "f^*"] \arrow[d, "p^*"]& G((Y_i\setminus V_i)_i) \arrow[d, "r^*"] \\
        G((U_i)_I)\arrow[r, "i_!" ] & G((X_i)_I) \arrow[r, "e^*"] & G((X_i\setminus U_i)_I). 
    \end{tikzcd}
\end{center}
By Lemma \ref{lem:cs6ff_step0} it follows that the square on the right is sent to a horizontally right-adjoinable square. Therefore by Lemma \ref{lem:parshall_scott}, the square on the left is sent to a horizontally right adjoinable square, as desired.

Now let  
\begin{equation*}
\begin{tikzcd}
(X_i)_I \arrow[r, two heads ,"p" ]                 & (Y_j)_J                 \\
(U_i)_I \arrow[u, hook, "i"] \arrow[r, two heads, "q" ] & (V_j)_J \arrow[u, hook, "j"]
\end{tikzcd}
\end{equation*}
be a square of type $(I_-,P_\times^\op)$ , where the horizontal morphisms lie over a partial map $\alpha:J \dashrightarrow I$. Note that this square can be written as a composition
\begin{equation*}
    \begin{tikzcd}
(X_i)_I \arrow[r, two heads ,"p" ]                 & (\prod_{\alpha^{-1}(i)} Y_j)_I     \arrow[r] & (Y_j)_J             \\
(U_i)_I \arrow[u, hook, "i"] \arrow[r, two heads, "q" ] & (\prod_{\alpha^{-1}(i)} V_j)_I \arrow[u, hook, "j"] \arrow[r] & (V_j)_J. \arrow[u, hook, "j"]
\end{tikzcd}
\end{equation*}
We know that the square on the left is sent to a vertically right-adjoinable square, since it is of type $(I_-,P_-^\op)$, so we focus on the square on the right. We need to show that for all $i \in I$, the square
\begin{center}
    \begin{tikzcd}
        \prod_{\alpha^{-1}(i)}G(V_j) \arrow[r, "\boxtimes"] \arrow[d, "j_!"] & G(\prod_{\alpha^{-1}(i)} V_j) \arrow[d, "j_!"] \\
         \prod_{\alpha^{-1}(i)}G(Y_j) \arrow[r, "\boxtimes"] &G(\prod_{\alpha^{-1}(i)} Y_j) 
    \end{tikzcd}
\end{center}
is vertically right-adjoinable. For each $j\in \alpha^{-1}(i)$, let $Y_j \hookrightarrow\overline{Y_j}$ be a compactification; then $\overline{Y_j}$ is also a compactification of $V_j$. We can form the cube
\begin{center}
\begin{tikzcd}
   & G(\prod V_j) \arrow[rr] \arrow[dd, "j_!"', near start] &                                                            & G(\prod \overline{Y_j}) \arrow[dd, equal] \\
\prod G(V_j) \arrow[ru, "\boxtimes"] \arrow[rr, crossing over] \arrow[dd, "j_!"', near start] &                                                   & \prod G(\overline{Y_j}) \arrow[ru, "\boxtimes"]  &                                \\
      & G(\prod Y_j) \arrow[rr]                         &                                                            & G(\prod \overline{Y_j})            \\
\prod G(Y_j) \arrow[ru, "\boxtimes"] \arrow[rr]                         &                                                   & \prod G(\overline{Y_j}) \arrow[ru, "\boxtimes"] \arrow[uu, equal, crossing over]           &                               
\end{tikzcd}
\end{center}
where we denote the arrows going into the paper by $\boxtimes$ since they can be though of as the external tensor product. We $G$ sends $I_-$-morphisms to fully faithful embeddings, whose right adjoint is full and essentially surjective. 
Therefore by Lemma \ref{lem:cube}, we know that the face on the left is vertically right-adjoinable if the top and bottom faces, i.e., the squares
\begin{center}
    \begin{tikzcd}
        \prod G(V_j) \arrow[r, "\boxtimes"] \arrow[d] & G(\prod V_j) \arrow[d]\\
        \prod G(\overline{Y_j}) \arrow[r, "\boxtimes"] & G(\prod \overline{Y_j})
    \end{tikzcd}
    and 
    \begin{tikzcd}
           \prod G(\prod Y_j) \arrow[r, "\boxtimes"] \arrow[d,] & G(\prod Y_j) \arrow[d]\\
        \prod G(\overline{Y_j}) \arrow[r, "\boxtimes"] & G(\prod \overline{Y_j})  
    \end{tikzcd}
\end{center}
are vertically right-adjoinable. For both of the squares, this will follow from the following argument. Let $W_1,\dots, W_n$ be a finite set of objects in $\C$, and for each $1\leq k \leq n$, let $W_k\hookrightarrow \overline{W_k}$ be some compactification. We want to show that
\begin{center}
       \begin{tikzcd}
            G(W_1)\times \dots \times G(W_n) \arrow[r, "\boxtimes"] \arrow[d] & G(W_1\times \dots \times W_n) \arrow[d]\\
       G(\overline{W_1})\times \dots \times G(\overline{W_n}) \arrow[r, "\boxtimes"] & G(\overline{W_1}\times \dots \times \overline{W_n})  
    \end{tikzcd}
\end{center}
is vertically right-adjoinable. We can decompose this square as
\begin{center}
       \begin{tikzcd}
            G(W_1)\times \dots \times G(W_n) \arrow[r, "\boxtimes"] \arrow[d] & G(W_1 \times \dots\times W_n) \arrow[d]\\
       G(\overline{W_1})\times G(W_2) \times \dots \times G(W_n) \arrow[d] \arrow[r, "\boxtimes"] & G(\overline{W_1}\times  W_2 \times \dots \times W_n) \arrow[d] \\
       \dots \arrow[d, ] & \dots \arrow[d, ] \\
              G(\overline{W_1})\times \dots \times G(\overline{W_n}) \arrow[r, "\boxtimes"] & G(\overline{W_1}\times \dots \times\overline{W_n})
    \end{tikzcd}
\end{center}
i.e, only replacing one more $W_k$ by its compactification in each row. Now it suffices that each of the squares in this vertically right-adjoinable. We do this for the topmost square, the argument is the same for all other squares. Consider the diagram 
\begin{center}
    \begin{tikzcd}
        W_1\times \dots \times W_n \arrow[d,two heads] \arrow[r, hook] & \overline{W_1}\times W_2 \times \dots \times W_n \arrow[d, two heads] & (\overline{W_1}\setminus W_1) \times W_2 \times \dots \times W_n \arrow[d, two heads] \arrow[l, tail,two heads]\\
         (W_1,\dots , W_n)  \arrow[r, hook] & (\overline{W_1}, W_2 , \dots , W_n ) & (\overline{W_1}\setminus W_1,W_2 , \dots , W_n )\arrow[l, tail, two heads]
    \end{tikzcd}
\end{center}
in $\span(\C)_\times$. Then $G$ sends the rows to recollements and the square on the right to a horizontally right-adjoinable square, since it is of type $(P_\times^\op, P_-^\op)$. It follows that the square on the left is sent to a horizontally right-adjoinable square, as desired.

\textbf{Squares of type $(P_-^\op, I_-)$.} Let
\begin{center}
    \begin{tikzcd}
        (U_i)_I \arrow[r, hook] \arrow[d,two heads  ] & (X_i)_I \arrow[d, two heads  ] \\
        (V_i)_I \arrow[r, hook] & (Y_i)_I
    \end{tikzcd}
\end{center}
be a square of type $(P_-^\op, I_-)$. As above, we can extend this with a pullback square of type $(P_-^\op,P_-^\op )$ as follows.
\begin{equation*}
\begin{tikzcd}
(U_i)_I \arrow[r, hook] \arrow[d, two heads ] & (X_i)_I \arrow[d, two heads  ] & (X_i\setminus U_i)_I \arrow[l,tail, two heads  ] \arrow[d, two heads ] \\
(V_i)_I \arrow[r, hook]                      & (Y_i)_I                      & (Y_i\setminus V_i)_I. \arrow[l, tail, two heads ]                     
\end{tikzcd}
\end{equation*}
The rows in the diagram are sent to left-recollements, and by Lemma \ref{lem:cs6ff_step0} the square on the right is sent to a vertically right-adjoinable square. Lemma \ref{lem:parshall_scott} implies that image of the square of the left is  vertically right-adjoinable.

\textbf{Squares of type $(I_-,I_-)$.} Lastly we consider a pullback square
\begin{equation*}
\begin{tikzcd}
(X_i)_I                 & (V_i)_I \arrow[l, hook']                 \\
(U_i)_I \arrow[u, hook] & (W_i)_I \arrow[u, hook] \arrow[l, hook']
\end{tikzcd}
\end{equation*}
of type $(I_-,I_-)$ in $\span(\C)_\times^\op$. We can extend this with another a pullback square of type $(I_-,P_-)$: 
\begin{equation*}
\begin{tikzcd}
(V_i)_I \arrow[r, hook]                 & (X_i)_I                 & (X_i\setminus V_i)_I \arrow[l, tail, two heads ]                 \\
(W_i)_I \arrow[u, hook] \arrow[r, hook] & (U_i)_I \arrow[u, hook] & (U_i\setminus W_i)_I \arrow[l,tail, two heads  ] \arrow[u, hook]
\end{tikzcd}
\end{equation*}
The rows are sent to left-recollements, and by an earlier step, the square on the right is sent to a vertically right-adjoinable square square. By Lemma \ref{lem:parshall_scott}, this implies that the square on the left is sent to a vertically right-adjoinable square. 
\end{proof}

\begin{proof}[Proof of Proposition \ref{prop:cs6ff}]
Lemma \ref{lem:cs6ff_step0} and Lemma \ref{lem:cs6ff_step_0.5} provide most of the proof; together they show that for $G$ in $\alg^{\tau^c_{A\cup L}}_{\span(\C)^\op}(\PrSt)_\emptyset $, if the restriction $G|_{\comp(\C)_\times}$ is in $\alg^{BC,AC}_{\comp(\C)^\op}(\PrSt)$, then $G$ is in $\alg^{BC,A\cup L}_{\span(\C)^\op}(\PrSt)_\emptyset $. What we still need to show is: for a natural transformation $\alpha:G\to G'$ in $\alg^{\tau^c_{A\cup L}}_{\span(\C)^\op}(\PrSt)_\emptyset $, if $\alpha|_{\comp(\C)_\times}$ is in $\alg^{BC,AC}_{\comp(\C)^\op}(\PrSt)$, then $\alpha$ is in $\alg^{BC,A\cup L}_{\span(\C)^\op}(\PrSt)_\emptyset $, which means it has to be compatible with the adjoints $p_\natural$ and $i^\natural$. 

Let $\alpha:G\to G'$ be such a natural transformation. For $q:U \twoheadrightarrow V$ a $P$-morphism in $\span(\C)$, as in the proof of Lemma \ref{lem:cs6ff_step0} we find a pullback square
\begin{center}
    \begin{tikzcd}
        U \arrow[r,hook, "i " ] \arrow[d, two heads, "q"] & X \arrow[d, two heads, "p"] \\
        V \arrow[r, hook, "j"] & Y
    \end{tikzcd}
\end{center}
of $I$- and $P$-morphisms, with $X$ and $Y$ in $\comp(\C)$. Applying the natural transformation $\alpha$, we obtain a commutative diagram 
\begin{center}
\begin{tikzcd}
G(V) \arrow[dd] \arrow[rd, "\alpha_V"] \arrow[rr] &                             & G(Y) \arrow[rd, "\alpha_Y"] \arrow[rr] \arrow[dd] &                             & G(Y\setminus V) \arrow[rd, "\alpha_{Y \setminus V}"] \arrow[dd] &                             \\
                                      & G'(V) \arrow[dd] \arrow[rr] &                                       & G'(Y) \arrow[dd] \arrow[rr] &                                       & G'(Y\setminus V) \arrow[dd] \\
G(U) \arrow[rd, "\alpha_U"] \arrow[rr]            &                             & G(X) \arrow[rd, "\alpha_X"] \arrow[rr]            &                             & G(X\setminus U) \arrow[rd, "\alpha_{X \setminus U}"]            &                             \\
                                      & G'(U) \arrow[rr]            &                                       & G'(X) \arrow[rr]            &                                       & G'(X\setminus U)           
\end{tikzcd}
\end{center}
where the rows are left-recollements. By the condition on $\alpha|_{\comp_\times}$, in the cube on the right, the vertical faces are vertically right-adjoinable. Then the front and back face of the cube on the left are vertically right-adjoinable by Lemma \ref{lem:parshall_scott}, and the leftmost face is vertically right-adjoinable by Lemma \ref{lem:another_cube}.

Now let $i:U\hookleftarrow V$ be an $I$-morphism in $\span(\C)$. We can form the commutative diagram 
\begin{center}
\begin{tikzcd}
G(U) \arrow[d, "\alpha_U"'] \arrow[r] & G(V) \arrow[d, "\alpha_V"] \arrow[r] & G(V\setminus U) \arrow[d, "\alpha_{V\setminus U}"] \\
G'(U) \arrow[r]                       & G'(V) \arrow[r]                      & G'(V\setminus U)                                  
\end{tikzcd}
\end{center}
where the rows are left-recollements.
Since $V\setminus U \twoheadrightarrow V$ is proper, the square on the left is horizontally right-adjoinable. By Lemma \ref{lem:parshall_scott}, the square on the right is horizontally right-adjoinable, as desired. 
\end{proof}

\begin{thm}\label{thm:cs6ff}
    There is an equivalence of $\infty$-categories
    $$\textup{\textbf{6FF}}^\loc \simeq \alg^{BC,AC}_{\comp(\C)^\op}(\PrSt)$$
\end{thm}
\begin{proof}
    Combine Proposition \ref{prop:6ff_to_span} and Proposition \ref{prop:cs6ff}.
\end{proof}

Now that we have chased $\textup{\textbf{6FF}}^\loc$ through the equivalence (\ref{eq:compactly_supportes_sheaf_theories}), one wonders if, for the noetherian Nagata setup of varieties over a field of characteristic zero, we can give a similar description of the corresponding subcategory of $\alg_{\smcomp^\op}^{\tau_B}(\Prst)$ by chasing $\textup{\textbf{6FF}}^\loc$ further through the equivalence (\ref{eq:compactly_supportes_sheaf_theories_2}). This proves to be difficult, the existence of this equivalence guarantees the following.

\begin{thm}\label{thm:6FF}
For $(\var,I,P)$ the noetherian Nagata setup of Example \ref{ex:varieties}, where $k$ is of characteristic zero, restricting the $(-)^*$-part of a six functors formalism to $\smcomp$ gives a faithful embedding
$$\textup{\textbf{6FF}}^\loc \hookrightarrow \alg_{\smcomp^\op}^{\tau_B}(\Prst).$$ 
\end{thm}

\begin{rmk}\label{rmk:smcomp}
We observe that for any functor $F:\smcomp_\times^\op \to \PrSt^\otimes $ in the image of this embedding,
we have 
\begin{enumerate}[label=(\arabic*)]
    \item for $f:X\to Y$ in $\smcomp$, the image $f^*:F(Y)\to F(X)$ is a left-left adjoint
    \item squares of type $(P_-^\op, P_\times^\op)$ are sent to vertically right-adjoinable squares.
\end{enumerate}
Moreover, a natural transformation in the image of this embedding will be compatible with the right adjoints $p_\natural$. However, we do not expect that there is an equivalence from $\textup{\textbf{6FF}}^\loc$ onto the category of lax symmetric monoidal hypersheaves such that the conditions (1) and (2) above hold, and natural transformations compatible with the right adjoints. One reason for this suspicion is that the 1-category $\smcomp$ does not have a lot of pullbacks, so if $F:\smcomp^\op \to \PrSt^\otimes $ sends all  squares of type $(P_-^\op, P_\times^\op)$ to vertically right-adjoinable squares, then this might not be true for the extension of $F$ to $\comp$ which has a lot more pullbacks. 
\end{rmk}

\section*{Conflicts of interest}
There are no conflicts of interest.
\section*{Financial support}
This work was supported by the ERC grant ERC-2017-STG
759082 through Dan Petersen.

\bibliographystyle{alpha}
\bibliography{bibliography}

\end{document}